\def \N {\mathbb{N}}
\def \R {\mathbb{R}}
\def \de {\partial}
\def \e {\varepsilon}
\def \LL {\mathcal{L}_{\varepsilon}}
\def \Oo {\mathcal{O}}
\theoremstyle{definition}
\newtheorem{definition}{Definition}[section]
\newtheorem{remark}[definition]{Remark}
\theoremstyle{plain}
\newtheorem{theorem}[definition]{Theorem}
\newtheorem{proposition}[definition]{Proposition}
\newtheorem{lemma}[definition]{Lemma}
\newtheorem{corollary}[definition]{Corollary}
\numberwithin{equation}{section}
\begin{document}

 \title[Mixed local-nonlocal concave-convex critical problems]
 {On the existence of a second positive solution to mixed local-nonlocal concave-convex critical problems}

 \author[S.\,Biagi]{Stefano Biagi}
 \author[E.\,Vecchi]{Eugenio Vecchi}
 
 \address[S.\,Biagi]{Dipartimento di Matematica
 \newline\indent Politecnico di Milano \newline\indent
 Via Bonardi 9, 20133 Milano, Italy}
 \email{stefano.biagi@polimi.it}
 
 \address[E.\,Vecchi]{Dipartimento di Matematica
 \newline\indent Universit\`a di Bologna \newline\indent
 Piazza di Porta San Donato 5, 40126 Bologna , Italy}
 \email{eugenio.vecchi2@unibo.it}

\subjclass[2020]{35J75, 35M12, 35B33}

\keywords{Mixed local-nonlocal operators, critical problems}

\thanks{The Authors are
member of the {\em Gruppo Nazionale per
l'Analisi Ma\-te\-ma\-ti\-ca, la Probabilit\`a e le loro Applicazioni}
(GNAMPA) of the {\em Istituto Nazionale di Alta Matematica} (INdAM), and are
partially 
supported by the PRIN 2022 project 2022R537CS \emph{$NO^3$ - Nodal Optimization, NOnlinear elliptic equations, NOnlocal geometric problems, with a focus on regularity}, founded by the European Union - Next Generation EU}

 \begin{abstract}
 We prove the existence of a second positive weak solution for mixed local-nonlocal critical semilinear elliptic problems  with a sublinear perturbation in the spirit of \cite{ABC}.
 \end{abstract}
 \maketitle 
\section{Introduction}\label{sec.Intro}
Let $\Omega \subset \mathbb{R}^n$ be an open and bounded set with smooth enough boundary $\partial \Omega$. We consider the following mixed local-nonlocal perturbed critical semilinear elliptic problem:
\begin{equation}\tag{{$\mathrm{P}_{\varepsilon}$}}\label{eq:Problem}
\left\{ \begin{array}{rl}
\LL u  = \lambda \,u^{p} + u^{2^{\ast}-1} & \textrm{in } \Omega,\\
u>0 & \textrm{in } \Omega,\\
u= 0 & \textrm{in } \mathbb{R}^{n}\setminus \Omega,
\end{array}\right.
\end{equation}
 \noindent where $\lambda >0$ is a positive real parameter, $p \in (0,1)$ and
 $$\LL  := -\Delta  +\varepsilon\, (-\Delta)^s, \qquad s\in (0,1), \quad \varepsilon \in (0,1].$$ 
  Along the paper it will sometimes be useful to denote the above problem as \eqref{eq:Problem}$_\lambda$ 
   to make it clear the choice of the parameter.
Here, $(-\Delta)^s$ with $s\in (0,1)$ denotes the fractional Laplacian which acts in smooth enough functions as
\begin{equation*}
 (-\Delta)^s u(x) = 2\,\mathrm{P.V.}\int_{\R^n}\frac{u(x)-u(y)}{|x-y|^{n+2s}}\,dy = 2\,\lim_{\delta\to 0^+}\int_{\{|x-y|\geq\delta\}}\frac{u(x)-u(y)}{|x-y|^{n+2s}}\,dy.
\end{equation*}
We neglect the normalization constant $C_{n,s} > 0$ usually appearing in front of the integral because we are not interested in asymptotics as $s \to 1^{-}$.

\medskip

The above boundary value problem falls in the framework of the so called {\it concave-convex problems} whose model and most famous example is the one considered in the paper \cite{ABC}:
\begin{equation}\tag{ABC}\label{eq:ABC}
\left\{ \begin{array}{rl}
-\Delta u  = \lambda \,u^{p} + u^{2^{\ast}-1} & \textrm{in } \Omega,\\
u>0 & \textrm{in } \Omega,\\
u= 0 & \textrm{on } \partial \Omega.
\end{array}\right.
\end{equation}
Without any aim of completeness, we  mention that \eqref{eq:ABC} has been generalized to various directions, either allowing for different operators either considering different boundary conditions, see e.g. \cite{AzMaPe, CoPe, CCP, BESS}.
\medskip

Before entering into the details related to \eqref{eq:Problem}$_{\lambda}$, let us spend a few more words on $\mathcal{L}_{\varepsilon}$.
The operator $\mathcal{L}_{1}$ (i.e. with $\varepsilon=1$) is a special instance of a wide more general class of operators whose study began in the '60s, see \cite{BCP} and \cite{Cancelier} for generalizations, in connection with the validity of a maximum principle. On the other hand, the operator $\mathcal{L}_{1}$ can be seen as the infinitesimal generator of a stochastic process obtained as a superposition of a Brownian motion and a L\'{e}vy flight, and hence there is a vast literature which establishes several regularity properties 
\emph{adopting probabilistic techniques}, see e.g. \cite{CKSV} and the references therein. \\
\noindent More recently, the study of regularity properties related to this operator (and its quasilinear generalizations) has seen an increasing interest, mainly adopting more analytical and PDEs approaches, see, e.g., 
\cite{BDVV, CDV22, DeFMin, DPLV2, GarainKinnunen, GarainLindgren, SVWZ, ArRa, SVWZ2}. 
It is worth mentioning that the operator $\LL$ (usually with $\varepsilon=1$) seems to be of interest in biological applications, see, e.g. \cite{DV} and the references therein.\\
\noindent The operator $\mathcal{L}_{1}$ has a also variational nature and it is associated to the energy 
\begin{equation*}
E(u):= \int_{\Omega}\dfrac{|\nabla u|^2}{2}\, dx + \dfrac{1}{2}\, \iint_{\mathbb{R}^{2n}}\dfrac{|u(x)-u(y)|^2}{|x-y|^{n+2s}}\, dx \, dy.
\end{equation*}
\noindent defined on a suitable space functions for which we refer to Section \ref{sec.Prel}.
It is clear that 
\begin{equation*}
E(\lambda u) = \lambda^2 E(u), \quad \textrm{for every } \lambda \in \mathbb{R},
\end{equation*}
\noindent but there is a lack of scaling invariance, namely,
\begin{equation*}
E(u_t) = \dfrac{1}{2}\int_{\Omega} |\nabla u|^2 + \dfrac{t^{2s-2}}{2} \iint_{\mathbb{R}^{2n}}\dfrac{|u(x)-u(y)|^2}{|x-y|^{n+2s}}\,dx \,dy \quad \textrm{for every } t>0,
\end{equation*}
 where $u_{t}(x):= t^{(n-2)/2}u(x)$, which typically preserves the $L^2$-gradient norm. 
 
 Among other aspects, this phenomenon is responsible of the following fact: the best Sobolev constant in the natural mixed Sobolev inequality is never achieved and it coincides with the one coming from the purely local one. We notice that something similar happens when dealing with mixed Hardy-type inequalities, see \cite{BEMV}. An interesting consequence of the absence of mixed Aubin-Talenti functions naturally affects PDEs with critical term. Indeed, when dealing with critical variational problems for $-\Delta$, it is well known that a major role is played by the best Sobolev constant, for example being a threshold of validity of Palais-Smale condition, and this is usually achieved by testing the critical equation with the Aubin-Talenti functions. An analogous procedure can be followed in the mixed setting as well, but the lack of invariance previously mentioned can create troubles, see e.g \cite{BDVV5, BV2, daSiFi}. We stress that something similar may happen with non-homogeneous operators like the $(p,q)$-Laplacian, see \cite{KRS}.

 Following the approach recently used in \cite{BV2} for the case of mild singular and critical mixed problems, we are now interested in the following classical problem:
\begin{center}
{\it Find values of the parameter $\lambda$ for which \eqref{eq:Problem}$_{\lambda}$ admits one or more positive weak solutions.}
\end{center}
We refer to Definition \ref{def:weaksol} for the precise definition of \emph{weak solution} of \eqref{eq:Problem}$_{\lambda}$.
Differently from \cite{BDVV5} and \cite{BV2}, here we are in presence of a sublinear perturbation ($\lambda \, u^{p}$ with $p\in (0,1)$) of the critical term ($u^{2^{\ast}-1}$).

\medskip

For $\varepsilon=1$, a first step towards an answer has been recently made in \cite[Theorem 1.2]{AMT} and \cite[Theorem 1.1]{MS} where the following has been proved:
\begin{theorem} \label{thm:ALTRI}
 Let $\Omega\subset\R^n$ \emph{(}with $n\geq 3$\emph{)} be a bounded open set
 with smooth enough boundary, and let $p \in (0,1)$. Then, there exists $\Lambda > 0$
 such that
 \begin{itemize}
 \item[a)]  problem $(\mathrm{P}_{1})_{\lambda}$ admits at least one weak solution for every $0<\lambda \leq \Lambda$;
  \item[b)]
    problem $(\mathrm{P}_{1})_{\lambda}$ does not admit weak solutions
  for every $\lambda>\Lambda$.
 \end{itemize}
 Moreover, for $\lambda \in (0,\Lambda)$, the solution is minimal and increasing w.r.t. to $\lambda$.
\end{theorem}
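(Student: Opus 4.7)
The plan is to follow the classical approach for concave-convex problems of Ambrosetti-Brezis-Cerami type, adapted to the mixed local-nonlocal operator $\mathcal{L}_1$, relying heavily on the method of sub- and super-solutions and on the weak comparison principle available for $\mathcal{L}_1$.

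I would first define
\[
\Lambda := \sup\bigl\{\lambda>0 : (\mathrm{P}_{1})_{\lambda} \text{ admits a positive weak solution}\bigr\}
\]
and then prove that $0<\Lambda<\infty$. To show $\Lambda>0$, I would build a solution for small $\lambda$ via sub- and super-solutions. A natural sub-solution is $\underline{u}_\lambda=t\varphi_1$ where $\varphi_1>0$ is the first eigenfunction of $\mathcal{L}_1$ with Dirichlet datum outside $\Omega$, and $t>0$ is small (here one uses $p<1$, so the sublinear term dominates near zero). For a super-solution I would solve the linear torsion-type problem $\mathcal{L}_1 w = 1$ in $\Omega$, $w=0$ in $\mathbb{R}^n\setminus\Omega$, and check that $M w$ is a super-solution for $M$ large enough whenever $\lambda$ is small enough, using the $L^\infty$-bound on $w$. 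Monotone iteration between $\underline{u}_\lambda$ and $M w$ then produces a classical (weak) positive solution.

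To show $\Lambda<\infty$ I would test the equation against $\varphi_1$: if $u$ is a positive solution of $(\mathrm{P}_{1})_{\lambda}$, then
\[
\lambda_1 \int_\Omega u\,\varphi_1\,dx = \int_\Omega \bigl(\lambda u^p + u^{2^*-1}\bigr)\varphi_1\,dx,
\]
and since the map $\tau\mapsto \lambda \tau^p + \tau^{2^*-1}$ satisfies $\lambda \tau^p + \tau^{2^*-1}\ge c(\lambda)\tau$ with $c(\lambda)\to+\infty$ as $\lambda\to+\infty$ (minimize in $\tau$ using $p\in(0,1)$ and $2^*-1>1$), we obtain $\lambda_1 \ge c(\lambda)$, hence $\lambda$ cannot be arbitrarily large. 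For any $\lambda\in(0,\Lambda)$ one picks $\mu\in(\lambda,\Lambda)$ with a solution $u_\mu$; since $\lambda u_\mu^p+u_\mu^{2^*-1}<\mu u_\mu^p+u_\mu^{2^*-1}$, $u_\mu$ is a strict super-solution for $(\mathrm{P}_{1})_{\lambda}$, and pairing it with the small sub-solution $t\varphi_1$ yields existence. Iterating from zero and using the maximum principle for $\mathcal{L}_1$ gives the minimal solution $\underline{u}_\lambda$, and the strict monotonicity in $\lambda$ follows by comparing the iterative schemes for two values $\lambda_1<\lambda_2$.

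The delicate step is the endpoint case $\lambda=\Lambda$. Here I would take an increasing sequence $\lambda_k\nearrow \Lambda$ and consider the minimal solutions $u_{\lambda_k}$, which form an increasing family by the previous step. A priori estimates have to be extracted from the energy/testing of the equation itself, bypassing the Palais-Smale issues due to the critical exponent: testing with $u_{\lambda_k}$ combined with the sublinear growth on one side and a bound on the Morse-index/linearization on the other (minimal solutions are semistable, hence the first eigenvalue of the linearized operator $\mathcal{L}_1-\lambda p u^{p-1}-(2^*-1)u^{2^*-2}$ is non-negative) provides a uniform bound in the natural Sobolev-type space. I would then pass to the limit, using weak convergence plus a.e. convergence and the monotone structure to identify the limit as a weak solution of $(\mathrm{P}_{1})_{\Lambda}$. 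Avoiding concentration at the critical exponent, precisely through the semistability of the minimal branch, is the principal obstacle; this is the step where the specific structure of the mixed operator and the known weak comparison/regularity results for $\mathcal{L}_1$ cited in the Introduction are most heavily used.
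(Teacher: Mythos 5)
The paper does not actually prove Theorem~\ref{thm:ALTRI}: it is quoted verbatim from \cite[Theorem 1.2]{AMT} (and \cite[Theorem 1.1]{MS}), and the body of the paper only discusses, in Section~\ref{sec.Prel}(iv) and Lemma~\ref{lem:existencemusharp}, how to transfer the statement to the $\e$-dependent operator $\LL$ and how to make the threshold uniform in $\e$. The cited proof of \cite{AMT}, as the paper paraphrases it, is \emph{not} a pure monotone iteration: it constructs the first solution by applying Moameni's critical-point theory on convex subsets \cite{Moameni}, i.e.\ by finding a critical point of the energy over the constrained set $M=\{v:\|v\|_{L^\infty}\le r,\ v\geq 0\}$, which produces both the solution and the $L^\infty$-bound $\|u\|_{L^\infty}\leq r$ in one stroke. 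Your proposal is the classical ABC route (small subsolution $t\varphi_1$, torsion-based supersolution, monotone iteration between them, semistability for the endpoint). This is a genuinely different route: your version is more elementary, while the Moameni-type argument is what lets \cite{AMT} reach even the supercritical exponent (the paper stresses the result ``actually holds in the supercritical case as well''), which a bare iteration scheme would not handle.

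Two points in your sketch need tightening. First, ``$Mw$ is a supersolution for $M$ large enough whenever $\lambda$ is small'' is not correct as stated: the superlinear term forces $(Mw)^{2^*-1}\gg Mw$ for large $M$, so one must pick $M$ in an intermediate window. This is exactly the condition that appears as $(\star)$ in the proof of Lemma~\ref{lem:existencemusharp}: for $\lambda$ below a threshold there exist $0<r_1<r_2$ with $C(r^{2^*-2}+\lambda r^{p-1})\leq 1$ on $[r_1,r_2]$, and one takes $M$ in $[r_1,r_2]$. Second, your treatment of the endpoint $\lambda=\Lambda$ hinges on the semistability of the minimal branch, i.e.\ $\lambda_1\bigl(\mathcal{L}_1-\lambda p\,u_\lambda^{p-1}-(2^*-1)u_\lambda^{2^*-2}\bigr)\geq 0$. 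For a nonlinearity that is neither convex nor concave this is not automatic from minimality and has to be established (it is a lemma in \cite{ABC}, not a freebie); moreover, in the mixed setting the linearized potential contains the factor $u_\lambda^{p-1}$, which is singular at $\de\Omega$, so one needs the Hopf lemma and the $C^1(\overline\Omega)$-regularity for $\mathcal{L}_1$ (Theorem~\ref{thm:WMPSMPHopfTogether} and Theorem~\ref{thm:regulAntCozzi}) already in place before the stability inequality with $\psi=u_\lambda$ makes sense. As written, the proposal asserts rather than proves this step, which is precisely the delicate core of the endpoint existence.
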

We stress that the the above results actually holds in the supercritical case as well.
For sake of completeness, we recall that the existence of the first positive solution in \cite{AMT} is obtained by means of a sub/su\-per\-so\-lu\-tion scheme as in \cite{ABC}. We also note that, if we fix $\varepsilon \in (0,1]$, the above result holds for a threshold $\Lambda_{\varepsilon}$ now depending on $\varepsilon$.\\
We also mention that concave-convex problems in the mixed local-nonlocal setting has been previously studied in \cite{daSiSa}: there, the leading operator is a combination of $p$-Laplacian and fractional $p$-Laplacian and the existence of weak solutions is proved under the assumption $p>n$, being the interest of the authors to study the limiting problem as $p\to +\infty$.\\

Our first result is somehow complementary to Theorem \ref{thm:ALTRI} stated above and it is the counterpart of \cite[Theorem 2.2]{ABC} in the mixed local-nonlocal setting.
 
\begin{theorem} \label{thm:main}
Let $\varepsilon \in (0,1]$ be fixed. Then, there exists a constant $M_{\varepsilon}>0$ such that, for every $\lambda \in (0,\Lambda_\varepsilon)$, problem \eqref{eq:Problem}$_{\lambda,\e}$ has at most one  solution $u_{\lambda}$ with 
  $$\|u_{\lambda,\e}\|_{L^{\infty}(\mathbb{R}^{n})} \leq M_{\varepsilon}.$$
\end{theorem}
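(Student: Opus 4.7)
Following the strategy of \cite[Theorem~2.2]{ABC} adapted to the mixed setting, the plan is to combine a Picone-type inequality for $\mathcal{L}_\varepsilon$ with a monotonicity property of $f(u)/u$ restricted to small values of $u$.

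First I set $f(u):=\lambda u^{p}+u^{2^{\ast}-1}$ and
\begin{equation*}
h(u):=\frac{f(u)}{u}=\lambda u^{p-1}+u^{2^{\ast}-2}\qquad (u>0).
\end{equation*}
A direct computation yields $h'(u)=-\lambda(1-p)\,u^{p-2}+(2^{\ast}-2)\,u^{2^{\ast}-3}$, so $h$ is strictly decreasing on the whole interval $(0,M_\varepsilon]$ as soon as
\begin{equation*}
M_\varepsilon^{\,2^{\ast}-1-p}\leq\frac{\lambda(1-p)}{2^{\ast}-2}.
\end{equation*}
The threshold will thus be chosen of the form $c\,\lambda^{1/(2^{\ast}-1-p)}$ with a constant $c>0$ depending only on $n$ and $p$; in particular, any two positive solutions $u_1,u_2$ with $\|u_i\|_{L^\infty(\mathbb{R}^n)}\leq M_\varepsilon$ take pointwise values inside the monotonicity window of $h$.

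Next, let $u_1,u_2$ be two such positive weak solutions. Using the interior and boundary regularity theory for $\mathcal{L}_\varepsilon$ (in the spirit of \cite{DeFMin, BDVV}), I may assume $u_1,u_2\in C^{1,\alpha}_{\mathrm{loc}}(\Omega)$ are strictly positive in $\Omega$ with a Hopf-type decay at $\partial\Omega$, so that the ratios $u_2^2/u_1$ and $u_1^2/u_2$ can be used as admissible test functions in the natural mixed energy space; if necessary, this is made rigorous through the regularisation $u_i\mapsto u_i+\delta$ followed by the passage $\delta\to 0^+$. The classical pointwise Picone identity for $-\Delta$ and the Brasco--Franzina fractional Picone inequality for $(-\Delta)^s$ then combine linearly into the mixed Picone-type inequality
\begin{equation*}
\bigl\langle \mathcal{L}_\varepsilon u_1,\,u_2^2/u_1\bigr\rangle\;\leq\;\bigl\langle \mathcal{L}_\varepsilon u_2,\,u_2\bigr\rangle.
\end{equation*}
Plugging in the weak formulations of \eqref{eq:Problem}$_\lambda$ on both sides and invoking the definition of $h$, this reduces to
\begin{equation*}
\int_\Omega h(u_1)\,u_2^2\,dx\;\leq\;\int_\Omega h(u_2)\,u_2^2\,dx.
\end{equation*}
Swapping the roles of $u_1$ and $u_2$ and summing gives the key Diaz--Saa-type inequality
\begin{equation*}
\int_\Omega\bigl(h(u_1)-h(u_2)\bigr)\bigl(u_1^2-u_2^2\bigr)\,dx\;\geq\;0.
\end{equation*}
Since $h$ is strictly decreasing on $(0,M_\varepsilon]$, the integrand is pointwise $\leq 0$ and strictly negative wherever $u_1\neq u_2$. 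The two opposite signs force the set $\{u_1\neq u_2\}$ to have zero measure, i.e.~$u_1=u_2$ a.e.~in $\Omega$.

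The main technical obstacle I expect is the rigorous derivation of the mixed Picone inequality with the ratio test functions: while the local pointwise statement is classical, its fractional counterpart, together with the admissibility of $u_2^2/u_1$ in the relevant energy space, requires a careful approximation via $u_1\mapsto u_1+\delta$ and a passage to the limit based on the Hopf-type boundary behaviour of solutions and on dominated/monotone convergence. Once this step is secured, the elementary computation of $h'$ pins down the explicit form of $M_\varepsilon$ and the argument closes without further difficulty.
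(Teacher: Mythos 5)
Your argument takes a genuinely different route from the paper (a Picone/D\'iaz--Saa scheme based on the monotonicity of $h(u)=f(u)/u$), and the scheme itself is coherent: granting the mixed Picone inequality with the ratio test functions, the symmetric inequality
\begin{equation*}
\int_\Omega\bigl(h(u_1)-h(u_2)\bigr)\bigl(u_1^2-u_2^2\bigr)\,dx\geq 0
\end{equation*}
combined with strict monotonicity of $h$ on the pointwise range of $u_1,u_2$ would indeed force $u_1=u_2$.

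However, there is a genuine gap that makes the argument prove a strictly weaker statement than the theorem. The monotonicity window you exploit is
\begin{equation*}
M^{\,2^{\ast}-1-p}\leq\frac{\lambda(1-p)}{2^{\ast}-2},\qquad\text{i.e.}\qquad M\sim \lambda^{1/(2^{\ast}-1-p)},
\end{equation*}
so the threshold you obtain \emph{tends to $0$ as $\lambda\to 0^{+}$}. The theorem asks for a single constant $M_\varepsilon>0$, depending on $\varepsilon$ but \emph{uniform over all} $\lambda\in(0,\Lambda_\varepsilon)$. Taking the infimum of your $\lambda$-dependent thresholds gives $0$, so your proof does not produce the constant required by the statement. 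This uniformity is not cosmetic: Theorem \ref{thm:MAIN3} is deduced exactly by playing the $\lambda$-independent lower bound $\|v_\lambda\|_{L^\infty}\geq M_\varepsilon$ against a compactness argument as $\lambda_j\to 0$, and with a $\lambda$-shrinking bound that deduction collapses.

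The way the paper escapes the $\lambda$-dependence is worth noting, because it identifies precisely the extra structure your argument is not using. The paper linearizes the \emph{purely sublinear} problem at $\lambda=1$ and extracts a spectral gap $\beta_\varepsilon>0$ for $\LL-p\,w_{1,\varepsilon}^{p-1}$ (Lemma \ref{lem:Lemma3.6_ABC}), and then, writing $v=u_{\lambda,\varepsilon}+g$ with the \emph{minimal} solution $u_{\lambda,\varepsilon}$ and $g\geq 0$, uses the scaling identity $\zeta_\varepsilon=\lambda^{1/(1-p)}w_{1,\varepsilon}$ together with the comparison $\zeta_\varepsilon\leq u_{\lambda,\varepsilon}$ to cancel the $\lambda$ in the concave term: $\lambda\, p\, u_{\lambda,\varepsilon}^{p-1}\leq p\, w_{1,\varepsilon}^{p-1}$. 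The convex term is then controlled by the $L^\infty$-bound $M_\varepsilon$, and the spectral gap closes the argument. This is what makes $M_\varepsilon$ depend only on $\varepsilon$ (through $\beta_\varepsilon$), not on $\lambda$. To match the theorem you would need to incorporate a comparable device; a naive application of the Picone inequality to the full nonlinearity cannot give a $\lambda$-uniform window.

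A secondary remark: even within your scheme the verification of the mixed Picone inequality for the critical nonlinearity and the admissibility of $u_j^2/u_i$ in $\mathcal{X}^{1,2}(\Omega)$ needs the Hopf-type comparability of $u_1,u_2$ near $\partial\Omega$; you acknowledge this, and the paper indeed provides the necessary boundary regularity (Theorem \ref{thm:regulAntCozzi}, Theorem \ref{thm:WMPSMPHopfTogether}), so this part is a manageable technicality. The uniformity issue is the real obstruction.
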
 
Note that the if the unique solution of Theorem \ref{thm:main} exists, it has to be the minimal one found in Theorem \ref{thm:ALTRI}.

\medskip

Following the seminal paper \cite{ABC}, it is natural to wonder whether a second positive solution exists or not for all $\lambda \in (0,\Lambda)$. A similar question has been answered in \cite{MS} for sublinear perturbations of a subcritical nonlinearity, so leaving the critical case unsolved. 

To better understand the difficulties one has to face in the case of mixed operators like $\LL$, let us briefly recall the method employed in \cite{ABC} to find a second positive solution of \eqref{eq:ABC}:
\begin{itemize}
\item[i)] show that the first solution found is a minimizer of the functional naturally associated with \eqref{eq:ABC} in $C^1$-topology;
\item[ii)] thanks to the famous result by Brezis and Nirenberg \cite{BNH1C1}, one inherits that the first solution is actually a minimizer in the $H^1$-topology;
\item[iii)] prove the validity of the Palais-Smale condition under a certain value proportional to the best Sobolev constant, here making use of the Aubin-Talenti functions;
\item[iv)] find a second positive solution of mountain-pass-type.
\end{itemize}

Despite i) can still be proved in our case, the above scheme finds a first obstacle once a result like the one in \cite{BNH1C1} is needed. To this aim, and thanks to the recent regularity results proved in \cite{AC2} and \cite{SVWZ2}, we establish the following

\begin{theorem} \label{thm:H1vsC1}
Let $\Phi: \mathcal{X}^{1,2}(\Omega) \to \mathbb{R}$ defined as 
\begin{equation}\label{eq.DefPhi}
\Phi(u):= \dfrac{1}{2}\,\rho(u)^2 - \int_{\Omega}F(x,u)\, dx,
\end{equation}
\noindent where $F(x,u):=\int_{0}^{u}f(x,s)\,ds$ and with $f$ satisfying that
\begin{equation}\label{eq:growth_on_f}
|f(x,u)|\leq C_f\, (1+|u|^{2^*-1}).
\end{equation}
Let $u_0 \in \mathcal{X}^{1,2}(\Omega)$ be a local minimizer of $\Phi$ in the $C^1$-topology. Then, $u_0$ is a local minimizer in the $\mathcal{X}^{1,2}$-topology.
\end{theorem}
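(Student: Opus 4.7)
The plan is to transport the classical Brezis--Nirenberg scheme of \cite{BNH1C1} to the mixed local-nonlocal framework, with the regularity theory of \cite{AC2} and \cite{SVWZ2} as the main analytical input. Suppose, for contradiction, that $u_0$ fails to be an $\mathcal{X}^{1,2}$-local minimizer. Then there exists a sequence $\delta_n\to 0^+$ and functions $v_n\in\mathcal{X}^{1,2}(\Omega)$ with $\rho(v_n-u_0)\leq\delta_n$ and $\Phi(v_n)<\Phi(u_0)$. For every small $\delta>0$ I would introduce the constrained minimum
\begin{equation*}
m_\delta := \inf\bigl\{\Phi(w) : w\in\mathcal{X}^{1,2}(\Omega),\ \rho(w-u_0)\leq\delta\bigr\}.
\end{equation*}
The first step is to show that $m_\delta$ is attained at some $u_\delta\in\overline{B_\delta(u_0)}$. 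This rests on the weak lower semicontinuity of $\rho^2$, on the compact embedding $\mathcal{X}^{1,2}(\Omega)\hookrightarrow L^q(\Omega)$ for every $q<2^*$ (to handle the subcritical part of $F$), and on a local concentration-compactness argument which exploits the smallness of $\delta$ to keep the critical part under control. By construction $\Phi(u_\delta)\leq\Phi(v_n)<\Phi(u_0)$ once $\delta\geq\delta_n$, hence $u_\delta\neq u_0$.

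Next, since $u_0$ is a critical point of $\Phi$ (being a $C^1$-local minimizer), one can rule out $u_\delta$ sitting in the interior of the ball for $\delta$ small, so $\rho(u_\delta-u_0)=\delta$. The Lagrange multiplier rule then produces $\mu_\delta\geq 0$ such that, in the weak form associated with $\mathcal{L}_\varepsilon$,
\begin{equation*}
(1+2\mu_\delta)\,\mathcal{L}_\varepsilon u_\delta = f(x,u_\delta) + 2\mu_\delta\,\mathcal{L}_\varepsilon u_0.
\end{equation*}
Testing this identity against $u_\delta-u_0$ and using $\Phi'(u_0)=0$ together with $\rho(u_\delta-u_0)=\delta$ yields
\begin{equation*}
2\mu_\delta\,\delta^2 = \int_\Omega\bigl(f(x,u_\delta)-f(x,u_0)\bigr)(u_\delta-u_0)\,dx - \delta^2,
\end{equation*}
which, combined with the growth \eqref{eq:growth_on_f} and an $L^\infty$-bound on $u_\delta$ (see below), produces a uniform control on $\mu_\delta$.

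The analytic heart of the proof is a uniform $L^\infty(\mathbb{R}^n)$-bound for the family $\{u_\delta\}$. Rewriting the Euler-Lagrange equation as $\mathcal{L}_\varepsilon u_\delta = g_\delta$, where $g_\delta$ has critical growth in $u_\delta$ plus a fixed bounded contribution from $\mathcal{L}_\varepsilon u_0$, a Brezis--Kato/Moser iteration adapted to the mixed operator (precisely the content of \cite{AC2} and \cite{SVWZ2}) delivers the desired bound, provided $\delta$ is so small that $\|u_\delta-u_0\|_{L^{2^*}}$ lies below the iteration threshold. Feeding $u_\delta\in L^\infty$ back, the right-hand side of the equation becomes uniformly bounded, and the boundary $C^{1,\alpha}$ regularity of \cite{AC2,SVWZ2} yields a uniform $C^{1,\alpha}(\overline{\Omega})$-bound on $u_\delta$. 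Since $\rho(u_\delta-u_0)\to 0$, an Ascoli--Arzel\`a extraction upgrades the convergence to $u_\delta\to u_0$ in $C^1(\overline{\Omega})$ along a subsequence, which contradicts the $C^1$-local-minimality of $u_0$ because $\Phi(u_\delta)<\Phi(u_0)$.

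The principal obstacle is precisely the uniform $L^\infty$-estimate: the mixed structure of $\mathcal{L}_\varepsilon$ requires a Moser iteration which simultaneously absorbs the fractional tail and the critical growth of $f$, and this is exactly the step that the recent regularity theory of \cite{AC2,SVWZ2} unlocks. A secondary but delicate point is the attainment of $m_\delta$, which is manageable once $\delta$ is taken small enough to prevent concentration, and the boundedness of the Lagrange multiplier, which becomes a consequence of the $L^\infty$-bound via the identity displayed above.
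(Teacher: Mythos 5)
Your proposal reproduces the paper's Brezis--Nirenberg scheme essentially step for step: constrained minimization of $\Phi$ over small $\mathcal{X}^{1,2}$-balls around $u_0$, a Lagrange multiplier, a uniform $L^\infty$ and then $C^{1,\alpha}(\overline{\Omega})$ bound from \cite{SVWZ2,AC2} with the Moser iteration closed by the smallness of the radius, and an Arzel\`a--Ascoli extraction to force $u_\delta\to u_0$ in $C^1(\overline{\Omega})$, contradicting the $C^1$-minimality. Two small remarks on your presentation: the claim that the interior case can be ruled out is neither justified nor needed (the interior case simply gives $\mu_\delta=0$, which is harmless), and the step you frame as \emph{producing a uniform control on $\mu_\delta$} is both circular (it presupposes the $L^\infty$ bound you derive afterwards) and unnecessary, since the sign $\mu_\delta\ge 0$ alone makes $g_\delta=(1+2\mu_\delta)^{-1}\bigl(f(\cdot,u_\delta)+2\mu_\delta\,\mathcal{L}_\varepsilon u_0\bigr)$ satisfy \eqref{eq:growth_on_f} with a $\delta$-independent constant, which is all that the iteration uses---the paper normalizes $u_0=0$ precisely to make this transparent.
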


Theorem \ref{thm:H1vsC1} allows to achieve ii) as well,
but a new difficulty arises once iii) has to be faced, the main reason 
being the lack of scaling invariance, and its consequences on the mixed Sobolev inequality, previously discussed. 
A similar issue occurred in the recent \cite{BV2} when dealing with mild singular problems, and it is the main 
reason to consider $\LL$ instead of $\mathcal{L}_{1}$ because, roughly speaking, the presence of the parameter $
\varepsilon$ allows us to adjust the lack of scaling invariance, at least for small enough $\varepsilon$. In this 
way, we are able to fully follow the above scheme proving the following
 
\begin{theorem} \label{thm:main2}
 Let $\Omega\subset\R^n$ \emph{(}with $n\geq 3$\emph{)} be a bounded open set
 with smooth enough boundary, and let $p\in (0,1)$ be fixed.
 
 Then, there exist $\lambda_{\star} > 0$ and $
 \varepsilon_0 \in (0,1)$
 such that problem \eqref{eq:Problem}$_{\lambda}$
 admits a second positive solutions for every $\varepsilon \in (0,\varepsilon_0)$ and for every $\lambda \in (0,
 \lambda_{\star})$.
\end{theorem}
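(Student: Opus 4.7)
The plan is to implement the four-step scheme of \cite{ABC} recalled in the introduction, applied to the energy functional
\[
\Phi_\lambda(u) := \frac{1}{2}\rho(u)^2 - \frac{\lambda}{p+1}\int_{\Omega}(u^+)^{p+1}\,dx - \frac{1}{2^*}\int_{\Omega}(u^+)^{2^*}\,dx
\]
on $\mathcal{X}^{1,2}(\Omega)$; by the strong maximum principle available for $\LL$, every nontrivial critical point of $\Phi_\lambda$ is a positive weak solution of \eqref{eq:Problem}$_\lambda$. Fix $\varepsilon \in (0,\varepsilon_0)$ and $\lambda \in (0,\lambda_\star)$ with $\lambda_\star \leq \Lambda_{\varepsilon_0}$ (both thresholds to be determined), and let $u_\lambda$ denote the minimal solution provided by Theorem \ref{thm:ALTRI}. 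The goal is to produce a second critical point $v_\lambda \neq u_\lambda$ of $\Phi_\lambda$ through a mountain-pass argument centered at $u_\lambda$.

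First I would show that $u_\lambda$ is a local minimizer of $\Phi_\lambda$ in the $\mathcal{X}^{1,2}$-topology. Following the $\mathrm{ABC}$-style argument, I would pick $\bar\lambda \in (\lambda,\Lambda_\varepsilon)$, compare $u_\lambda$ with the larger minimal solution $u_{\bar\lambda}$, and use the $C^{1,\alpha}$ regularity results of \cite{AC2, SVWZ2} together with the strong maximum principle for $\LL$ to show that $u_\lambda$ lies strictly between two ordered sub/supersolutions in the $C^1$ sense, hence is a $C^1$-local minimizer of $\Phi_\lambda$. Since the nonlinearity $f(x,u) = \lambda(u^+)^p + (u^+)^{2^*-1}$ satisfies the growth condition \eqref{eq:growth_on_f}, Theorem \ref{thm:H1vsC1} then upgrades $u_\lambda$ to an $\mathcal{X}^{1,2}$-local minimizer. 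Either this minimizer is not strict, in which case a nearby distinct minimum already furnishes a second positive solution, or it is strict and provides the first half of a mountain-pass geometry.

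The main obstacle, as flagged in the introduction, is to verify the Palais--Smale condition for $\Phi_\lambda$ at the mountain-pass level
\[
c_\lambda := \inf_{\gamma\in\Gamma}\max_{t\in[0,1]} \Phi_\lambda(\gamma(t)),
\]
which requires the sublevel estimate $c_\lambda < \Phi_\lambda(u_\lambda) + \frac{1}{n}S^{n/2}$, where $S$ is the classical Sobolev constant in $\mathbb{R}^n$ (coinciding with the best mixed Sobolev constant, as recalled in the introduction). I would test the minimax along paths $t \mapsto u_\lambda + t U_{\delta,x_0}$, where $U_{\delta,x_0}$ is a cut-off Aubin--Talenti function with concentration parameter $\delta > 0$ and $x_0 \in \Omega$. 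The purely local part of the computation reproduces the analysis of \cite{ABC} and yields a strictly positive gain $g(\delta) > 0$ from the sublinear term $\lambda u^p$ (exploiting $p < 1$), while the fractional seminorm of $U_{\delta,x_0}$ contributes a correction of the form $\varepsilon\, h(\delta)$ with $h(\delta)$ of order $\delta^{2s-2}$, which is exactly the lack-of-scaling-invariance term highlighted in the introduction. After fixing $\delta$ so that $g(\delta) > 0$ is a definite number, one then chooses $\varepsilon_0$ so small that $\varepsilon_0\, h(\delta) < g(\delta)$; this is the step where the parameter $\varepsilon$ is genuinely exploited, and the reason the conclusion is unavailable for $\mathcal{L}_1$.

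With the sublevel estimate in hand, the Palais--Smale condition at level $c_\lambda$ follows by standard arguments: boundedness of PS sequences uses $p < 1$ together with the usual combination of $\Phi_\lambda$ with $\frac{1}{2^*}\Phi_\lambda'$, while the concentration-compactness alternative for $\LL$ is ruled out by the energy bound. The mountain-pass theorem then produces the critical point $v_\lambda \neq u_\lambda$; testing the equation against $v_\lambda^-$ and invoking the strong maximum principle for $\LL$ gives $v_\lambda > 0$ in $\Omega$, so that $v_\lambda$ is the sought second positive weak solution of \eqref{eq:Problem}$_\lambda$.
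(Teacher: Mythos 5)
Your overall strategy matches the paper's: establish that $u_\lambda$ is a $\mathcal{X}^{1,2}$-local minimizer via the $C^1$-to-$\mathcal{X}^{1,2}$ principle (Theorem \ref{thm:H1vsC1}), derive the strict energy threshold $c_\lambda < I_\lambda(u_\lambda)+\tfrac{1}{n}S_n^{n/2}$ by testing along $u_\lambda+tU$ for a (cut-off) Aubin--Talenti bubble, use the smallness of $\e$ to kill the nonlocal excess, and conclude via a minimax/Ekeland argument. The dichotomy you sketch (strict vs.\ non-strict local minimum) is the Tarantello/Haitao version of what the paper does in Cases A)/B) with Ekeland's principle on the cone $T=\{u\geq u_\lambda\}$; this is essentially equivalent.

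However there are some substantive slips worth flagging.

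\textbf{Sign of the fractional exponent.} You claim the fractional seminorm contributes $\e\,h(\delta)$ with $h(\delta)$ of order $\delta^{2s-2}$, i.e.\ blowing up as the bubble concentrates. In fact, by scaling the $\dot H^s$-seminorm of a bubble concentrating at scale $\delta$ behaves like $\delta^{2-2s}$, i.e.\ it goes to zero; this is precisely what \cite[Lemma 4.11]{BDVV5} gives after cut-off, and it is what the paper uses (\eqref{eq:estimVeNonlocal}: $\e[U_\e]^2_s = o(\e^{1+\alpha(1-s)})$). Your exponent has the wrong sign. In your decoupled scheme (fix $\delta$, then choose $\e_0$ small) this is cosmetic because $h(\delta)$ is a finite number anyway; but the correct behavior is essential if one tries to understand why the Corollary for $n=3$, $s<1/2$ holds without smallness of $\lambda$ or $\e$.

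\textbf{Source of the gain.} You attribute the strictly positive gain $g(\delta)$ to the sublinear term $\lambda u^p$. In the paper (Lemma \ref{lem:CrucialLemma}), the sublinear contribution $-\mathcal{D}_\e$ is merely shown to be nonpositive (by convexity of $r\mapsto r^{p+1}$), while the actual gain of order $\e^{\alpha(n-2)/2}$ comes from the cross term $\int_\Omega U_\e^{2^*-1}u_\lambda\,dx$, in the spirit of Brezis--Nirenberg \cite{BN89} / Tarantello \cite{Tarantello}. Claiming the gain comes from the sublinear term is a misreading of the ABC computation; without the cross term the argument would not close.

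\textbf{Uniform-in-$\e$ estimates.} You state "fix $\delta$ so that $g(\delta)>0$ is a definite number" — but $u_\lambda=u_{\lambda,\e}$ depends on $\e$, hence so does the cross term $\int U_\delta^{2^*-1}u_{\lambda,\e}$ and thus $g$. To make $g(\delta)$ uniform in $\e$ one needs both a uniform $L^\infty$-bound on $u_{\lambda,\e}$ and a uniform strictly positive lower bound on an interior ball. The paper obtains the former by tracking an $\e$-independent constant in the De Giorgi iteration (Lemma \ref{lem:bound_indip_eps}, then Theorem \ref{thm:ulambdaminimizer}), and the latter by comparison with the purely sublinear solution $w_{\lambda,\e}$ together with its uniform interior positivity for $\e$ small (Proposition \ref{prop:SublinearGeneral} iii). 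Those uniform estimates are a large portion of the paper's effort, and they also constrain $\e_0$: the threshold $\e_0$ is not just about beating the nonlocal correction, it is also what guarantees the uniform interior lower bound. Your sketch omits this entirely, and without it the energy estimate might degenerate as $\e\to 0$.

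\textbf{Parameter scaling.} You fix $\delta$ first and then shrink $\e_0$; the paper instead ties the concentration scale to $\e$ by setting $\delta=\e^\alpha$ with $\alpha\in(0,1]$ chosen so that $1+\alpha(1-s)>\alpha(n-2)/2$, and then sends $\e\to 0$. Both routes can be made to work, but the paper's choice lets the gain $D_0\e^{\alpha(n-2)/2}$ dominate every error term \emph{simultaneously} in the single small parameter $\e$ (and makes the $n=3$, $s<1/2$ corollary transparent). Your decoupled scheme would still need the uniform-in-$\e$ bookkeeping above before the final choice of $\e_0(\delta)$.

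\textbf{Minor.} Your functional $\Phi_\lambda$ is written with $\rho(u)^2$; the Euler--Lagrange equation of that functional has leading operator $-\Delta+(-\Delta)^s=\mathcal{L}_1$, not $\LL$. You clearly intend $\rho_\e(u)^2$ (the paper's $I_{\lambda,\e}$); just note that otherwise the critical points solve the wrong equation. Also, the paper's $\lambda_\star$ is taken to be the $\e$-independent threshold $\mu_\sharp$ of Lemma \ref{lem:existencemusharp} rather than $\Lambda_{\e_0}$; the latter is less convenient because $\Lambda_\e$ is only known to be bounded below uniformly (\eqref{eq:existencemusharp}), not equal to a common value.

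With these corrections the proposal tracks the paper's proof faithfully.
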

As it clearly appears from the statement, there is a first price to pay once considering $\LL$, because the existence is not proved for every $\lambda$. Less evident is the second fee we have to pay. Let us briefly describe it: the first solution found in Theorem \ref{thm:ALTRI} can still be found once $\mathcal{L}_{1}$ is replaced with $\LL$ but now it implicitly depends on $\varepsilon$ and this forces to carefully keep track of the dependence on $\varepsilon$ of many ingredients like $L^{\infty}$-bounds and so on.\\
As in \cite{BV2}, the key result to be proved is Lemma \ref{lem:CrucialLemma} and we want to explicit mention that a careful inspection of its proof shows that a better result can be obtained, this time with big restrictions on both the dimension $n$ and the fractional parameter $s$:
\begin{corollary}
Let $n=3$ and $s\in \left(0,\tfrac{1}{2}\right)$. Then problem \eqref{eq:Problem}$_{\lambda}$ admits at least two positive weak solutions for every $\lambda \in (0,\Lambda)$.
\end{corollary}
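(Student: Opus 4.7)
The plan is to run the proof of Theorem \ref{thm:main2} verbatim, replacing Lemma \ref{lem:CrucialLemma} with a sharper version that, in the specific range $n = 3$ and $s \in (0, 1/2)$, holds for every $\varepsilon \in (0,1]$ and every $\lambda \in (0, \Lambda)$. Once such a uniform mountain-pass level estimate is in hand, no smallness condition on $\varepsilon$ nor on $\lambda$ is needed any longer, and the second positive solution exists on the full range stated in the corollary.

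The heart of Lemma \ref{lem:CrucialLemma} is the strict inequality
$$\sup_{t \geq 0}\Phi\bigl(u_0 + t\,\eta\, U_\delta\bigr) < \Phi(u_0) + \frac{1}{n}\,S^{n/2},$$
where $U_\delta$ is a truncated Aubin--Talenti bubble of concentration scale $\delta \to 0^+$ centred at a suitable point of $\Omega$, and $\eta$ is a standard cutoff. Expanding $\Phi$ along this path, four competing contributions appear: the scale-invariant local part supplies the sharp threshold $\tfrac{1}{n}S^{n/2}$; the fractional seminorm correction $\tfrac{\varepsilon}{2}\,[\eta U_\delta]_s^2$ scales like $\varepsilon\,\delta^{\,2-2s}$, by the non-invariance identity recalled in the introduction; the sublinear perturbation $-\lambda\int F(x, u_0 + t\eta U_\delta)\, dx$ produces, for $n = 3$, a negative correction of order $\lambda\,\delta^{(p+1)/2}$; and all cross terms between $u_0$ and $\eta U_\delta$ (both local and nonlocal) are of strictly lower order thanks to the $L^\infty$ and Hölder regularity of $u_0$ granted by \cite{AC2, SVWZ2}.

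The decisive observation is that for $s \in (0, 1/2)$ and $p \in (0,1)$ one has $2 - 2s > 1 > (p+1)/2$. Hence the positive nonlocal obstruction $\varepsilon\,\delta^{\,2-2s}$ is strictly dominated, as $\delta \to 0^+$, by the negative sublinear gain of order $\delta^{(p+1)/2}$, \emph{uniformly} in $\varepsilon \in (0,1]$ and \emph{uniformly} for $\lambda$ varying in any compact subset of $(0, \Lambda)$. One can therefore choose $\delta = \delta(\lambda)$ small enough that the desired strict inequality holds without any restriction on $\varepsilon$. With this refined lemma in place, the remainder of the proof of Theorem \ref{thm:main2} applies word for word: Theorem \ref{thm:H1vsC1} ensures that the minimal solution $u_0$ from Theorem \ref{thm:ALTRI} is a local minimizer of $\Phi$ in $\mathcal{X}^{1,2}(\Omega)$, the mountain-pass geometry is verified, the Palais--Smale condition holds below the level $\Phi(u_0) + \tfrac{1}{n}S^{n/2}$ via the Brezis--Lieb / concentration-compactness analysis adapted to $\mathcal{L}_\varepsilon$, and the mountain-pass theorem delivers a critical point $u_1 \neq u_0$, which is positive thanks to the strong maximum principle for $\mathcal{L}_\varepsilon$.

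The main obstacle is the careful bookkeeping of the asymptotic expansions: one needs a two-sided sharp estimate $[\eta U_\delta]_s^2 = C\,\delta^{2-2s}\bigl(1 + o(1)\bigr)$ rather than merely an upper bound, together with a matching lower bound on the sublinear gain that is uniform in $\lambda$ bounded away from zero. Both computations follow well-established paradigms but require attention to the cross terms between $u_0$ and the bubble, which is precisely where the regularity estimates of \cite{AC2, SVWZ2} enter substantially. The hypotheses $n = 3$ and $s < 1/2$ are used exactly to guarantee the strict ordering of exponents $2 - 2s > (p+1)/2$ for the whole range $p \in (0,1)$, with no room left for the nonlocal term to compete with the sublinear gain.
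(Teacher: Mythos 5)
There is a genuine gap, and it lies in the identification of the decisive term in the expansion of $I_{\lambda,\varepsilon}(u_\lambda+tRU_\delta)$. In the paper's proof of Lemma \ref{lem:CrucialLemma} the gain that forces the mountain-pass level below $I_\lambda(u_\lambda)+\tfrac1nS_n^{n/2}$ is the \emph{critical interaction term} $-t^{2^*-1}R^{2^*-1}\int_\Omega U_\delta^{2^*-1}u_\lambda\,dx$, estimated from below in \eqref{eq:termUeTarantello} via Proposition \ref{prop:SublinearGeneral}\,-\,iii) and of size $D_0\,\delta^{(n-2)/2}$ (equal to $\delta^{1/2}$ when $n=3$); the sublinear contribution $-\mathcal{D}_\delta$ is only shown, by convexity in \eqref{eq:termDe}, to be \emph{non-positive}, and is not used as a quantitative gain. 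You instead attribute the gain to the sublinear term and put it at order $\delta^{(p+1)/2}$, while claiming that the cross terms between $u_0$ and the bubble are of ``strictly lower order''. This is backwards: for $n=3$ and $p\in(0,1)$ one has $\tfrac{p+1}{2}>\tfrac12$, so $\delta^{(p+1)/2}=o(\delta^{1/2})$ — the critical cross term is the dominant one, not a negligible one. Worse, the quantity that actually remains of the sublinear part after subtracting the linear-in-$U_\delta$ contribution (which cancels against the Euler–Lagrange equation of $u_\lambda$) is the convexity remainder $\mathcal{D}_\delta$, which is of strictly \emph{higher} order than $\delta^{(p+1)/2}$; if you insist on quantifying the sublinear gain you would have to estimate $\mathcal{D}_\delta$ from below, and that gives a power of $\delta$ that cannot by itself compensate the nonlocal obstruction.

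A secondary point: the scaling $[\eta U_\delta]_s^2\sim\delta^{2-2s}$ is not what one gets for $n=3$ after truncation. The Aubin--Talenti profile $V_1\sim|x|^{-1}$ in $\mathbb R^3$ has infinite $H^s$ seminorm precisely when $s<1/2$, and the truncated bubble then picks up a tail contribution of order $\delta^{n-2}=\delta$, which \emph{dominates} $\delta^{2-2s}$ in the regime $s<1/2$. The actual upper bound on $\varepsilon[\eta U_\delta]_s^2$ is therefore $O(\varepsilon\,\delta)$, and the condition that singles out $s<1/2$ is precisely the requirement that this tail-driven obstruction $\delta^{n-2}$ be $o(\delta^{(n-2)/2})$ and still subordinate to the leading critical cross term — not the inequality $2-2s>\tfrac{p+1}{2}$ that your numerology produces, which would give the different (and $p$-dependent) threshold $s<(3-p)/4$. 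Your overall plan — re-read Lemma \ref{lem:CrucialLemma} with the concentration parameter decoupled from $\varepsilon$ and observe that the nonlocal correction is uniformly negligible — matches the paper's intent, but the bookkeeping of which exponent beats which is incorrect in a way that changes the role of $n$, $s$, and $p$, and would not, as written, yield the stated constraint.
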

Finally, once the existence of (at least) two solutions of problem  
\eqref{eq:Problem}$_{\lambda}$ has been established (at least for $\e$ small enough),
by exploiting Theorem \ref{thm:main} and by proceeding essentially
as in \cite{ABC} we can prove the following qualitative result.
\begin{theorem} \label{thm:MAIN3}
 Let $\e\in(0,1]$ be fixed, and assume that $\Omega$ is \emph{star-shaped}. Then, 
 $$\text{$\|v_{\lambda,\e}\|_{L^\infty(\Omega)}\to+\infty$ as $\lambda\to 0^+$},$$ where
 $v_{\lambda,\e}\in\mathcal{X}^{1,2}(\Omega)$ is \emph{any weak solution of}
 problem \eqref{eq:Problem}$_{\lambda}$ distinct from its mini\-mal solution
 $u_{\lambda,\e}$ \emph{(}see Theorem \ref{thm:ALTRI} and Lemma \ref{lem:existencemusharp}\emph{)}.
\end{theorem}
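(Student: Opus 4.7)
I would argue by contradiction, following the strategy of \cite{ABC}. Assume the statement fails: then there exist $M>0$ and a sequence $\lambda_k\to 0^+$ together with second solutions $v_k:=v_{\lambda_k,\e}$ (so $v_k\neq u_{\lambda_k,\e}$) satisfying $\|v_k\|_{L^\infty(\R^n)}\leq M$. Since $f_k:=\lambda_k v_k^p+v_k^{2^*-1}$ is then uniformly bounded in $L^\infty(\Omega)$, the regularity results of \cite{AC2,SVWZ2} produce a uniform $C^{0,\alpha}(\overline{\Omega})$ bound for $\{v_k\}$, while testing the equation against $v_k$ and using Sobolev embeddings yields a uniform $\mathcal{X}^{1,2}(\Omega)$ bound. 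Up to subsequences, $v_k\to v_0$ uniformly on $\overline{\Omega}$ and weakly in $\mathcal{X}^{1,2}(\Omega)$. Passing to the limit in the weak formulation (the sublinear term $\lambda_k v_k^p$ vanishes because $\lambda_k\to 0$ while $v_k^p$ is uniformly bounded), the limit $v_0\geq 0$ is a weak solution of the purely critical problem
\begin{equation*}
 \LL v_0=v_0^{2^*-1}\ \text{in}\ \Omega,\qquad v_0=0\ \text{in}\ \R^n\setminus\Omega.
\end{equation*}

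The second and crucial step is to show that $v_0\equiv 0$. Here the star-shapedness of $\Omega$ is exploited through a Pohozaev-type identity for $\LL$: the local contribution from $-\Delta$ is the classical Pohozaev identity, while the nonlocal contribution from $(-\Delta)^s$ is provided by the Ros-Oton--Serra identity. On a star-shaped $\Omega$ both boundary terms carry a favourable sign (since $x\cdot\nu\geq 0$ on $\partial\Omega$), and coupled with the purely critical right-hand side $v_0^{2^*-1}$ they force $v_0\equiv 0$. The regularity of $v_0$ up to $\partial\Omega$ needed to justify both identities is supplied by \cite{AC2,SVWZ2} applied to $v_0$, which is itself the uniform limit of the uniformly $C^{0,\alpha}$-bounded sequence $\{v_k\}$.

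Finally, from $v_k\to 0$ uniformly one gets $\|v_k\|_{L^\infty(\R^n)}\leq M_\e$ for $k$ large, where $M_\e$ is the constant from Theorem \ref{thm:main}. By minimality of $u_{\lambda_k,\e}$ one also has $u_{\lambda_k,\e}\leq v_k$ pointwise, hence $\|u_{\lambda_k,\e}\|_{L^\infty(\R^n)}\leq M_\e$ as well, and Theorem \ref{thm:main} then forces $v_k=u_{\lambda_k,\e}$ for every $k$ large, contradicting the choice $v_k\neq u_{\lambda_k,\e}$. The only truly delicate point is the Pohozaev-type rigidity in the middle step (one has to verify that the mixed Pohozaev identity is indeed at disposal for $\LL$ on star-shaped domains and applies to functions with only $\mathcal{X}^{1,2}(\Omega)\cap L^\infty(\R^n)$ regularity); the compactness argument in the first step is routine once the uniform $L^\infty$--$C^{0,\alpha}$ estimates are in force, and the final contradiction is an immediate consequence of Theorem \ref{thm:main}.
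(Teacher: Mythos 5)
Your proposal is correct and follows essentially the same strategy as the paper's proof: a compactness argument from the uniform $L^\infty$ bound yields a nonnegative limit $v_0$ solving the purely critical problem, the star-shapedness kills $v_0$ via the nonexistence result (the paper cites \cite[Theorem 1.3]{BDVV5}, which is the Pohozaev-type argument you sketch), and Theorem \ref{thm:main} then yields the contradiction. The two arguments differ only in order: the paper first uses Theorem \ref{thm:main} to deduce $\|v_j\|_{L^\infty}\geq M_\e$, then passes to the limit and contradicts the nonexistence of nontrivial critical solutions, whereas you first show $v_0\equiv 0$ and then contradict the uniqueness in Theorem \ref{thm:main} -- a logically equivalent reorganization.
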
 
 \medskip

\noindent \emph{Plan of the paper:}
 The paper is organized as follows: 
 \begin{itemize}
  \item In Section \ref{sec.Prel} we collect all the relevant notation, 
   definitions and preliminary results needed for the proof
   of our main results.
   \item In Section \ref{sec:sublinear} we briefly 
   study the \emph{purely sublinear} counterpart of problem
   \eqref{eq:Problem}$_{\lambda}$ (starting from the results
   proved in \cite{BMV}); in particular, we establish some
   \emph{uniform bounds} of the unique solution $w_{\lambda,\e}$ of this problem
   which will be used subsequent sections, and we establish
   Theorem \ref{thm:main}.
   \item In Section \ref{sec.H1C1} we prove Theorem \ref{thm:H1vsC1}.
   \item In Section \ref{sec:proofMain2} we prove Theorem \ref{thm:main2}.
   \item Finally, in Section \ref{sec:Last} we give the proof of Theorem \ref{thm:MAIN3}.
\end{itemize}
\section{Preliminaries}\label{sec.Prel}
 In this section we collect some preliminary definitions and results which
 will be used throughout the rest of the paper. 
 First of all, we review some basic properties
 of the fractional Sobolev spaces, and
 we properly
 introduce the adequate functional setting for the study
 of mixed local-nonlocal operators;  we then give the precise definition
 of \emph{weak sub/supersolution} of problem \eqref{eq:Problem}$_{\lambda}$,
 and we establish some qualitative properties of the solutions of this problem
 (provided they exist).
 Finally, we spend a few words the applicability
 of Theorem \ref{thm:ALTRI} (established in the case
  when $\e = 1$) to our $\e$-dependent operator $\LL$.
 \medskip
 
 \noindent\textbf{i)\,\,Sobolev spaces of fractional order.} 
 We begin this section by collecting a few basic facts
 fractional Sobolev spaces, which are naturally
 related to the fractional Laplacian $(-\Delta)^s$; we refer
 to \cite{LeoniFract} for a thorough introduction to this topic.
 \vspace*{0.1cm}

 Let $\varnothing\neq\mathcal{O}\subseteq\R^n$ be an arbitrary open set.
 The
 \emph{fractional Sobolev space} $H^s(\Oo)$ (of order $s\in(0,1)$) is the subset
 of $L^2(\Oo)$ defined
 as follows
$$H^s(\Oo) := \Big\{u\in L^2(\Oo):\,[u]^2_{s,\Oo} = \iint_{\Oo\times\Oo}
\frac{|u(x)-u(y)|^2}{|x-y|^{N+2s}}\,dx\,dy < +\infty\Big\}.$$
 We then list the few basic properties of $H^s(\Oo)$
we will exploit in this paper.
\begin{itemize}
 \item[a)] $H^s(\Oo)$ is a real Hilbert space, with the scalar product
 $$\langle u,v\rangle_{s,\,\Oo} 
 := \iint_{\Oo\times\Oo}\frac{(u(x)-u(y))(v(x)-v(y))}{|x-y|^{N+2s}}\,dx\,dy\qquad
 (u,v\in H^s(\Oo)).$$ 
 
 \item[b)] $C_0^\infty(\Oo)$ is a \emph{linear subspace of $H^s(\Oo)$}; in addition,
 in the particular case when $\Oo = \R^n$, we have that
  $C_0^\infty(\R^n)$ is \emph{dense} in $H^s(\R^n)$.
 \vspace*{0.1cm}
 
 \item[c)] If $\Oo = \R^n$ or if $\Oo$ has \emph{bounded boundary $\partial\Oo\in C^{0,1}$},
 we have the \emph{continuous embedding} $H^1(\Oo) \hookrightarrow H^s(\Oo)$,
 that is, there exists $\mathbf{c} = \mathbf{c}(n,s) > 0$ s.t.
 \begin{equation} \label{eq:H1embeddingHs}
  \iint_{\Oo\times\Oo}\frac{|u(x)-u(y)|^2}{|x-y|^{n+2s}}\,dx\,dy \leq 
  \mathbf{c}\,\|u\|_{H^1(\Oo)}^2\quad\text{for every $u\in H^1(\Oo)$}.
 \end{equation}
  In particular, if $\Oo\subseteq\R^n$ is a
  \emph{bounded open set} (with no regularity as\-sump\-tions on $\partial\Oo$) and if
  $u\in H_0^1(\Oo)$, setting $\hat{u} = u\cdot\mathbf{1}_\Oo\in H^1(\R^n)$ we have
  \begin{equation} \label{eq:H01embeddingHs}
  \iint_{\R^{2n}}\frac{|\hat{u}(x)-\hat{u}(y)|^2}{|x-y|^{n+2s}}\,dx\,dy \leq 
  \beta\,\int_\Oo|\nabla u|^2\,dx,
 \end{equation}
 where $\beta > 0$ is a suitable constant depending on $n,s$ and on $|\Omega|$. Here
 and throughout,  $|\cdot|$ denotes the $n$-dimensional Lebesgue measure.
\end{itemize}
\vspace*{0.1cm}

  \noindent\textbf{ii)\,\,The space $\mathcal{X}^{1,2}(\Omega)$.}
  Now we have briefly recalled some basic facts
  regarding fractional Sobolev spaces, we are in a position
  to introduce the adequate functional setting for the study of mixed local-nonlocal operators.
  \vspace*{0.1cm}
  
  Let then $\Omega\subseteq\R^n$ be a bounded open set Lipschitz boundary $\de\Omega$.
  We define the space
  $\mathcal{X}^{1,2}(\Omega)$ as the com\-ple\-tion
  of $C_0^\infty(\Omega)$ with respect to the
  \emph{global norm} 
  $$\rho(u) := \left(\||\nabla u|\|^2_{L^2(\R^n)}+[u]^2_{s,\R^n}\right)^{1/2},
  \qquad u\in C_0^\infty(\Omega).$$
  Due to its relevance in the sequel, we also introduce a distinguished notation
  for the \emph{cone of the non-negative functions} in $\mathcal{X}^{1,2}(\Omega)$: we set
  $$\mathcal{X}^{1,2}_+(\Omega) := \{u\in\mathcal{X}^{1,2}(\Omega):\,\text{$u\geq 0$ a.e.\,in $\Omega$}\}.$$
  Since this norm $\rho$ is induced by the scalar product
    $$\mathcal{B}_{\rho}(u,v) := \int_{\R^n}\nabla u\cdot\nabla v\,dx
    + \langle u,v\rangle_{s,\R^n}$$
  (where $\cdot$ denotes the usual scalar product in 
    $\R^n$), the space $\mathcal{X}^{1,2}(\Omega)$
    is a real \emph{Hilbert space}; most importantly, since $\Omega$ is bounded
    and $\de\Omega$ is Lipschitz, by combining the above
    \eqref{eq:H1embeddingHs} with the classical Poincar\'e
    inequality we infer that
    \begin{equation*}
     \vartheta^{-1}\|u\|_{H^1(\R^n)}\leq \rho(u)\leq \vartheta\|u\|_{H^1(\R^n)}\qquad
    \text{for every $u\in C_0^\infty(\Omega)$},
    \end{equation*}
    where $\vartheta > 1$ is a suitable constant depending on $n,s$ and on $|\Omega|$.
    Thus, $\rho(\cdot)$ and the full $H^1$-norm in $\R^n$
   are \emph{actually equivalent} on the space $C^\infty_0(\Omega)$, so that
   \begin{equation} \label{eq:defX12explicit}
   \begin{split}
    \mathcal{X}^{1,2}(\Omega) & = \overline{C_0^\infty(\Omega)}^{\,\,\|\cdot\|_{H^1(\R^n)}} \\
    & = \{u\in H^1(\R^n):\,\text{$u|_\Omega\in H_0^1(\Omega)$ and 
    $u\equiv 0$ a.e.\,in $\R^n\setminus\Omega$}\}.
    \end{split}
   \end{equation}
   We explicitly observe that, on account of \eqref{eq:defX12explicit}, 
   the functions in $\mathcal{X}^{1,2}(\Omega)$ naturally
  satisfy the nonlocal Dirichlet condition 
  prescribed in problem \eqref{eq:Problem}$_\lambda$, that is,
  \begin{equation*}
   \text{$u\equiv 0$ a.e.\,in $\R^n\setminus\Omega$ for every $u\in\mathcal{X}^{1,2}(\Omega)$}.
   \end{equation*}
   \begin{remark}[Properties of the space $\mathcal{X}^{1,2}(\Omega)$] \label{rem:spaceX12}
    For a future reference, we list in this remark some properties
    of the function space $\mathcal{X}^{1,2}(\Omega)$ which will be repeatedly
    exploited in the rest of the paper.
    \begin{enumerate}
     \item[1)] Since both $H^1(\R^n)$ and $H_0^1(\Omega)$ are \emph{closed} under the
     maximum/minimum o\-pe\-ra\-tion, it is readily seen that
     $$u_{\pm}\in \mathcal{X}^{1,2}(\Omega)\quad\text{for every $u\in\mathcal{X}^{1,2}(\Omega)$},$$
     \noindent where $u_{+}= \max\{u,0\}$ and $u_{-}=\max\{-u,0\}$.
     \vspace*{0.05cm}
     
     \item[2)] Since we are assuming that $\de\Omega$ is smooth, from
     \eqref{eq:defX12explicit} we see that a function $u\in H^1(\R^n)\cap C(\overline{\Omega})$
     belongs to the space $\mathcal{X}^{1,2}(\Omega)$ \emph{if and only if}
     $$\text{$u\equiv 0$ pointwise on $\R^n\setminus\Omega$}.$$
     \item[3)] On account of \eqref{eq:H01embeddingHs}, for every
     $u\in\mathcal{X}^{1,2}(\Omega)$ we have
     \begin{equation} \label{eq:X12HsRn}
    [u]_{s,\R^n}^2 = 
     \iint_{\R^{2n}}\frac{|u(x)-u(y)|^2}{|x-y|^{n+2s}}\,dx\,dy \leq\beta\int_\Omega|\nabla u|^2\,dx.
   \end{equation}
   As a consequence,
   the norm $\rho$ is \emph{glo\-bal\-ly equivalent} on $\mathcal{X}^{1,2}(\Omega)$ to the 
   $H_0^1$-no\-rm: in fact, by \eqref{eq:X12HsRn} there exists a constant $\Theta > 0$ such that
   \begin{equation} \label{eq:equivalencerhoH01}
    \||\nabla u|\|_{L^2(\Omega)}\leq \rho(u)\leq \Theta\||\nabla u|\|_{L^2(\Omega)}
    \quad\text{for every $u\in\mathcal{X}^{1,2}(\Omega)$}.
   \end{equation}
   
   \item[4)] By the (local) Sobolev inequality, for every $u\in\mathcal{X}^{1,2}(\Omega)$ 
   we have
   \begin{align*}
    S_n\|u\|_{L^{2^*}(\Omega)}^2 & = \|u\|_{L^{2^*}(\R^n)}^2
    \leq \int_{\R^n}|\nabla u|^2\,dx\leq \rho(u)^2.
   \end{align*}
   This, together with H\"older's inequality
   (recall that $\Omega$ is \emph{bounded}), proves the \emph{continuous embedding}
   $\text{$\mathcal{X}^{1,2}(\Omega)\hookrightarrow L^{q}(\Omega)$ for every $1\leq q\leq 2^*$}.$
   \vspace*{0.1cm}
   
   \item[5)] By combining \eqref{eq:equivalencerhoH01} with the \emph{compact embedding} of 
   $H_0^1(\Omega)\hookrightarrow L^q(\Omega)$ (holding true for every $1\leq q < 2^*$),
   we derive that also the embedding
   $$\text{$\mathcal{X}^{1,2}(\Omega)\hookrightarrow L^{q}(\Omega)$ is compact
   for every $1\leq q< 2^*$}.$$
   As a consequence, if $\{u_k\}_k$ is a bounded sequence in $\mathcal{X}^{1,2}(\Omega)$, it is possible
   to find a (unique) function $u\in\mathcal{X}^{1,2}(\Omega)$ such that (up to a sub-sequence)
   \begin{itemize}
    \item[a)] $u_n\to u$ weakly in $\mathcal{X}^{1,2}(\Omega)$;
    \item[b)] $u_n\to u$ \emph{strongly} in $L^q(\Omega)$ for every $1\leq q < 2^*$;
    \item[c)] $u_n\to u$ pointwise a.e.\,in $\Omega$.
   \end{itemize}
   
   \noindent Clearly, since both $u_n$ (for all $n\geq 1$) and $u$ \emph{identically vanish}
    out of $\Omega$, see
   \eqref{eq:defX12explicit}, we can replace
   $\Omega$ with $\R^n$ in the above assertions b)-c).
    \end{enumerate}
    We will exploit these properties without any further comment.
   \end{remark}
    We now observe that, since the leading operator 
    of \eqref{eq:Problem}$_\lambda$ is given by the $\e$-de\-pen\-dent operator
    $\LL = -\Delta+\e(-\Delta)^s$, it follows 
    that the bilinear form naturally associated with $\LL$ is the following
    $$\mathcal{B}_{\e}(u,v) = \int_{\R^n}\nabla u\cdot\nabla v\,dx
    + \e\,\langle u,v\rangle_{s,\R^n};$$
    in its turn, this form $\mathcal{B}_\e$ induces the $\e$-dependent quadratic form
    $$\rho_\e(u) = \||\nabla u|\|^2_{L^2(\R^n)}+\e\,[u]^2_s\qquad
     (\text{for $u\in\mathcal{X}^{1,2}(\Omega)$}).$$
    While in this perspective it should seem more \emph{natural} to use
     the
    norm $\rho_\e$ in place of $\rho$ on the space $\mathcal{X}^{1,2}(\Omega)$, 
    it is readily seen that these two norms
    are indeed \emph{equivalent on $\mathcal{X}^{1,2}(\Omega)$}
    (and equivalent to the $H_0^1$-norm), \emph{uniformly with respect to $\e$}:
    in fact, taking into account \eqref{eq:equivalencerhoH01} (and since $0<\e\leq 1$), we have
    \begin{equation} \label{eq:equivalenceuniforme}
     \||\nabla u|\|_{H^1_0(\Omega)}\leq\rho_\e(u)\leq \rho(u)\leq \Theta\||\nabla u|\|_{H^1_0(\Omega)}
     \end{equation}
   for some $\Theta > 0$ only depending on $n,s$. On account of \eqref{eq:equivalenceuniforme},
   we can indifferently use $\rho_\e(\cdot),\,\rho(\cdot)$ and the $H_0^1$-norm
   to define the topology of the space $\mathcal{X}^{1,2}(\Omega)$, and this choice
   does not produce any dependence on $\e\in(0,1]$.
   \medskip
   
    \noindent\textbf{Notation.} We conclude this second part of the section
    with a short list of notation, which will be used in the sequel;
    here, as usual, $\e\in(0,1]$ is a fixed parameter.
   \vspace*{0.1cm}
   
    1)\,\,Given any open set $\mathcal{O}\subseteq\R^n$ (not necessarily bounded), we set
    \begin{equation*}
    \begin{split}
     \mathrm{a)}&\,\,\mathcal{B}_{\e,\mathcal{O}}(u,v) =
    \int_{\mathcal{O}}\nabla u\cdot\nabla v\,dx
    + \e\, \langle u,v\rangle_{s,\R^n}\,\,(\text{for $u,v\in\mathcal{X}^{1,2}(\Omega)$}); \\
    \mathrm{b)}&\,\,\mathcal{Q}_{\e,\Oo}(u) = \mathcal{B}_{\rho_{\e},\Oo}(u,u)\,\,(\text{for $u
    \in\mathcal{X}^{1,2}(\Omega)$}).
    \end{split}
    \end{equation*}
    Since $\mathcal{X}^{1,2}(\Omega)\subseteq H^1(\R^n)$
    (see \eqref{eq:defX12explicit}), 
    the above forms $\mathcal{B}_{\e,\mathcal{O}}$ and $\mathcal{Q}_{\e,\Oo}$ are 
    well-de\-fined; moreover,
    again by taking into account \eqref{eq:defX12explicit} we have
	\begin{itemize}
	 \item $\mathcal{B}_{\e,\Omega}(u,v) = \mathcal{B}_{\e,\R^n}(u,v)\equiv \mathcal{B}_\e(u,v)$
	 for all $u,v\in\mathcal{X}^{1,2}(\Omega)$;
	 \item $\mathcal{Q}_{\e,\Omega}(u) = \mathcal{Q}_{\e,\R^n}(u) 
	 \equiv \rho_{\e}(u)$ for all $u\in\mathcal{X}^{1,2}(\Omega)$.
	\end{itemize}

    2)\,\,Given any \emph{bounded} open set $\mathcal{O}\subseteq\R^n$, we set
   $$\|u\|_{H_0^1(\Oo)} := \||\nabla u|\|_{L^2(\Oo)} = \Big(\int_\Oo|\nabla u|^2\,dx\Big)^{1/2}.$$
  \noindent\textbf{iii) Weak sub/su\-persolutions of problem \eqref{eq:Problem}$_\lambda$.} Thanks to
  all the prelimi\-naries reviewed so far, we are ready to provide
  the precise definition of \emph{weak sub/su\-persolutions} of problem  \eqref{eq:Problem}$_\lambda$.
  Actually, for a reason which will be clear later on, we consider
  the more general problem
  \begin{equation} \label{eq:pbgeneral}
    \begin{cases}
     \LL u = f(x,u) & \text{in $\Omega$} \\
     u = 0 & \text{in $\R^n\setminus\Omega$}
     \end{cases}
    \end{equation}
  where $f:\Omega\times\R\to\R$ is an arbitrary Carath\'eodory function satisfying
  the {growth condition} 
  \eqref{eq:growth_on_f}, that is, {\em there exists a constant $C > 0$ such that}
  \begin{equation*}
   |f(x,t)|\leq C(1+|t|^{2^*-1})\quad\text{\emph{for a.e.\,$x\in\Omega$ and every $t \in\R$}}.
   \end{equation*}
   Clearly, problem \eqref{eq:Problem}$_\lambda$ is of the form \eqref{eq:pbgeneral}, with the choice
   $f(x,t) = \lambda t^p+t^{2^*-1}$.
  \begin{definition} \label{def:weaksol}
   Let $f:\Omega\times\R\to\R$
   be a Carath\'eodory function sa\-ti\-sfying the growth condition
   \eqref{eq:growth_on_f}.
   We say that a function $u\in\mathcal{X}^{1,2}(\Omega)$ is 
   \begin{itemize}
    \item[a)] a \emph{weak subsolution}
   (resp.\,\emph{supersolution}) of problem \eqref{eq:pbgeneral}
   if we have 
    \begin{equation} \label{eq:weakformSol}
    \begin{gathered}
     \mathcal{B}_{\rho}(u,\varphi) \leq\,[\text{resp.}\,\geq]\,\, \int_{\Omega}f(x,u)\varphi\,dx  \\
    \text{for every test function $\varphi\in C_0^\infty(\Omega)$,\,$\varphi\geq 0$ in $\Omega$}.$$
    \end{gathered}
    \end{equation}
   \item[b)] a \emph{weak solution} of problem
   \eqref{eq:pbgeneral} if $u$ is both a weak subsolution and a weak
   supersolution of the same problem;
    \item[c)] a \emph{weak subsolution}
   (resp.\,\emph{supersolution, solution}) of problem 
   \begin{equation} \label{eq:generalpbSign}
    \begin{cases}
     \LL u = f(x,u) & \text{in $\Omega$} \\
     u > 0 & \text{in $\Omega$} \\
     u = 0 & \text{in $\R^n\setminus\Omega$}
     \end{cases}
    \end{equation}
   if $u$ is a weak subsolution
   (resp.\,supersolution, solution) of problem \eqref{eq:pbgeneral} in the sense
   already specified, further satisfying 
   $$\text{$u > 0$ a.e.\,in $\Omega$}.$$
\end{itemize}
  \end{definition}
  Now we have introduced
   Definition \ref{def:weaksol}, we prove a regularity result for solu\-tions of problem \eqref{eq:pbgeneral} which will be
   used in the sequel.
  \begin{theorem} \label{thm:regulAntCozzi}
    Let $f:\Omega\times\R\to\R$
   be a Carath\'eodory function sa\-ti\-sfying the growth condition
   \eqref{eq:growth_on_f}, and suppose that there exists a weak solution
   $u_0\in\mathcal{X}^{1,2}(\Omega)$ of problem \eqref{eq:pbgeneral}
   \emph{(}in the sense of Definition \ref{def:weaksol}\emph{)}.
   Then, we have
   \begin{equation} \label{eq:C1regulCozzi}
    \text{$u\in C^{1,\alpha}(\overline{\Omega})$ for some $\alpha\in (0,1)$}.
    \end{equation}
  \end{theorem}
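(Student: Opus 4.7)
My plan is to proceed in the standard three-step regularity bootstrap adapted to the mixed local-nonlocal setting, invoking the two recent regularity references \cite{SVWZ2} and \cite{AC2} that the authors explicitly mention as the key technical input.

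The first step, which I expect to be the main obstacle because of the critical exponent, is to upgrade $u_0\in\mathcal{X}^{1,2}(\Omega)$ to $u_0\in L^\infty(\Omega)$. The growth hypothesis \eqref{eq:growth_on_f} gives $|f(x,u_0)|\le C(1+|u_0|^{2^\ast-1})$, and since $u_0\in\mathcal{X}^{1,2}(\Omega)\hookrightarrow L^{2^\ast}(\Omega)$, I would run a Brezis--Kato / Moser iteration. Concretely, for $k>0$ and $q\ge 1$, I would test the weak formulation \eqref{eq:weakformSol} against $\varphi = \min(|u_0|,k)^{2q-2}u_0$, use the usual splitting $a(u)^{q-1} \le b(u)^q + c$ on the critical term (absorbing the dangerous part via $L^{2^\ast}$-summability and the dominated convergence trick), and discard the nonnegative contribution coming from the nonlocal form (which plays in our favor because $(u_0(x)-u_0(y))(\varphi(x)-\varphi(y))\ge 0$ for the convex truncation $\varphi$). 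Together with the Sobolev embedding applied to $\min(|u_0|,k)^{q}$, this produces a self-improving inequality that iterates into $L^r$ for every $r<\infty$, and finally into $L^\infty(\Omega)$ after passing to the limit $k\to\infty$ and taking $q\to\infty$.

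Once $u_0\in L^\infty(\Omega)$ is known, the source $g(x):=f(x,u_0(x))$ is bounded. At this point I would invoke the interior/boundary Hölder regularity available for the mixed operator $\LL$ (as established in \cite{SVWZ2}) to deduce $u_0\in C^{0,\beta}(\overline{\Omega})$ for some $\beta\in(0,1)$, using that $u_0$ vanishes identically outside $\Omega$ (see \eqref{eq:defX12explicit}), so the nonlocal tail term is harmless.

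Finally, having $u_0\in L^\infty\cap C^{0,\beta}(\overline{\Omega})$, the right-hand side $f(\cdot,u_0)$ is itself Hölder continuous in $\Omega$ (or at least bounded and Hölder via composition with the Hölder function $u_0$), so we can apply the global $C^{1,\alpha}$-regularity result for $\LL$ from \cite{AC2}, which is precisely tailored to this mixed operator under a smooth boundary and a bounded source, to conclude $u_0\in C^{1,\alpha}(\overline{\Omega})$ for some $\alpha\in(0,1)$. The local boundedness step is the delicate one because the exponent $2^\ast-1$ is exactly Sobolev-critical: I would be careful that in the Brezis--Kato scheme the constant deteriorates only on a set where $u_0$ is large, whose measure can be made as small as we want by choosing a truncation level, so that the critical contribution is absorbed into the left-hand side via Sobolev's inequality.
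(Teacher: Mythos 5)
Your outline follows essentially the same route as the paper: a Brezis--Kato type iteration to get $u_0\in L^\infty(\Omega)$, then a global regularity result for the mixed Dirichlet problem to reach $C^{1,\alpha}(\overline{\Omega})$. The paper simply cites \cite[Theorem 1.1]{SVWZ2} as a black box for the $L^\infty$ step (that theorem is exactly the Moser/Brezis--Kato argument you sketch, so re-deriving it is fine), and then invokes \cite[Theorem 1.1]{AC2}, which delivers $C^{1,\alpha}(\overline{\Omega})$ for weak solutions of $\LL u = g$, $u=0$ in $\R^n\setminus\Omega$, as soon as $g\in L^\infty(\Omega)$.

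Your intermediate step --- first deducing $u_0\in C^{0,\beta}(\overline\Omega)$ from \cite{SVWZ2} and then arguing that the source $f(\cdot,u_0)$ is H\"older so that \cite{AC2} applies --- is both unnecessary and not fully justified. It is unnecessary because \cite[Theorem 1.1]{AC2} only needs the right-hand side to be in $L^\infty(\Omega)$, which you already have once $u_0\in L^\infty(\Omega)$ is established. And it is not justified at the stated level of generality: $f$ is only assumed Carath\'eodory, i.e.\ measurable in $x$ with no continuity in $x$ imposed, so even with $u_0$ H\"older the composition $x\mapsto f(x,u_0(x))$ need not be H\"older, nor even continuous. (For the model nonlinearity $f(x,t)=\lambda t^p+t^{2^*-1}$ your argument would go through since $f$ is independent of $x$, but the theorem is stated for general Carath\'eodory $f$ satisfying \eqref{eq:growth_on_f}.) Dropping the intermediate H\"older upgrade and applying \cite[Theorem 1.1]{AC2} directly with the $L^\infty$ source removes this issue and reproduces the paper's proof.
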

  \begin{proof}
   First of all we observe that, since the function $f$ satisfies the growth condi\-tion
   \eqref{eq:growth_on_f}, we can apply \cite[Theorem 1.1]{SVWZ2},
   ensuring that $u_0\in L^\infty(\Omega)$; from this, again by exploiting
   condition \eqref{eq:growth_on_f}, we derive that 
   \begin{equation}  \label{eq:NemBounded}
    g(x) := f(x,u(x))\in L^\infty(\Omega).
   \end{equation}
   On account of \eqref{eq:NemBounded}, and since $\Omega$ has smooth boundary, we can then invoke the
   \emph{global regularity result} for the weak solutions of the 
   \emph{$\LL$\,-\,Dirichlet problem}
   $$\begin{cases}
   \LL u = g & \text{in $\Omega$} \\
   u = 0 & \text{in $\R^n\setminus\Omega$}
   \end{cases}$$
   proved in \cite[Theorem 1.1]{AC2},
   from which we derive \eqref{eq:C1regulCozzi}. This ends the proof. 
  \end{proof}
  
To make the paper self-contained, we finally state a strong maximum principle and Hopf Lemma which is a mere combination of results already appearing in the literature, see \cite{AC2, BDVV, BMV}.
  
  \begin{theorem} \label{thm:WMPSMPHopfTogether}
   Let $u\in\mathcal{X}^{1,2}(\Omega)\cap C^1(\overline{\Omega})$ be a weak
   supersolution of
   \begin{equation} \label{eq:pbZeroWMPSMP}
    \begin{cases}
   \LL u = 0 & \text{in $\Omega$}, \\
   u = 0 & \text{in $\R^n\setminus\Omega$}.
   \end{cases}
   \end{equation}
   We also assume that $u\not\equiv 0$ in $\Omega$. Then
   \begin{itemize}
    \item[i)] $u > 0$ pointwise in $\Omega$ and $u = 0$ pointwise in $\de\Omega$;
    \item[ii)] $\de_\nu < 0$ pointwise on $\de\Omega$ 
    \emph{(}where $\nu$ is the outer exterior normal at $\de\Omega$\emph{)}.
   \end{itemize}
  \end{theorem}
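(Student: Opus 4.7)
The plan is to combine three standard ingredients---the weak maximum principle, the strong maximum principle, and a Hopf-type boundary-point lemma---all of which are already available in the literature for $\LL$ (in the case $\e=1$) in \cite{BDVV, BMV, AC2}. Since $\LL$ differs from $\mathcal{L}_1$ only by a positive rescaling of the nonlocal contribution, the qualitative arguments transfer verbatim to every $\e\in(0,1]$.

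To obtain $u\geq 0$ a.e.\ in $\Omega$, I would test the weak-supersolution inequality with $\varphi=u_-\in\mathcal{X}^{1,2}_+(\Omega)$, which is admissible thanks to Remark \ref{rem:spaceX12}\,(1) together with a standard density argument. The local contribution equals $-\||\nabla u_-|\|_{L^2(\R^n)}^2$ because $\nabla u_+$ and $\nabla u_-$ have disjoint supports; for the nonlocal contribution, the pointwise identity $u=u_+-u_-$ combined with $u_+u_-\equiv 0$ yields
\[
(u(x)-u(y))\bigl(u_-(x)-u_-(y)\bigr)\leq -\bigl(u_-(x)-u_-(y)\bigr)^2.
\]
Summing the two, $0\leq \mathcal{B}_\e(u,u_-)\leq -\rho_\e(u_-)^2$, hence $\rho_\e(u_-)=0$ and $u_-\equiv 0$; the $C^1(\overline\Omega)$-regularity of $u$ then promotes this to $u\geq 0$ pointwise on $\overline\Omega$.

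Once nonnegativity is secured, since $u\not\equiv 0$ in $\Omega$, the strong maximum principle for $\LL$ proved in \cite{BDVV, BMV} upgrades the conclusion to $u>0$ pointwise in $\Omega$. The vanishing $u\equiv 0$ on $\de\Omega$ is then immediate from the continuity of $u$ on $\R^n$ (a consequence of $u\in C^1(\overline\Omega)$ together with $u\equiv 0$ outside $\Omega$), completing (i). For (ii), having $u>0$ in $\Omega$, $u=0$ on $\de\Omega$ and $u\in C^1(\overline\Omega)$, I would invoke the Hopf-type boundary-point lemma for the mixed operator proved in \cite{AC2, BDVV}, which directly yields $\de_\nu u(x_0)<0$ at each $x_0\in\de\Omega$. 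I do not expect any real obstacle: the only (mild) point is to verify that the SMP and Hopf lemma in those references apply to \emph{supersolutions}---not merely solutions---of $\LL u=0$ under the regularity assumed here; but the relevant proofs only rely on $u\geq 0$, $u\not\equiv 0$, weak super-solutionality, and the smoothness of $\de\Omega$ to construct a suitable barrier, so this is routine.
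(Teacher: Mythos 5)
Your proof is correct and follows essentially the same path as the paper: establish nonnegativity via a weak maximum principle for $\LL$, then upgrade to strict positivity and the Hopf boundary inequality by invoking the strong maximum principle of \cite{BMV} and the Hopf lemma of \cite{AC2}. The only stylistic difference is that you prove the first step directly by testing with $u_-$ (using the correct elementary inequality $(u(x)-u(y))(u_-(x)-u_-(y))\leq -(u_-(x)-u_-(y))^2$), whereas the paper simply cites the weak maximum principle from \cite[Theorem 1.2]{BDVV}; both are fine, and your closing remark about checking that the cited SMP/Hopf results apply to supersolutions is an appropriate, and indeed satisfied, caveat.
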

  \begin{proof}
  We first observe that, since $u\in\mathcal{X}^{1,2}(\Omega)\cap C^1(\overline{\Omega})$, by 
  Remark \ref{rem:spaceX12} we have 
  \begin{equation} \label{eq:ugeqzeroWMP}
   \text{$u\in H^1(\R^n)$ and $u\equiv 0$ pointwise in $\R^n\setminus\Omega$};
   \end{equation}
  moreover, since $u$ is a weak supersolution of \eqref{eq:pbZeroWMPSMP}, we also have
  \begin{equation} \label{eq:LLugeqzeroWMP}
  \mathcal{B}_\varepsilon(u,\varphi) \geq 0\quad 
  \forall\,\,\varphi\in C_0^\infty(\Omega),\,\text{$\varphi\geq 0$ in $\Omega$}.
 \end{equation}
  Gathering \eqref{eq:ugeqzeroWMP}-\eqref{eq:LLugeqzeroWMP}, 
 we are then entitled to apply the Weak Maximum Principle for $\LL$
 in \cite[Theorem 1.2]{BDVV}, ensuring that $u\geq 0$ pointwise in $\R^n$; as a consequence, 
 since $u\geq 0,\,u\not\equiv 0$ in $\Omega$ and since $u\equiv 0$ on 
 $\R^n\setminus\Omega$ (see \eqref{eq:ugeqzeroWMP}), by
  combining the Strong Maximum Principle in \cite[Theorem 3.1]{BMV} (applied here with $f\equiv 0$)
  and the Hopf lemma in \cite[Theorem 1.2]{AC2} (remind that $u\in C^1(\overline{\Omega})$), we conclude that
  $$\text{$u > 0$ pointwise in $\Omega$}\quad\text{and}\quad
   \text{$\de_{\nu}u < 0$ on $\de\Omega$}.$$
   This ends the proof.
  \end{proof}
    \begin{lemma}\label{lem:bound_indip_eps}
    Let $n >3$ and $h = h(x)\in L^{p}(\Omega)$ with $p>\tfrac{n}{2}$. 
    Assume that there exists a weak solution 
    $u\in \mathcal{X}^{1,2}(\Omega)$ of the Dirichlet problem
    \begin{equation*}
    \left\{ \begin{array}{rl}
    \LL u = h & \textrm{in } \Omega,\\
    u=0 & \textrm{in } \mathbb{R}^{n}\setminus \Omega.
    \end{array}\right.
    \end{equation*}
    Then $u\in L^{\infty}(\mathbb{R}^{n})$ and 
    \begin{equation*}
    \|u\|_{L^{\infty}(\mathbb{R}^{n})} \leq C\, \|h\|_{L^{p}(\Omega)},
    \end{equation*}
    \noindent for some positive constant $C>0$ independent of $\varepsilon$.
\begin{proof}
It is enough to closely follow the proof of \cite[Theorem 4.7]{BDVV} in the case 
$\varepsilon=1$. Take $\delta>0$ to be chosen later on and define the functions
\begin{equation*}
\tilde{u}:= \dfrac{\delta \,u}{K} \quad \textrm{and } \quad \tilde{v}:=\dfrac{\delta \, h}{K},
\end{equation*}
\noindent where $K = K(u,h,\Omega,p,n) := \|u\|_{L^{2^{\ast}}(\Omega)} + \|h\|_{L^{p}(\Omega)}$. One can now perform the same computations made in \cite[Theorem 4.7]{BDVV}, getting rid of the nonlocal part thanks to \cite[Equation (4.47)]{BDVV}, reaching the following:
\begin{equation}\label{eq:4.54_BDVV}
u(x) \leq \dfrac{K}{\delta} = \dfrac{\|u\|_{L^{2^{\ast}}(\Omega)} + \|h\|_{L^{p}(\Omega)}}{\delta}, \quad \textrm{for every } x\in \Omega,
\end{equation}
\noindent where $\delta>0$ has been chosen conveniently small but independently of $\varepsilon$.
Now, by Sobolev and H\"{o}lder inequalities, and testing the equation with $u$ itself, we get
\begin{equation*}
\mathcal{S}_{n} \, \|u\|^{2}_{L^{2^{\ast}}(\Omega)} \leq |\Omega|^{1/q} \, \|u\|_{L^{2^{\ast}}(\Omega)}\,\|h\|_{L^{p}(\Omega)},
\end{equation*}
\noindent where $\tfrac{1}{2^{\ast}}+\tfrac{1}{p}+\tfrac{1}{q}=1$ are the H\"{o}lder exponents. Therefore,
\begin{equation*}
\|u\|_{L^{2^{\ast}}(\Omega)} \leq \dfrac{|\Omega|^{1/q}}{\mathcal{S}_{n}} \, \|h\|_{L^{p}(\Omega)},
\end{equation*}
\noindent which combined with \eqref{eq:4.54_BDVV} gives the desired conclusion.
\end{proof}    
    \end{lemma}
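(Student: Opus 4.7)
The plan is to adapt the De Giorgi--Stampacchia/Moser iteration carried out in \cite[Theorem 4.7]{BDVV} for the case $\varepsilon=1$, checking that every constant appearing in the estimates depends only on $n$, $s$, $p$ and $|\Omega|$, never on $\varepsilon\in(0,1]$. The algebraic fact making this uniformity possible is that whenever $\varphi$ is an admissible truncation of $u$ (typically $\varphi=(u-k)_+$ or a Moser-type power of $u$), the standard pointwise inequality
\[
(u(x)-u(y))\bigl(\varphi(x)-\varphi(y)\bigr)\geq|\varphi(x)-\varphi(y)|^2\geq 0
\]
entails $\langle u,\varphi\rangle_{s,\R^n}\geq 0$, so that the nonlocal term $\varepsilon\langle u,\varphi\rangle_{s,\R^n}$ inside $\mathcal{B}_\varepsilon(u,\varphi)$ can be dropped without altering the direction of the inequality. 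This is precisely the content of \cite[Equation (4.47)]{BDVV}, and it is the mechanism by which the specific value of $\varepsilon\in(0,1]$ drops out of all subsequent estimates.

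First I would introduce the normalized pair
\[
\tilde u:=\frac{\delta u}{K},\qquad \tilde v:=\frac{\delta h}{K},\qquad K:=\|u\|_{L^{2^\ast}(\Omega)}+\|h\|_{L^{p}(\Omega)},
\]
with $\delta>0$ to be chosen small and depending only on $n,s,p,|\Omega|$. Then $(\tilde u,\tilde v)$ solves $\mathcal{L}_\varepsilon\tilde u=\tilde v$ weakly in $\Omega$ with $\|\tilde u\|_{L^{2^\ast}(\Omega)},\,\|\tilde v\|_{L^{p}(\Omega)}\leq\delta$. Testing this equation against $(\tilde u-k)_+\in\mathcal{X}^{1,2}(\Omega)$ for $k\geq 1$ and discarding the non-negative nonlocal contribution reduces the problem to a purely local inequality on the superlevel sets $A_k:=\{x\in\Omega:\tilde u(x)>k\}$. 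A combination of the Sobolev inequality, H\"older's inequality (the assumption $p>n/2$ is what ensures a super-linear recursion) and Stampacchia's nonlinear iteration lemma then yields $\tilde u\leq 1$ a.e.\ in $\Omega$, i.e.\ $u\leq K/\delta$ pointwise, with $\delta$ independent of $\varepsilon$.

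Finally I would close the loop by controlling $\|u\|_{L^{2^\ast}(\Omega)}$ in terms of $\|h\|_{L^p(\Omega)}$ alone. Testing the equation against $u$ itself and exploiting again the positivity of the fractional form on $u$ gives
\[
\mathcal{S}_n\,\|u\|_{L^{2^\ast}(\Omega)}^2\leq\int_{\R^n}|\nabla u|^2\,dx\leq \mathcal{B}_\varepsilon(u,u)=\int_\Omega hu\,dx\leq|\Omega|^{1/q}\|h\|_{L^{p}(\Omega)}\|u\|_{L^{2^\ast}(\Omega)},
\]
with $q$ determined by $1/2^\ast+1/p+1/q=1$; dividing by $\|u\|_{L^{2^\ast}(\Omega)}$ and plugging the bound into $u\leq K/\delta$ produces the announced $\varepsilon$-uniform inequality $\|u\|_{L^\infty(\R^n)}\leq C\|h\|_{L^p(\Omega)}$.

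The delicate point, which I expect to be the main technical obstacle, is the careful bookkeeping in the Moser iteration: one must verify that every occurrence of $\mathcal{B}_1(u,\varphi)$ in \cite[Theorem 4.7]{BDVV} can be replaced by $\mathcal{B}_\varepsilon(u,\varphi)$ at no cost, which boils down to checking that each test function used in that proof belongs to the class for which the fractional bilinear form is non-negative. Since both the Stampacchia truncations $(u-k)_+$ and the Moser-type powers meet this requirement, the argument goes through verbatim and every constant is seen to depend only on $n,s,p,|\Omega|$, as claimed.
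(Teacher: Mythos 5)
Your proposal is correct and takes essentially the same route as the paper: both normalize via $\tilde u=\delta u/K$, $\tilde v=\delta h/K$ with $K=\|u\|_{L^{2^*}(\Omega)}+\|h\|_{L^p(\Omega)}$, drop the nonlocal contribution of $\mathcal{B}_\varepsilon$ using the positivity identity from \cite[Eq.\,(4.47)]{BDVV} to obtain $u\leq K/\delta$ with an $\varepsilon$-independent $\delta$, and then close the estimate by testing with $u$ itself together with Sobolev and H\"older to bound $\|u\|_{L^{2^*}(\Omega)}$ by $\|h\|_{L^p(\Omega)}$. The extra exposition you give on why the truncation test functions make the fractional form non-negative is accurate but is already the content of the cited step in \cite{BDVV}.
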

    
 \noindent\textbf{iv)} {\bf First positive solution to \eqref{eq:Problem}$_{\lambda}$.} 
 We conclude this section of preliminaries by 
 spending a few words on the applicability of Theorem \ref{thm:ALTRI}
 (which is proved in the case when $\e = 1$)
 to our operator  $\LL$.
 \vspace*{0.1cm}
 
 In the case $\varepsilon =1$, the existence of a first positive solution is proved in \cite{AMT} by means of a classical sub and supersolution scheme that can be performed in the same way even replacing $\mathcal{L}_{1}$ with $\LL$. In particular, one can define
 \begin{equation}\label{eq:defLambda_epsilon}
 \Lambda_{\varepsilon}:= \sup \left\{\lambda >0: \eqref{eq:Problem}_{\lambda} \textrm{ admits a weak solution }\right\}.
 \end{equation}
 One then has to show what follows:
 \begin{itemize}
 \item[a)] $\Lambda_{\varepsilon}$ is well defined and $\Lambda_{\varepsilon}<+\infty$;
 \item[b)] problem \eqref{eq:Problem}$_{\lambda}$ admits a weak solution for every $0<\lambda\leq \Lambda_{\varepsilon}$.
 \end{itemize}
 For every {\it fixed} $0<\e\leq 1$,
 assertions a)\,-\,b) can be proved exactly as in \cite{AMT}. In particular, following the proof of \cite[Theorem 1.1]{AMT}, where the authors used a recent result in critical point theory proved in \cite{Moameni}, there exists $\mu_{\sharp,\e}>0$ (a priori depending on $\varepsilon$ in a not explicit way) such that a first solution 
 of \eqref{eq:Problem}$_\lambda$ (for $0<\lambda<\mu_{\sharp,\e}$) is obtained as a critical point over the set
\begin{equation*}
M = \left\{ v\in \mathcal{X}^{1,2}(\Omega): \|v\|_{L^{\infty}(\Omega)}\leq r, u\geq 0 \textrm{ a.e. in } \Omega \right\}.
 \end{equation*} 
 This is the reason why we need the following
 
 \begin{lemma} \label{lem:existencemusharp}
 There exists $\mu_{\sharp}>0$ independent of $\varepsilon$ such that \eqref{eq:Problem}$_{\lambda}$ possesses a minimal weak solution $\bar u_{\lambda,\e}\in \mathcal{X}^{1,2}(\Omega)\cap L^{\infty}(\Omega)$ for every $0<\lambda\leq\mu_{\sharp}$ and every $\e\in(0,1)$, which is increasing w.r.t.\,$\lambda$ and further satisfying that
 \begin{equation} \label{eq:minimalLinf}
  \|\bar u_{\lambda,\e}\|_{L^{\infty}(\Omega)}\ \leq C,
  \end{equation}
 \noindent where $C>0$ is independent of $\varepsilon$.
 \end{lemma}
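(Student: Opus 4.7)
The plan is to follow the sub/supersolution scheme of \cite{AMT}, but to replace the (implicitly defined) threshold $\mu_{\sharp,\e}$ coming from the critical-point argument there by an explicit threshold obtained from a torsion-type supersolution, whose $L^{\infty}$ size is controlled \emph{independently of} $\e$ thanks to Lemma \ref{lem:bound_indip_eps}. This will simultaneously give the uniform $L^\infty$ bound \eqref{eq:minimalLinf}.

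\textbf{Step 1 (uniform supersolution).} Let $v_\e\in\mathcal{X}^{1,2}(\Omega)$ be the unique weak solution of the linear Dirichlet problem $\LL v_\e=1$ in $\Omega$, $v_\e\equiv 0$ in $\R^n\setminus\Omega$. By Theorem \ref{thm:WMPSMPHopfTogether} we have $v_\e>0$ in $\Omega$, and by Lemma \ref{lem:bound_indip_eps} (applied with $h\equiv 1$ and any $p>n/2$) there exists $M>0$ independent of $\e$ such that $\|v_\e\|_{L^\infty(\R^n)}\leq M$. For a constant $C_0>0$ to be chosen, set $\bar U_\e:=C_0 v_\e$. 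Then $\bar U_\e$ is a pointwise supersolution of \eqref{eq:Problem}$_\lambda$ whenever
$$C_0=\LL(\bar U_\e)\,\geq\,\lambda(C_0 v_\e)^p+(C_0 v_\e)^{2^*-1},$$
which, using $v_\e\leq M$, is implied by $1\geq\lambda\,C_0^{p-1}M^p+C_0^{2^*-2}M^{2^*-1}$. Choose $C_0>0$ so small that $C_0^{2^*-2}M^{2^*-1}\leq\tfrac12$, and then set $\mu_\sharp:=\tfrac12 C_0^{1-p}M^{-p}$. Both $C_0$ and $\mu_\sharp$ depend only on $n,s,p$ and $|\Omega|$, not on $\e$, and for every $\lambda\in(0,\mu_\sharp]$ the function $\bar U_\e$ is a supersolution of \eqref{eq:Problem}$_\lambda$ satisfying $\|\bar U_\e\|_\infty\leq C_0 M=:C$, again $\e$-independent.

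\textbf{Step 2 (subsolution and monotone iteration).} As a subsolution I take the unique positive solution $w_{\lambda,\e}\in\mathcal{X}^{1,2}(\Omega)$ of the purely sublinear problem $\LL w=\lambda w^p$ in $\Omega$, $w\equiv 0$ outside (produced in Section \ref{sec:sublinear}); trivially $\LL w_{\lambda,\e}=\lambda w_{\lambda,\e}^p\leq\lambda w_{\lambda,\e}^p+w_{\lambda,\e}^{2^*-1}$. Moreover, the same estimate from Step 1 shows that, under the same threshold $\mu_\sharp$, $\bar U_\e$ is a supersolution of the sublinear problem as well, so comparison gives $w_{\lambda,\e}\leq\bar U_\e$. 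Since $g(t):=\lambda t^p+t^{2^*-1}$ is nondecreasing on $[0,\infty)$, the classical monotone iteration $\LL u_{k+1}=g(u_k)$ in $\Omega$, $u_{k+1}\equiv 0$ outside, started at $u_0:=w_{\lambda,\e}$, is well-defined (each step being a linear Dirichlet problem with bounded right-hand side) and produces a nondecreasing sequence trapped between $w_{\lambda,\e}$ and $\bar U_\e$, using Theorem \ref{thm:WMPSMPHopfTogether} on successive differences. The pointwise limit $\bar u_{\lambda,\e}$ lies in $\mathcal{X}^{1,2}(\Omega)\cap L^\infty(\Omega)$, solves \eqref{eq:Problem}$_\lambda$ weakly by dominated convergence in \eqref{eq:weakformSol}, and inherits $\|\bar u_{\lambda,\e}\|_\infty\leq C$, independently of $\e$.

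\textbf{Step 3 (minimality, monotonicity, and main obstacle).} For minimality, any positive weak solution $\tilde u$ of \eqref{eq:Problem}$_\lambda$ satisfies $\LL\tilde u\geq\lambda\tilde u^p$, hence is a supersolution of the sublinear problem, and comparison (uniqueness for the sublinear problem from \cite{BMV}) yields $\tilde u\geq w_{\lambda,\e}=u_0$; by induction along the iteration $\tilde u\geq u_k$ for every $k$, so $\tilde u\geq\bar u_{\lambda,\e}$. For monotonicity in $\lambda$, if $0<\lambda_1<\lambda_2\leq\mu_\sharp$ then $\bar u_{\lambda_2,\e}$ is itself a supersolution of \eqref{eq:Problem}$_{\lambda_1}$, while $w_{\lambda_1,\e}\leq w_{\lambda_2,\e}\leq\bar u_{\lambda_2,\e}$; repeating the iteration for $\lambda_1$ between these barriers gives $\bar u_{\lambda_1,\e}\leq\bar u_{\lambda_2,\e}$. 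The genuinely delicate point---and the reason this lemma is not a direct quotation of \cite{AMT}---is that every quantitative ingredient (the ceiling $M$, the monotone iteration bounds, the comparison principles and the bound in \eqref{eq:minimalLinf}) must be verified to be $\e$-uniform; this is precisely what the explicit torsion-function construction above, combined with Lemma \ref{lem:bound_indip_eps}, secures.
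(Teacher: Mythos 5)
Your proof is correct, but it takes a genuinely different route from the paper's. The paper's proof is essentially ``proof by reference'': it cites \cite{AMT}, where the minimal solution, its monotonicity in $\lambda$, and the $L^\infty$-bounded solution are obtained via a sub/supersolution scheme combined with a critical-point argument on a convex set (Moameni's theorem), and the paper's only real task is to note that the sole $\e$-dependent ingredient in \cite{AMT} is the a-priori estimate constant $C_\e$ of \eqref{eq:aprioriCe}, which Lemma \ref{lem:bound_indip_eps} makes $\e$-uniform. You instead rebuild the minimal solution from scratch: you construct an explicit $\e$-uniform supersolution $\bar U_\e = C_0 v_\e$ from the torsion function $v_\e$ (the $\e$-independent ceiling $M = \|v_\e\|_{L^\infty}$ again coming from Lemma \ref{lem:bound_indip_eps}), take $w_{\lambda,\e}$ as subsolution, and run a classical monotone iteration between them. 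Both proofs pivot on Lemma \ref{lem:bound_indip_eps}, and the algebraic condition you impose on $C_0$ and $\mu_\sharp$ is, in essence, the same kind of inequality as the property $(\star)$ the paper extracts from \cite{AMT}. What your approach buys is transparency and self-containment (no appeal to critical-point theory on convex sets, and the $L^\infty$ bound \eqref{eq:minimalLinf} is read off directly from $\bar U_\e$); the paper's approach is shorter.

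Two small points of exposition you should tighten. First, in Step 2, ``by dominated convergence in \eqref{eq:weakformSol}'' glosses over the operator part: you need first to observe that the iterates $u_k$ are bounded in $\mathcal{X}^{1,2}(\Omega)$ (test $\LL u_{k+1}=g(u_k)$ against $u_{k+1}$ and use $0\le u_k\le \bar U_\e$), extract a weak limit which equals the monotone pointwise limit, and only then pass to the limit; dominated convergence handles only the nonlinear integral terms. Second, in Steps~2 and~3, the comparison you invoke between the sub/supersolution of the sublinear problem (to deduce $w_{\lambda,\e}\le \bar U_\e$ and $w_{\lambda,\e}\le\tilde u$) is not literally the uniqueness statement of \cite{BMV}; what is needed is the sub/supersolution comparison for nonlinearities with $f(t)/t$ decreasing, which is \cite[Lemma~3.11]{AMT} (the very result the paper invokes for the same purpose). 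Neither of these affects the correctness of the scheme.
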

\begin{proof}
As already discussed, Theorem \ref{thm:ALTRI}
applies to our problem \eqref{eq:Problem}$_{\lambda}$, the role of $\e$ being immaterial;
as a consequence, 
there exists $\Lambda_\e > 0$ such that problem
  \eqref{eq:Problem}$_{\lambda}$ possesses a \emph{minimal weak 
  solution}
  $$\bar u_{\lambda,\e}\in\mathcal{X}^{1,2}(\Omega)$$
  for every $0<\lambda\leq \Lambda_\e$; moreover, if $0<\lambda_1<\lambda_2<\Lambda_\e$, we have
  $$\text{$\bar u_{\lambda_1,\e}\leq \bar u_{\lambda_2,\e}$ a.e.\,in $\Omega$}.$$  
  We claim that there exists a constant $\lambda_{\sharp} > 0$,
  independent of $\e\in (0,1)$, such that
  \begin{equation} \label{eq:existencemusharp}
   \Lambda_\e\geq \lambda_\sharp.
  \end{equation}
  Indeed, let $\e\in (0,1)$ be fixed. By arguing as in the proof of
  \cite[Theorem 1.2]{AMT} we see that there exists
  $\Lambda_\e\geq \mu_{\sharp,\e}$, where $\mu_{\sharp,\e} > 0$ 
  satisfies the following property
  (see, precisely, the proof of \cite[Lemma 3.9]{AMT}):
  \begin{equation*}
   \begin{gathered}
   (\star)\qquad\quad\text{for every $0<\lambda<\lambda_{\sharp,\e}$ 
   there exist $0<r_1<r_2$ such that} \\
      \text{$C_{\e}(r^{{2^*}-2} + \lambda r^{p-1})\leq 1$ for all $r\in [r_1,r_2]$};   
   \end{gathered}
  \end{equation*}
  here, $C_\e > 0$ is a positive constant such that, for every $h\in L^p(\Omega)$ (with $p > n/2$)
  and every weak solution
  $u\in\mathcal{X}^{1,2}(\Omega)$ of the problem
  \begin{equation} \label{eq:pdDirCe}
   \begin{cases}
   \LL u = h & \text{in $\Omega$} \\
   u = 0 & \text{in $\R^n\setminus\Omega$}
   \end{cases}
  \end{equation}
  we have the \emph{a\,-\,priori estimate}
  \begin{equation} \label{eq:aprioriCe}
   \|u\|_{L^\infty(\R^n)}\leq C_\e\|h\|_{L^p(\Omega)}
  \end{equation}
  (the existence of such a constant follows from \cite[Theorem 4.7]{BDVV}). 
  Hence, the lower bound $\lambda_{\sharp,\e}$
  depends on $\e$ \emph{only through the constant $C_\e$ in estimate \eqref{eq:aprioriCe}}.
  On the other hand, by Lemma \ref{lem:bound_indip_eps},
  the constant $C_\e$ can be chosen \emph{indepen\-dently of $\e$}: this means, precisely,
  that there exists $C_0 > 0$ such that
  \begin{equation*}
   \|u\|_{L^\infty(\R^n)}\leq C_0\|h\|_{L^p(\Omega)}
  \end{equation*}
  for every $h\in L^p(\Omega)$ (with $p > n/2$), every weak solution $u\in\mathcal{X}^{1,2}(\Omega)$ of
  \eqref{eq:pdDirCe} \emph{and every $\e\in(0,1]$}. 
  Hence, if we choose $\mu_\sharp > 0$ in such a way that
  $(\star)$ holds with $C_0$ in place of $C_\e$
  (that is, if we choose the \emph{very same constant considered in \cite{AMT}}
  and corresponding to the case $\e = 1$), we conclude that
  \eqref{eq:existencemusharp} is satisfied. 
  \vspace*{0.05cm}
  
  We now turn to prove \eqref{eq:minimalLinf}. To this end we notice that,
  by proceeding exactly as in the proof of \cite[Theorem 1.1]{AMT} 
  (and by recalling that our constant $\mu_\sharp$ is precisely the one considered in \cite{AMT}
  and corresponding to the case $\e = 1$), there exists
  a solution $w_{\lambda,\e}\in\mathcal{X}^{1,2}(\Omega)$ of problem 
  \eqref{eq:Problem}$_\lambda$ satisfying the a-priori estimate
  $$\|w_{\lambda,\e}\|_{L^\infty(\Omega)}\leq r,$$
  where $r\in[r_1,r_2]$ is arbitrarily chosen, and $r_1<r_2$ are as in $(\star)$
  (and they depend on the fixed $\lambda$); 
  thus, by the minimality property
  of $\bar{u}_{\lambda,\e}$ we infer that
  $$0\leq \bar{u}_{\lambda,\e}\leq w_{\lambda,\e}\leq r\quad\text{a.e.\,in $\Omega$},$$
  and this readily gives the desired \eqref{eq:minimalLinf}.
 \end{proof} 
  \begin{remark} \label{rem:upperboundLambdae}
  Arguing exactly as in \cite[Proof of Theorem 1.2]{AMT} one can also realize that there exists $\Lambda_{\star}>0$ (indicated there as $\tilde{\lambda}$), depending on the first Dirichlet eigenvalue of $\mathcal{L}_{1}$ (see e.g. \cite{BDVV3}) but independent of $\varepsilon$, such that 
  $$\Lambda_{\varepsilon}\leq \Lambda_{\star}.$$
\end{remark}
\section{Purely sublinear problems and the proof of Theorem \ref{thm:main}} \label{sec:sublinear}
In this short section we investigate
 the \emph{purely sublinear} problem
\begin{equation}\label{eq:SublinearProblem}
\left\{ \begin{array}{rl}
\LL u  = \lambda \,u^{p}  & \textrm{in } \Omega,\\
u>0 & \textrm{in } \Omega,\\
u= 0 & \textrm{in } \mathbb{R}^{n}\setminus \Omega,
\end{array}\right.
\end{equation}
To be more precise, we establish/review some results which will be used (as funda\-mental tools)
in the proof of Theorem \ref{thm:main2}, and we demonstrate our first main result,
namely Theorem \ref{thm:main}.
\medskip

To begin with, we prove the following proposition.
\begin{proposition}\label{prop:SublinearGeneral}
 Let $\e\in(0,1]$ be fixed. Moreover, let $\lambda > 0$ and $p\in(0,1)$.
 
 Then, the following assertions hold.
 \begin{itemize}
  \item[i)] There exists a \emph{unique weak solution} $w_{\lambda,\e} 
  \in \mathcal{X}^{1,2}(\Omega)$
  of problem \eqref{eq:SublinearProblem}, which is the unique \emph{global minimizer of the functional}
  \begin{equation} \label{eq:funJlambdaSub}
   J_{\lambda,\e}(u) = 
  \frac{1}{2}\rho_\e(u)^2-\frac{\lambda}{p+1}\int_\Omega|u|^{p+1}\,dx.
  \end{equation}
  Moreover, $J_{\lambda,\e}(w_{\lambda,\e}) < 0$.
  \medskip
  
  \item[ii)]  $w_{\lambda,\e}\in C^{1,\alpha}(\overline{\Omega})$ for every $\alpha\in (0,1)$, and
  $\de_\nu w_{\lambda,\e} < 0$ on $\de\Omega$.
  \medskip
  
  \item[iii)] There exists $\e_0 > 0$
  with the following property:
  \emph{given any ball $B_{R}(x_0)\subseteq\Omega$ and any $0<r\leq \min\{1,R\}$, for every $0<\e\leq\e_0$
  we have}
  $$w_{\lambda,\e}\geq c_2\quad\text{a.e.\,on $B_r(x_0)$},$$
  for some constant $c_2 > 0$ independent of $\e$.
 \end{itemize}
 \end{proposition}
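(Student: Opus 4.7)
The plan is to treat the three parts separately, each one relying on a different set of tools already available.

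\emph{Part (i).} I would apply the direct method in the Hilbert space $\mathcal{X}^{1,2}(\Omega)$. By the uniform equivalence $\rho_\e\asymp\|\cdot\|_{H^1_0}$ from \eqref{eq:equivalenceuniforme} and the Sobolev embedding $\mathcal{X}^{1,2}(\Omega)\hookrightarrow L^{p+1}(\Omega)$ (with $p+1<2<2^*$), one has the coercivity estimate
\[
J_{\lambda,\e}(u)\geq \tfrac12\rho_\e(u)^2-C\rho_\e(u)^{p+1}.
\]
The quadratic part is weakly lower semicontinuous, while the nonlinear part is strongly continuous on weakly convergent sequences thanks to the compactness of $\mathcal{X}^{1,2}(\Omega)\hookrightarrow L^{p+1}(\Omega)$; a global minimizer $w_{\lambda,\e}$ exists. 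Since $\rho_\e(|u|)\leq \rho_\e(u)$, we may take $w_{\lambda,\e}\in \mathcal{X}^{1,2}_+(\Omega)$, and the Euler-Lagrange equation shows it is a weak solution of \eqref{eq:SublinearProblem}. The strict negativity $J_{\lambda,\e}(w_{\lambda,\e})<0$ is obtained by testing with $t\phi$ for $\phi\in C^\infty_c(\Omega)\cap\mathcal{X}^{1,2}_+(\Omega)\setminus\{0\}$: since $p+1<2$, the term $-\tfrac{\lambda t^{p+1}}{p+1}\|\phi\|_{L^{p+1}}^{p+1}$ dominates for $t>0$ small. Uniqueness of the positive weak solution is the Brezis-Oswald argument for the operator $\LL$, already established in \cite{BMV}: for two positive solutions $w_1,w_2$ one tests the respective equations with $(w_2^2-w_1^2)/w_1$ and $(w_1^2-w_2^2)/w_2$ (both admissible by the Hopf behaviour established in (ii)) and concludes using that $t\mapsto t^{p-1}$ is strictly decreasing for $p<1$.

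\emph{Part (ii).} Since $f(x,t)=\lambda(t_+)^p$ is Carathéodory and satisfies \eqref{eq:growth_on_f} trivially (as $p<1<2^*-1$), Theorem \ref{thm:regulAntCozzi} yields $w_{\lambda,\e}\in C^{1,\alpha}(\overline{\Omega})$ for some $\alpha\in(0,1)$. To cover every $\alpha\in(0,1)$ I would bootstrap: by the strong maximum principle $w_{\lambda,\e}>0$ in $\Omega$, so $\lambda w_{\lambda,\e}^p$ is Hölder continuous on compact subsets, and the global Schauder-type regularity of \cite{AC2} propagates the exponent. The Hopf property $\de_\nu w_{\lambda,\e}<0$ on $\de\Omega$ is immediate from Theorem \ref{thm:WMPSMPHopfTogether}, because $\LL w_{\lambda,\e}=\lambda w_{\lambda,\e}^p\geq 0$ makes $w_{\lambda,\e}$ a nontrivial supersolution of \eqref{eq:pbZeroWMPSMP}.

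\emph{Part (iii).} This is the substantive part and the principal obstacle, as it requires $\e$-uniform estimates. My plan is to prove the stronger statement that $w_{\lambda,\e}\to w_{\lambda,0}$ uniformly on $\overline{\Omega}$ as $\e\to 0^+$, where $w_{\lambda,0}\in H^1_0(\Omega)\cap C^{1,\alpha}(\overline{\Omega})$ is the unique positive solution of the purely local problem $-\Delta u=\lambda u^p$ in $\Omega$, $u=0$ on $\de\Omega$. Once this is done, the classical Hopf lemma for $-\Delta$ gives $w_{\lambda,0}\geq 2c_2$ on any ball $B_r(x_0)$ with $B_R(x_0)\Subset\Omega$ and $r\leq\min\{1,R\}$, and uniform convergence promotes this to $w_{\lambda,\e}\geq c_2$ on the same ball for $\e$ small enough. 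The uniform convergence is established in three steps. First, the energy comparison
\[
J_{\lambda,\e}(w_{\lambda,\e})\leq J_{\lambda,\e}(w_{\lambda,0})\leq J_{\lambda,0}(w_{\lambda,0})+\tfrac{\e}{2}[w_{\lambda,0}]_{s,\R^n}^2,
\]
together with the coercivity estimate, gives a bound on $\rho_\e(w_{\lambda,\e})$ uniform in $\e\in(0,1]$. Second, a Moser iteration based on Lemma \ref{lem:bound_indip_eps} (whose constant is independent of $\e$) yields a uniform $L^\infty$-bound. Third, a careful tracking of the constants in the global Hölder estimates of \cite{AC2, SVWZ2} produces a uniform $C^{0,\gamma}(\overline{\Omega})$-bound, after which Ascoli-Arzelà extracts a uniformly convergent subsequence $w_{\lambda,\e_k}\to w^*$ in $C(\overline{\Omega})$. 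Passing to the limit in the weak formulation $\mathcal{B}_\e(w_{\lambda,\e_k},\varphi)=\lambda\int_\Omega w_{\lambda,\e_k}^p\varphi\,dx$ for $\varphi\in C^\infty_c(\Omega)$, the prefactor $\e$ annihilates the nonlocal contribution and identifies $w^*$ as a weak solution of the local problem; uniqueness forces $w^*=w_{\lambda,0}$, and uniqueness of the limit promotes subsequential convergence to convergence of the whole family. The most delicate step is the third: verifying that the constants in the cited regularity results are indeed independent of $\e\in(0,1]$, which is why I expect to rely on the quantitative forms of the estimates in \cite{BDVV, AC2, SVWZ2}.
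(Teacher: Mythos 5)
Parts (i) and (ii) of your argument coincide with what the paper does: the paper simply cites \cite[Theorem 1.2]{BMV} and \cite[Proposition 6.2]{BMV} for existence, uniqueness and the global-minimizer property, and then invokes Theorem \ref{thm:regulAntCozzi} and Theorem \ref{thm:WMPSMPHopfTogether} for regularity and the Hopf property. Your sketch of the direct method and the Brezis--Oswald test functions is the content behind those citations, so there is no substantive difference there.

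For part (iii), however, you take a genuinely different and more demanding route than the paper. The paper's proof has three steps: (a) testing the equation with $w_{\lambda,\e}$ itself gives the $\e$-uniform bound $\|w_{\lambda,\e}\|_{H^1_0(\Omega)}\leq\lambda^{1/(1-p)}c$; (b) combining this with the global-minimizer characterization shows that $w_{\lambda,\e}\to w_\lambda$ \emph{weakly in $\mathcal{X}^{1,2}$ and strongly in $L^m(\Omega)$} for every $m<2^*$, where $w_\lambda$ is the solution of the purely local problem; and (c) it then invokes the $\e$-\emph{uniform weak Harnack inequality} of \cite[Proposition 3.3]{BV2}, namely $w_{\lambda,\e}\geq c\big(\fint_{B_r}w_{\lambda,\e}^Q\big)^{1/Q}$ with $c,Q$ independent of $\e$, and the $L^m$-convergence of the averages to those of $w_\lambda>0$ immediately gives the lower bound. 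Crucially, this argument never needs more than strong $L^m$-convergence and an already-proven Harnack estimate with $\e$-free constants.

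Your plan instead aims for \emph{uniform convergence} $w_{\lambda,\e}\to w_\lambda$ in $C(\overline{\Omega})$ via $\e$-uniform $C^{0,\gamma}(\overline{\Omega})$ bounds and Ascoli--Arzel\`a. This is plausible in spirit, but the pivotal step --- that the global H\"older constants in \cite{AC2,SVWZ2} can be taken independent of $\e\in(0,1]$ for the operator $-\Delta+\e(-\Delta)^s$ --- is asserted, not proved, and you flag it yourself as the delicate point. Those references treat $\e=1$, and extracting $\e$-uniformity requires re-examining the proofs rather than simply citing the statements; this is exactly the kind of re-derivation the paper avoids by reaching for the Harnack inequality instead. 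A second (minor) technical point: the use of Lemma \ref{lem:bound_indip_eps} as a stepping stone to the uniform $L^\infty$ bound is not immediate when $n$ is large, since $w_{\lambda,\e}^p\in L^{2^*/p}$ only has $2^*/p>n/2$ when $p<4/(n-2)$, so a genuine iteration would be required. In summary: your steps 1 and 2 are parallel to the paper's steps 1 and 2, but your step 3 replaces an available $\e$-uniform Harnack inequality with an $\e$-uniform Schauder theory that is not available off the shelf, and closing that gap is the real work.
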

 We explicitly stress that, since problem \eqref{eq:SublinearProblem} is of the form
 \eqref{eq:pbgeneral} {(}with $f = \lambda t^p${)}, the definition of weak solution of 
 \eqref{eq:SublinearProblem}
 is given in Definition \ref{def:weaksol}.
 \begin{proof}
 We prove separately the three assertions.
 \medskip
 
 \noindent i)\,\,To begin with, by using \cite[Theorem 1.2]{BMV} with the choice
 $f(t) = \lambda t^p$ (notice that this theorem certainly
 applies to the operator $\LL$, the role of $\e$ being immaterial)
 we immediately get the existence of a \emph{unique weak solution} 
 $$w_{\lambda,\e}\in\mathcal{X}^{1,2}(\Omega)\cap L^\infty(\Omega)$$
 of problem \eqref{eq:SublinearProblem}; moreover, owing to
 \cite[Proposition 6.2]{BMV} (which also
 can be applied to the $\e$\,-\,dependent operator $\LL$), we know that
 this $w_{\lambda,\e}$ satisfies
 \begin{equation} \label{eq:globalminwlambdaJ}
  J_{\lambda,\e}(w_{\lambda,\e}) = 
  \min_{\varphi\in\mathcal{X}^{1,2}(\Omega)}J_{\lambda,\e}(\varphi)\quad\text{and}\quad
 J_{\lambda,\e}(w_{\lambda,\e}) < 0.
\end{equation}
 
 \noindent ii)\,\,Since $f(t) = \lambda t^p$ clearly satisfies 
 the growth condition \eqref{eq:growth_on_f} (with $C_f = 1$),
 the regularity up the boundary of $w_{\lambda,\e}$ follows
 from Theorem \ref{thm:regulAntCozzi}; this, jointly with the fact that $w_{\lambda,\e} > 0$
 in $\Omega$, allows us to apply \ref{thm:WMPSMPHopfTogether}, obtaining
 $$\text{$\de_{\nu}w_{\lambda,\e} < 0$ pointwise on $\de\Omega$}.$$
 
 \noindent iii)\,\,The proof of this assertion
 is split into three steps, and it
 is very similar to that of
 \cite[Proposition 3.7]{BV2}; we present it in detail
 for the sake of completeness. 
 \medskip 
 
 \textsc{Step 1).} In this first step, we prove that 
 \begin{equation} \label{eq:wlambdaeunfH01}
  \|w_{\lambda,\e}\|_{H^1_0(\Omega)}\leq \lambda^{\frac{1}{1-p}}\,c\quad\text{for all $\e\in(0,1]$}.
 \end{equation}
 for some constant $c > 0$ only depending on $\Omega$ and on $p$.
 
  Indeed, since $w_{\lambda,\e}\in\mathcal{X}^{1,2}(\Omega)$ 
 is a weak solution
 of problem \eqref{eq:SublinearProblem}  (in the sense of
 De\-fi\-nition \ref{def:weaksol}), by choosing $w_{\lambda,\e}$ as a test
 function in \eqref{eq:weakformSol}, we get
  \begin{align*}
    \|w_{\lambda,\e}\|_{H^1_0(\Omega)}^2 &
    \leq \rho_\e(w_{\lambda,\e})^2
    = \frac{\lambda}{p+1	}\int_{\Omega}w_{\lambda,\e}^{p+1}\,dx \\
    & (\text{using H\"older's and Poincar\'e inequality}) \\   
    & \leq \frac{\lambda|\Omega|^{1-\frac{p+1}{2}}}{p+1}\|w_{\lambda,\e}\|^{p+1}
    \leq \lambda\,c(\Omega,p)\|w_{\lambda,\e}\|^{p+1}_{H^1_0(\Omega)},
  \end{align*}
  where $c(\Omega,p) > 0$ is a suitable constant only depending on $\Omega$ and on $p$.
  From this, since $w_{\lambda,\e}\not\equiv 0$ (and since $0<p<1$), we immediately derive the
  claimed \eqref{eq:wlambdaeunfH01}.
  \medskip
  
  \textsc{Step 2).} In this second step we prove that, as $\e\to 0^+$, we have
  \begin{itemize}
   \item[a)] $w_{\lambda,\e}\to w_{\lambda}$ weakly in $\mathcal{X}^{1,2}(\Omega)$;
   \item[b)] $w_{\lambda,\e}\to w_{\lambda}$ strongly in $L^m(\Omega)$ for every $1\leq m < 2^*$;
  \end{itemize}
  where $w_{\lambda}\in H^1_0(\Omega)$ is the unique solution of the purely local problem
  \begin{equation} \label{eq:singularsololocale}
   \begin{cases}
 -\Delta u = \lambda u^{p} & \textrm{in $\Omega$},\\
 u>0 & \textrm{in $\Omega$},\\
 u= 0 & \textrm{in $\mathbb{R}^n \setminus \Omega$},
 \end{cases}
 \end{equation}
 To this end, we let $\{\e_j\}_j\subseteq(0,1]$ be any sequence converging to $0$ as $j\to+\infty$, and 
 we arbitrarily choose a subsequence
 $\{\e_{j_k}\}_k$  of $\{\e_j\}_j$. On account of
 \eqref{eq:wlambdaeunfH01}, and using
 Remark \ref{rem:spaceX12}\,-\,5), we know that there exists
 some function $\bar{w}\in\mathcal{X}^{1,2}(\Omega)$ such that, as $k\to+\infty$ and
 up to possibly 
 choosing a further subsequence,
 \begin{itemize}
   \item $w_{\lambda,\e_{j_k}}\to \bar{w}$ weakly in $\mathcal{X}^{1,2}(\Omega)$;
   \item $w_{\lambda,\e_{j_k}}\to \bar{w}$ strongly in $L^m(\Omega)$ for every $1\leq m < 2^*$;
  \end{itemize}
  from this, since $w_{\lambda,\e}$ is a global minimizer for $J_{\lambda,\e}$
  (see \eqref{eq:globalminwlambdaJ}), we get
  \begin{align*}
   J_{\lambda}(\bar{w})
   & = \frac{1}{2}\|\bar{w}\|^2_{H^1_0(\Omega)}
   -\frac{\lambda}{p+1}\int_\Omega |\bar{w}|^{p+1}\,dx
   \\
   & \leq \liminf_{k\to+\infty}
   J_{\lambda,\e_{j_k}}(w_{\lambda,\e_{j_k}}) 
    \\
   & \leq \liminf_{k\to+\infty}
   J_{\lambda,\e_{j_k}}(\varphi) = J_{\lambda}(\varphi)\quad\text{for every $\varphi\in H_0^1(\Omega)$},
  \end{align*}
  and thus $\bar{w}$ is a global minimizer for the functional $J_{\lambda}$
  naturally associated with the purely local, singular problem 
  \eqref{eq:singularsololocale}. As a consequence, since $J_{\lambda}$ as a
  unique global minimizer which is the unique solution $w_\lambda$ of \eqref{eq:singularsololocale}
  (that is, assertion i) also holds in the purely local case, see
  \cite{BO}),
  we derive that
  $$\bar{w} = w_{\lambda}.$$
  Due to arbitrariness of the sequence 
  $\{\e_j\}_j\subseteq(0,1]$ of its subsequence
 $\{\e_{j_k}\}_k$, and since the limit
 \emph{is always the same}, we conclude the validity of a)\,-\,b).
 \medskip
 
 \textsc{Step 3)}\,\,In this last step, we complete the proof of
 the assertion. First of all we observe that, since
 $w_{\lambda,\e}$ is a weak solution of problem \eqref{eq:SublinearProblem},
 we readily derive that $w_{\lambda,\e}$ is a also \emph{weak supersolution}
 (in the sense of Definition \ref{def:weaksol}) of
 $\LL u = 0$ in $\Omega$;
 thus, by \cite[Proposition 3.3]{BV2} (and since $w_{\lambda,\e}\geq 0$ a.e.\,in $\R^n$) we get
 \begin{equation} \label{eq:HarnackwlambdaeI}
  w_{\lambda,\e}\geq c\,\left(\fint_{B_r(x_0)}w_{\lambda,\e}^Q \, dx\right)^{1/Q}\quad\text{a.e.\,on $B_r(x_0)$},
 \end{equation}
 where $Q,\,c > 0$ do not depend on $\e$.
 On the other hand, since by \textsc{Step 2)} we know that $w_{\lambda,\e}\to w_\lambda$
 as $\e\to 0$ strongly in $L^m(\Omega)$ for every $1\leq m <2^*$, we have
 \begin{equation} \label{eq:convergenceMeanwlambdae}
  \lim_{\e\to 0^+}\fint_{B_r(x_0)}w_{\lambda,\e}^Q \, dx = \fint_{B_r(x_0)}w_{\lambda}^Q \, dx.
 \end{equation}
 Gathering \eqref{eq:HarnackwlambdaeI}-\eqref{eq:convergenceMeanwlambdae}, 
 and recalling that $w_\lambda$ is the unique solution of \eqref{eq:singularsololocale} (hence,
 $w_\lambda > 0$ a.e.\,in $\Omega$),
  we can then fined some $\e_0 > 0$ such that
  $$w_{\lambda,\e}\geq \frac{c}{2}\left(\fint_{B_r(x_0)}w_{\lambda}^Q \, dx\right)^{1/Q} > 0$$
  a.e.\,in $B_r(x_0)$ and \emph{for every $0<\e\leq \e_0$}. This ends the proof.
 \end{proof}
 We now turn to prove our first main result, namely Theorem \ref{thm:main2}.
 Before doing this, we establish the following auxiliary lemma.
 \begin{lemma}\label{lem:Lemma3.6_ABC}
 Let $0<\e\leq 1,\,\lambda > 0$ be fixed, and let $w_{\lambda,\e} \in \mathcal{X}^{1,2}(\Omega)$ 
 be the unique solution of problem \eqref{eq:SublinearProblem} 
 \emph{(}whose existence and uniqueness is guaranteed by
 Proposi\-tion \ref{prop:SublinearGeneral}\emph{)}.
 Then there exists a positive constant $\beta = \beta_{\lambda,\varepsilon} >0$ such that
 \begin{equation}\label{eq:Lemma3.6_ABC}
 \rho_{\varepsilon}(\varphi)^2 - \lambda p \int_{\Omega}w_{\lambda,\varepsilon}^{p-1}\varphi \, dx \geq 
 \beta_{\lambda,\e}\,\int_{\Omega}\varphi^2 \, dx, \quad \textrm{for all } \varphi \in \mathcal{X}^{1,2}(\Omega).
 \end{equation}
 \end{lemma}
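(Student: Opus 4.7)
I read \eqref{eq:Lemma3.6_ABC} as the natural coercivity estimate for the second variation of $J_{\lambda,\varepsilon}$ at $w_{\lambda,\varepsilon}$, i.e.\ with $\varphi^2$ in the weighted integral (the linear expression $\varphi$ would clearly be inconsistent with the quadratic scaling of the other two terms). Define
$$Q_\varepsilon(\varphi) := \rho_\varepsilon(\varphi)^2 - \lambda p \int_\Omega w_{\lambda,\varepsilon}^{p-1}\varphi^2\,dx.$$
I would follow the classical Ambrosetti--Brezis--Cerami strategy in three steps: (a) show $Q_\varepsilon \geq 0$ using global minimality; (b) show $\mu_1 := \inf\{Q_\varepsilon(\varphi):\varphi\in\mathcal{X}^{1,2}(\Omega),\|\varphi\|_{L^2}=1\}$ is attained; (c) rule out $\mu_1 = 0$ by a symmetry argument involving $w_{\lambda,\varepsilon}$ itself, and set $\beta_{\lambda,\varepsilon} := \mu_1 > 0$.

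\textbf{Non-negativity (Step a).} Since by Proposition \ref{prop:SublinearGeneral}(i) $w_{\lambda,\varepsilon}$ is the global minimizer of $J_{\lambda,\varepsilon}$ (see \eqref{eq:funJlambdaSub}), Taylor's formula yields, for every $\varphi \in \mathcal{X}^{1,2}(\Omega)$, $\varphi \geq 0$, and every $t > 0$,
$$0 \leq J_{\lambda,\varepsilon}(w_{\lambda,\varepsilon}+t\varphi) - J_{\lambda,\varepsilon}(w_{\lambda,\varepsilon}) = \tfrac{t^2}{2}\bigl[\rho_\varepsilon(\varphi)^2 - \lambda p\int_\Omega (w_{\lambda,\varepsilon}+\tau t\varphi)^{p-1}\varphi^2\,dx\bigr]$$
for some $\tau = \tau(x,t) \in (0,1)$ (the first-order term vanishes by the weak formulation). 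Since $p-1<0$ and $\varphi\geq 0$, the integrand is monotonically increasing as $t\to 0^+$; monotone convergence gives $Q_\varepsilon(\varphi) \geq 0$ for $\varphi \geq 0$. For general $\varphi$, the pointwise identity $|\nabla|\varphi||=|\nabla\varphi|$ and the inequality $||\varphi(x)|-|\varphi(y)||\leq|\varphi(x)-\varphi(y)|$ yield $\rho_\varepsilon(|\varphi|)\leq\rho_\varepsilon(\varphi)$, hence $Q_\varepsilon(|\varphi|)\leq Q_\varepsilon(\varphi)$, and the bound extends to all of $\mathcal{X}^{1,2}(\Omega)$.

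\textbf{Attainment (Step b, the main obstacle).} The delicate point is that $w_{\lambda,\varepsilon}^{p-1}$ is singular at $\partial\Omega$. However, by Proposition \ref{prop:SublinearGeneral}(ii), $w_{\lambda,\varepsilon}\in C^1(\overline\Omega)$ with $\partial_\nu w_{\lambda,\varepsilon}<0$ on $\partial\Omega$, giving $w_{\lambda,\varepsilon}(x)\geq c\,\mathrm{dist}(x,\partial\Omega)$. Combining this with the classical Hardy inequality $\int_\Omega\mathrm{dist}(x,\partial\Omega)^{-2}\psi^2\,dx\leq C\|\nabla\psi\|_{L^2}^2$ for $\psi\in H^1_0(\Omega)$, and interpolating via H\"older with exponents $2/(1-p)$ and $2/(1+p)$, one obtains
$$\int_\Omega w_{\lambda,\varepsilon}^{p-1}\psi^2\,dx \;\leq\; C\,\|\nabla\psi\|_{L^2(\Omega)}^{1-p}\,\|\psi\|_{L^2(\Omega)}^{1+p}\qquad(\psi\in\mathcal{X}^{1,2}(\Omega)).$$
Given a minimizing sequence $\{\varphi_n\}$ with $\|\varphi_n\|_{L^2}=1$ and $Q_\varepsilon(\varphi_n)\to\mu_1$, this estimate forces $\rho_\varepsilon(\varphi_n)^2\leq \mu_1+1+C\,\rho_\varepsilon(\varphi_n)^{1-p}$ for large $n$, which implies boundedness in $\mathcal{X}^{1,2}(\Omega)$ since $1-p<2$. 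Extract a subsequence with $\varphi_n\rightharpoonup\varphi_0$ in $\mathcal{X}^{1,2}(\Omega)$, $\varphi_n\to\varphi_0$ in $L^2(\Omega)$, and apply the same weighted estimate to $\varphi_n-\varphi_0$ to deduce $\int w_{\lambda,\varepsilon}^{p-1}\varphi_n^2\,dx\to\int w_{\lambda,\varepsilon}^{p-1}\varphi_0^2\,dx$. Weak lower semicontinuity of $\rho_\varepsilon(\cdot)^2$ then gives $Q_\varepsilon(\varphi_0)\leq \mu_1$, and since $\|\varphi_0\|_{L^2}=1$, $\varphi_0$ realizes the infimum.

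\textbf{Strict positivity (Step c).} Suppose toward contradiction that $\mu_1 = 0$. The minimizer $\varphi_0$ can be taken non-negative (replace $\varphi_0$ by $|\varphi_0|$), and solves the linearized Euler--Lagrange equation
$$\mathcal{B}_\varepsilon(\varphi_0,v)=\lambda p\int_\Omega w_{\lambda,\varepsilon}^{p-1}\varphi_0\,v\,dx\qquad\forall\,v\in\mathcal{X}^{1,2}(\Omega).$$
Since the right-hand side is non-negative, $\varphi_0$ is a weak supersolution of $\LL\varphi_0=0$, and Theorem \ref{thm:WMPSMPHopfTogether} yields $\varphi_0>0$ in $\Omega$. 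Now test this equation with $v=w_{\lambda,\varepsilon}$ to get $\mathcal{B}_\varepsilon(\varphi_0,w_{\lambda,\varepsilon})=\lambda p\int_\Omega w_{\lambda,\varepsilon}^p\varphi_0\,dx$; test instead the equation for $w_{\lambda,\varepsilon}$ with $v=\varphi_0$ to get $\mathcal{B}_\varepsilon(w_{\lambda,\varepsilon},\varphi_0)=\lambda\int_\Omega w_{\lambda,\varepsilon}^p\varphi_0\,dx$. Symmetry of $\mathcal{B}_\varepsilon$ forces these two to coincide, hence $(1-p)\lambda\int_\Omega w_{\lambda,\varepsilon}^p\varphi_0\,dx=0$, which is impossible since $\lambda>0$, $p\in(0,1)$, and $w_{\lambda,\varepsilon},\varphi_0>0$ in $\Omega$. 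Thus $\mu_1>0$, which proves \eqref{eq:Lemma3.6_ABC} with $\beta_{\lambda,\varepsilon}:=\mu_1$.
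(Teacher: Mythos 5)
Your proof takes the same overall route as the paper: show the second variation of $J_{\lambda,\e}$ at $w_{\lambda,\e}$ is nonnegative, pass to the associated first eigenvalue of $\LL-\lambda p\,w_{\lambda,\e}^{p-1}$, and rule out a zero eigenvalue by the symmetric test trick (testing the linearized equation against $w_{\lambda,\e}$ and the equation for $w_{\lambda,\e}$ against the eigenfunction, forcing $(1-p)\lambda\int w_{\lambda,\e}^p\varphi_0\,dx=0$, hence $p=1$). You also correctly read the middle term with $\varphi^2$ rather than $\varphi$; the printed statement is a typo, as the paper's own display \eqref{eq:SecondVariationJat_v} confirms. The one substantive difference is your Step (b): the paper dispatches attainment of the infimum by citing \cite[Remark 3.1]{ABC}, whereas you give a self-contained argument combining the Hopf lower bound $w_{\lambda,\e}\gtrsim\mathrm{dist}(\cdot,\partial\Omega)$, the Hardy inequality, and H\"older interpolation, which yields the key compactness estimate $\int_\Omega w_{\lambda,\e}^{p-1}\psi^2\,dx\lesssim\|\nabla\psi\|_{L^2}^{1-p}\|\psi\|_{L^2}^{1+p}$. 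This makes the relative $L^2$-compactness of the weighted term explicit in the mixed local-nonlocal setting, which is a useful addition.

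One small point worth tightening in Step (c): you invoke Theorem \ref{thm:WMPSMPHopfTogether} to conclude $\varphi_0>0$ in $\Omega$, but that theorem assumes $\varphi_0\in\mathcal{X}^{1,2}(\Omega)\cap C^1(\overline{\Omega})$, which you have not established (the coefficient $w_{\lambda,\e}^{p-1}$ blows up at $\partial\Omega$, so bootstrapping to $C^1$ is not immediate). Fortunately the strong maximum principle is not needed: once $\varphi_0$ is replaced by $|\varphi_0|$, you have $\varphi_0\gneqq 0$ with $\|\varphi_0\|_{L^2}=1$, and since $w_{\lambda,\e}>0$ a.e.\ this already gives $\int_\Omega w_{\lambda,\e}^p\varphi_0\,dx>0$, which is all the contradiction requires. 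This is exactly how the paper argues, using only $\psi_{\lambda,\e}\gneqq 0$.
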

 \begin{proof}
  We first remind that, on account of
  Proposition \ref{prop:SublinearGeneral}\,-\,i), the function $w_{\lambda,\e}$
  is the u\-ni\-que, global minimizer of the functional $J_{\lambda,\e}$ in \eqref{eq:funJlambdaSub}, that is,
  $$J_{\lambda,\e}(w_{\lambda,\e}) = \min_{u\in\mathcal{X}^{1,2}(\Omega)}J_{\lambda,\e}(u);$$
  this, together with the fact that $w_{\lambda,\e}\in C^{1}(\overline{\Omega})$, see Proposition \ref{prop:SublinearGeneral}\,-\,ii),
  ensures that the \emph{second variation} of the functional
  $J_{\lambda,\e}$ at $w_{\lambda,\e}$ is non-negative, namely
  \begin{equation}\label{eq:SecondVariationJat_v}
\rho_{\varepsilon}(\varphi)^2 - p\lambda\int_{\Omega}w_{\lambda,\varepsilon}^{p-1}\varphi^2\, dx \geq 0
\quad\text{for every $\varphi\in\mathcal{X}^{1,2}(\Omega)$}.
 \end{equation}
 As a consequence, we get
 \begin{equation} \label{eq:lambda1geqzero}
 \begin{split}
 & \lambda_1 (\mathcal{L}_{\varepsilon}-\lambda\,p\,w_{\lambda,\varepsilon}^{p-1})
 \\
 & \qquad
 = \inf\Big\{\rho_{\varepsilon}(\varphi)^2 - \lambda\,p\,
 \int_{\Omega}w_{\lambda,\varepsilon}^{p-1}\varphi^2\, dx:\,
 \varphi\in\mathcal{X}^{1,2}(\Omega),\,\|\varphi\|_{L^2(\Omega)} = 1\Big\}
 \geq 0.
 \end{split}
 \end{equation}
 With \eqref{eq:lambda1geqzero} 
 at hand, to prove \eqref{eq:Lemma3.6_ABC} it then suffices to show that
 \begin{equation} \label{eq:toprovelambda1positive}
 \lambda_1 (\mathcal{L}_{\varepsilon}-\lambda\,p w_{\lambda,\varepsilon}^{p-1}) > 0;
 \end{equation}
 in fact, if \eqref{eq:toprovelambda1positive} holds, the desired \eqref{eq:Lemma3.6_ABC} 
 follows with the choice 
 $$\beta_{\lambda,\e} = \lambda_1 (\mathcal{L}_{\varepsilon}-\lambda\,p w_{\lambda,\varepsilon}^{p-1}).$$
 Hence, we turn to prove \eqref{eq:toprovelambda1positive}.
  To this end, we argue by contradiction assuming that 
 \eqref{eq:toprovelambda1positive} is not true; thus,
 by \eqref{eq:lambda1geqzero} we necessarily have that
 $$\lambda_1 (\mathcal{L}_{\varepsilon}-\lambda\,p w_{\lambda,\varepsilon}^{p-1}) = 0.$$
 Taking into account Proposition \ref{prop:SublinearGeneral}\,-\,ii), and reminding that
 $w_{\lambda,\e} > 0$ \emph{po\-int\-wi\-se in $\Omega$} (as $w_{\lambda,\e}\in C^1(\overline{\Omega})$),
 we can find some $\delta = \delta_{\lambda,\e} > 0$ such that
 $$w_{\lambda,\e}(x)\geq \delta_{\lambda,\e}\,\mathrm{dist}(x,\de\Omega)\quad\text{for
 every $x\in\overline{\Omega}$};$$
 from this, by arguing exactly as in \cite[Remark 3.1]{ABC}, we infer the existence 
  of a \emph{non-negative eigenfunction}
  associate with $\lambda_1 (\mathcal{L}_{\varepsilon}-a(x))$; this means,
  precisely, that there exists a function
  $\psi_{\lambda,\e} \in \mathcal{X}^{1,2}(\Omega),\,\psi_{\lambda,\e}\gneqq 0$, 
  such that
 \begin{equation*}
\mathcal{B}_\e(\psi_{\lambda,\e},\varphi) -
\lambda\,p\int_\Omega w_{\lambda,\varepsilon}^{p-1}\psi_{\lambda,\e}
\,dx = 0\quad\text{for every $\varphi\in\mathcal{X}^{1,2}(\Omega)$}.
 \end{equation*}
 Now, choosing $\varphi = w_{\lambda,\varepsilon}$ in the above identity, we get
 \begin{equation}\label{eq:Testando_con_v}
 \mathcal{B}_{\varepsilon}(\psi_{\lambda,\varepsilon}, w_{\lambda,\varepsilon}) = 
 \lambda\,p\int_{\Omega}w_{\lambda,\varepsilon}^{p}\psi_{\lambda,\varepsilon}\, dx;
 \end{equation}
 on the other hand, if we choose $\varphi = \psi_{\lambda,\e}$
 as a test function for the equation solved by $w_{\lambda,\varepsilon}$
 appearing in problem \eqref{eq:SublinearProblem}
 (see Definition \ref{def:weaksol}), we have
 \begin{equation}\label{eq:Testando_con_varphi}
 \mathcal{B}_{\varepsilon}(\psi_{\lambda,\varepsilon}, w_{\lambda,\varepsilon}) 
  = \lambda\int_{\Omega}w_{\lambda,\varepsilon}^{p}\psi_{\lambda,\varepsilon}\, dx.
 \end{equation}
 Gathering \eqref{eq:Testando_con_varphi}\,-\,\eqref{eq:Testando_con_v}, we then conclude that 
 $$\lambda\int_{\Omega}w_{\lambda,\varepsilon}^{p}\psi_{\lambda,\varepsilon}\, dx = 
 \lambda\,p\int_{\Omega}w_{\lambda,\varepsilon}^{p}\psi_{\lambda,\varepsilon}\, dx$$
 which is a contradiction, since $p\in (0,1)$
 (also recall that $w_{\lambda,\e} > 0$ pointwise in $\Omega$, and that $\psi_{\lambda,\e}\gneqq 0$). 
 This ends the proof.
 \end{proof}
  With Lemma \ref{lem:Lemma3.6_ABC} at hand, we can provide the
\begin{proof}[Proof of Theorem \ref{thm:main}]
Let $\e\in(0,1]$ be fixed, and let
 $\beta_\e = \beta_{1,\varepsilon}>0$ be as in Lemma \ref{lem:Lemma3.6_ABC}, with
 $\lambda = 1$. Moreover, let $M_\e>0$ be such that 
\begin{equation}\label{eq:Def_M}
p \, M_\e^{2^{\ast}-1} < \beta_\e.
\end{equation}
We now prove that, \emph{with this choice of $M_\e$}, assertion 1) is satisfied.

To this end, we argue
by contradiction assuming that, for some fixed $\lambda\in (0,\Lambda_\e)$, there exists 
a second solution $v_{\lambda,\e}$ of problem \eqref{eq:Problem}$_{\lambda}$ such that 
\begin{equation}\label{eq:Stima_w_lambda}
\|v_{\lambda,\e}\|_{L^{\infty}(\mathbb{R}^{n})} \leq M_\e.
\end{equation}
By Theorem \ref{thm:ALTRI} (which can be applied
to $\LL$, see Lemma \ref{lem:existencemusharp}), 
we know that there exists a \emph{minimal solution $u_{\lambda,\e}$} of
problem
\eqref{eq:Problem}$_{\lambda}$; hence, we have
\begin{equation*}
v_{\lambda,\e} = u_{\lambda,\e} + g_\e, 
\end{equation*}
 for some function $g_\e\in\mathcal{X}^{1,2}(\Omega),\,g_\e\geq 0$ in $\Omega$. 
 
 We now define the auxiliary function $\zeta_\e:\mathbb{R}^{n}\to \mathbb{R}$ as
\begin{equation*}
\zeta_\e(x) := \lambda^{1/(1-p)}\,w_{1,\e}(x),
\end{equation*}
\noindent where $w_{1,\e}$ is the unique solution of the purely sublinear 
problem \eqref{eq:SublinearProblem} with $\lambda = 1$. 
A direct computation shows that $\zeta$ weakly solves
\begin{equation}
\mathcal{L}_\e\zeta_\e = \lambda \, \zeta_\e^{p} \quad \textrm{in } \Omega.
\end{equation}
On the other hand, 
since $u_{\lambda,\e}$ is a weak solution of
     problem \eqref{eq:Problem}$_\lambda$ (hence, in particular,
     $u_{\lambda,\e} > 0$ a.e.\,in $\Omega$), this function is a 
     \emph{weak supersolution} of  problem \eqref{eq:SublinearProblem} (in
     the sense of Definition \ref{def:weaksol} with $f(t) = \lambda t^p$);
     as a consequence, since we have already observed that
     $\zeta$ solves
     \eqref{eq:SublinearProblem}, we can use \cite[Lemma 3.11]{AMT}
     (which can certainly be applied
     to the operator $\LL$, the role of $\e$ being immaterial), obtaining
\begin{equation} \label{eq:Comparison_ulambda_v}
\text{$\zeta_\e = \lambda^{1/(1-p)}w_{1,\e} \leq u_{\lambda,\e}$ in $\R^n$}.
\end{equation}
With all this at hand we can now conclude. First,
\begin{equation*}
\begin{aligned}
\mathcal{L}_\e v_{\lambda,\e} &= \mathcal{L}_\e(u_{\lambda,\e} +g_\e)\\
&  = \lambda (u_{\lambda,\e}+g_\e)^{p} + (u_{\lambda,\e}+g_\e)^{2^{\ast}-1} \quad \mbox{($v_{\lambda,\e}$ is a solution of \eqref{eq:Problem}$_{\lambda}$)}\\
&\leq \lambda u_{\lambda,\e}^p + \lambda p\, u_{\lambda,\e}^{p-1} g_\e  + (u_{\lambda,\e}+g_\e)^{2^{\ast}-1} \quad \mbox{(by concavity of $t\mapsto t^{p}$)};
\end{aligned}
\end{equation*}
as a consequence, we have
\begin{equation*}
\begin{aligned}
\LL g_\e &\leq \lambda u_{\lambda,\e}^p + \lambda p\, u_{\lambda,\e}^{p-1} g_\e  
 + (u_{\lambda,\e}+g_\e)^{2^{\ast}-1} - \LL u_{\lambda,\e}\\
&= \lambda u_{\lambda,\e}^p + \lambda p\, u_{\lambda,\e}^{p-1} g_\e  + (u_{\lambda,\e}+g_\e)^{2^{\ast}-1} - \lambda u_{\lambda,\e}^{p} - u_{\lambda,\e}^{2^{\ast}-1} \\
&= \lambda p\, u_{\lambda,\e}^{p-1} g_\e  + (u_{\lambda,\e}+g_\e)^{2^{\ast}-1} - 
 u_{\lambda,\e}^{2^{\ast}-1} \\
&\leq p\, w_{1,\e}^{p-1} g_\e  + (u_{\lambda,\e}+g_\e)^{2^{\ast}-1} -  u_{\lambda,\e}^{2^{\ast}-1} \quad \mbox{(by \eqref{eq:Comparison_ulambda_v}, and since $0<p<1$)}\\
&\leq p\,w_{1,\e}^{p-1} g_\e  + p \, M_\e^{2^{\ast}-1} g_\e \qquad \mbox{(by \eqref{eq:Stima_w_lambda})}\\
&<p\,w_{1,\e}^{p-1} g_\e + \beta_\e \, g_\e \qquad \qquad \mbox{(by \eqref{eq:Def_M})}.
\end{aligned}
\end{equation*}
\noindent Summing up, the function $g_\e$ satisfies (in the weak sense on $\Omega$)
\begin{equation*}
\mathcal{L}_\e g_\e - p\,w_{1,\e}^{p-1}g_\e < \beta_\e g_\e.
\end{equation*}
\noindent Testing it with $g$ itself, and using \eqref{eq:Lemma3.6_ABC}
(with $\lambda = 1$), we get
\begin{equation*}
\beta_\e \int_{\Omega}g_\e^2 \, dx \leq \rho_\e(g_\e)^2 - 
 p \int_{\Omega}w_{1,\e}^{p-1}g_\e\, dx < \beta_\e \int_{\Omega}g_\e^2 \, dx,
\end{equation*}
\noindent which is a contradiction. 
\end{proof}
    
\section{Proof of Theorem \ref{thm:H1vsC1}}\label{sec.H1C1}
In this section we show that the famous result \cite[Theorem 1]{BNH1C1} still holds in the mixed local-nonlocal setting. 
The proof is a quite simple adaptation of the original proof by Brezis and Nirenberg and the major role is played by the recent regularity results established in \cite{AC2} and \cite{SVWZ2}.

\begin{proof}[Proof of Theorem \ref{thm:H1vsC1}]
By assumption, $u_0 \in \mathcal{X}^{1,2}(\Omega)$ is a local minimizer for $\Phi$ in the $C^1$-topology; this
means, precisely, that there exists some $r > 0$ such that
\begin{equation}\label{eq:C1Minimizer}
\Phi(u_0) \leq \Phi(u_0 +v), \quad \textrm{for all } v\in C^{1}_{0}(\overline{\Omega}) \, \textrm{ s.t. } \|v\|_{C^1(\Omega)}\leq r.
\end{equation}
It follows that the Euler-Lagrange equation satisfied by $u_0$ is 
\begin{equation*}
\begin{cases}
\mathcal{L}_{\varepsilon}v = f(x,v)& \textrm{ in } \Omega,\\
v=0 & \textrm{ in } \mathbb{R}^{n}\setminus \Omega;
\end{cases}
\end{equation*}
 as a consequence, since (by assumption) $f$ satisfies the growth condition \eqref{eq:growth_on_f}, from
 Theorem \ref{thm:regulAntCozzi} we derive that $u_0\in C^{1,\alpha}(\overline{\Omega})$
 for some $\alpha\in(0,1)$. In view of this fact,
 from now on we may assume without loss of generality that
 $$u_0 = 0.$$
  We now argue by contradiction, assuming that $u_0$ 
  \emph{is not a local minimizer for $\Phi$ in the $\mathcal{X}^{1,2}$-to\-po\-lo\-gy}; thus,
  for every $\delta > 0$ there exists $v_\delta\in\mathcal{X}^{1,2}(\Omega)$ such that
 \begin{equation} \label{eq:tocontradictH1C1PartI}
  \rho_\e(v_\delta)<\delta\quad\text{and}\quad \Phi(v_\delta) < \Phi(0).
 \end{equation}
 We explicitly notice that use of $\rho_\e(\cdot)$ to define an open neighborhood
 of $u_0 = 0$ in \eqref{eq:tocontradictH1C1PartI} is motivated by the fact that, since $\e\in (0,1)$,
 by \eqref{eq:equivalenceuniforme} we have
 $$\|u\|_{H^1_0(\Omega)}\leq\rho_\e(u)\leq \rho(u)\leq \Theta\|u\|_{H^1_0(\Omega)}$$
 for some $\Theta > 0$ only depending on $n,s$; as a consequence,
 the norm $\rho_\e(\cdot)$ is globally equivalent to $\rho(\cdot)$ (and to the 
 $H_0^1$-norm), uniformly w.r.t.\,$\e$.
 \vspace*{0.05cm}
 
 Now, since the closed set $K_\delta = \{u\in\mathcal{X}^{1,2}(\Omega):\,\rho_\e(u)\leq \delta\}
 \subseteq$ 
 is weakly sequentially compact, and since $\Phi$
 is weakly lower semincontinuous and bounded from below
 (by the growth assumption \eqref{eq:growth_on_f} on $f$), 
 we can find $w_\delta\in K_\delta$ such that
 $$\Phi(w_\delta) = \min_{u\in K_\delta}\Phi(u).$$
 In particular, by \eqref{eq:tocontradictH1C1PartI} we get
 \begin{equation} \label{eq:tocontradictH1C1}
  \Phi(w_\delta) \leq \Phi(v_\delta) < \Phi(0)\quad\text{for every $\delta > 0$}.
 \end{equation}
 We then claim that 
 \begin{equation} \label{eq:wdeltatozeroCLAIM}
  \text{$\mathrm{i)}\,\,w_\delta\in C_0^1(\overline{\Omega})$ \quad and\quad
    $\mathrm{ii)}\,\,\lim_{\delta \to 0}w_\delta = 0$ in the $C^1(\overline{\Omega})$-topology}.
 \end{equation}
 Taking this claim for granted for a moment, we can immediately
 complete the proof of the theorem: in fact, owing to 
 \eqref{eq:wdeltatozeroCLAIM}, we see that there exists $\delta_0 > 0$ such that
 $$\|w_\delta\|_{C^1(\overline{\Omega})}\leq r\quad\text{for every $0<\delta<\delta_0$},$$
 where $r > 0$ is as in \eqref{eq:C1Minimizer}; thus, by \eqref{eq:C1Minimizer}
 (and since we are assuming $u_0 = 0$) we infer that
 $\Phi(0)\leq \Phi(w_\delta)$ for every $0<\delta<\delta_0$,
 but this clearly contradicts \eqref{eq:tocontradictH1C1}.
 
 We then turn to prove \eqref{eq:wdeltatozeroCLAIM}, and we proceed by steps.
 \medskip
 
 \textsc{Step I).} In this first step we prove that there exist $\delta_0 > 0$
 and a constant $c_0 > 0$, \emph{independent of $\delta$}, such that
 the following estimate holds:
 \begin{equation} \label{eq:wdeltaInfStepI}
  \|w_\delta\|_{L^\infty(\Omega)}\leq c_0\quad\text{for every $0<\delta<\delta_0$}.
 \end{equation}
 To this end we first notice that, since $w_\delta$ is a global minimizer for
 $\Phi$ on $K_\delta$, 
 by the Lagrange Multiplier Theorem, there exists $\mu_{\delta}\in \mathbb{R}$ such that 
\begin{equation}\label{eq:wdeltaminI}
\mathcal{B}_{\varepsilon}(w_{\delta},\varphi) - \int_{\Omega}f(x,w_{\delta})\varphi \, dx = \mu_{\delta}\, \mathcal{B}_{\varepsilon}(w_{\delta},\varphi) \quad \textrm{for all } \varphi \in \mathcal{X}^{1,2}(\Omega).
\end{equation}
If $\rho_{\varepsilon}(v_{\delta}) < \delta$ (that is, if $w_\delta\in\mathrm{int}(K_\delta)$), 
then $\mu_{\delta}=0$; if, instead,
$\rho_{\varepsilon}(v_{\delta}) = \delta$ (that is, if $w_\delta\in\de K_\delta$), 
by choosing $\varphi = v_{\delta}$ in \eqref{eq:wdeltaminI} we derive that
\begin{equation*}
\begin{aligned}
\mu_{\delta} \, \delta^2 & = \mathcal{B}_\e(w_\delta,w_\delta) = 
\mathcal{B}_{\varepsilon}(w_{\delta},w_{\delta}) - \int_{\Omega}f(x,w_{\delta})w_{\delta}\, dx \\
& = \Phi(w_{\delta}) + \int_{\Omega}F(x,w_{\delta})\, dx - \int_{\Omega}f(x,w_{\delta})w_{\delta}\, dx\\
& (\text{using \eqref{eq:tocontradictH1C1}, and since $\Phi(0) = 0$}) \\
& < \int_{\Omega}\big(F(x,w_{\delta})\, dx - \int_{\Omega}f(x,w_{\delta})w_{\delta}\big)dx \leq 0,
\end{aligned}
\end{equation*}
 where we have used the growth condition \eqref{eq:growth_on_f}. 
 Summing up, we derive that $\mu_{\delta}\leq 0$ and that $w_{\delta} \in \mathcal{X}^{1,2}(\Omega)$ 
 is a of the Dirichlet problem
\begin{equation}
\begin{cases}
 \displaystyle\LL u = \frac{f(x,u)}{1-\mu_\delta} & \text{in $\Omega$}, \\
 u = 0 & \text{in $\R^n\setminus\Omega$}
\end{cases}
\end{equation}
 (in the sense of Definition \ref{def:weaksol}). Now, since $f$ satisfies
 the growth condition \eqref{eq:growth_on_f} and since $\mu_\delta\leq 0$,
 it is readily seen that the function
 \begin{equation} \label{eq:gdeltaDef}
  g_\delta(x,t) = \frac{f(x,t)}{1-\mu_\delta}\qquad(x\in\Omega,\,t\in\R)
  \end{equation}
 satisfies \emph{the same growth condition \eqref{eq:growth_on_f}}, with the same constant
 $C_f > 0$; we are then entitled to apply
 \cite[Theorem 1.1]{SVWZ2}, thus giving
 \begin{equation} \label{eq:stimaLinfVal}
  \|w_\delta\|_{L^\infty(\Omega)}\leq \kappa_0\Big(1+
   \int_\Omega|w_\delta|^{2^*\beta}\,dx\Big)^{\frac{1}{2^*(\beta-1)}},
 \end{equation}
 where $\beta = (2^*+1)/2$ and $\kappa_0 > 0$ is a constant independent of $\delta$.
 With \eqref{eq:stimaLinfVal} at hand, to complete the proof
 of \eqref{eq:wdeltaInfStepI} we need to show that
 \begin{equation} \label{eq:boundednessIntegral}
   \int_\Omega|w_\delta|^{2^*\beta}\,dx\leq c
\end{equation}  
 for some constant $c > 0$ independent of $\delta$, provided that $\delta$ is sufficiently small.
 To prove \eqref{eq:boundednessIntegral}, we closely
 follow the approach in \cite[Lemma 3.2]{SVWZ2}: defining
 \begin{equation*}
\varphi(t):= 
\begin{cases}
-\beta T^{\beta -1}(t+T) + T^{\beta}, & t \leq -T\\
|t|^{\beta}, & -T<t <T\\
\beta T^{\beta -1}(t-T) + T^{\beta}, & t \geq T.
\end{cases}
\end{equation*}
(with $T > 0$), and arguing exactly as in  \cite[Lemma 3.2]{SVWZ2}, we get
\begin{equation}\label{eq:3.5_Val2}
\begin{aligned}
& \Big(\int_\Omega|\varphi(w_\delta)|^{2^*}\,dx\Big)^{2/2^*} \leq c\beta
\Big(\int_{\Omega}|w_\delta|^{2^*}\, dx + 
  \int_{\Omega}(\varphi(w_\delta))^2|w_\delta|^{2^{\ast}-2}\, dx\Big),
\end{aligned}
\end{equation}
\noindent which is exactly  to \cite[Equation (3.5)]{SVWZ2} (here,
$c > 0$ is a constant only depending on $n,\Omega$ and on the constant $C_f$).
On the other hand, we have
\begin{align*}
 & \int_{\Omega}(\varphi(w_\delta))^2|w_\delta|^{2^{\ast}-2}\, dx \\
 & \qquad =
 \int_{\{|w_\delta|\leq 1\}}(\varphi(w_\delta))^2|w_\delta|^{2^{\ast}-2}\, dx
 + \int_{\{|w_\delta|>1\}}(\varphi(w_\delta))^2|w_\delta|^{2^{\ast}-2}\, dx \\
 & \qquad \leq 
 \int_{\{|w_\delta|\leq 1\}}\frac{\varphi(w_\delta)^2}{|w_\delta|}\, dx
 + \int_{\Omega}(\varphi(w_\delta))^2|w_\delta|^{2^{\ast}-2}\, dx = (\bigstar);
\end{align*}
then, using H\"older's and Sobolev's inequality, we obtain
\begin{align*}
 (\bigstar) & 
 \leq \int_{\{|w_\delta|\leq 1\}}\frac{|\varphi(w_\delta)|^2}{|w_\delta|}\, dx
 + \Big(\int_{\Omega}|\varphi(w_\delta)|^{2^*}\,dx\Big)^{2/2^*}
 \Big(\int_\Omega|w_\delta|^{2^{\ast}}\, dx\Big)^{\frac{2^*-2}{2^*}} \\
 & \leq \int_{\{|w_\delta|\leq 1\}}\frac{|\varphi(w_\delta)|^2}{|w_\delta|}\, dx
 + S_n^{\frac{2-2^*}{2}}\Big(\int_{\Omega}|\varphi(w_\delta)|^{2^*}\,dx\Big)^{2/2^*}
 \|w_\delta\|_{H^1_0(\Omega)}^{2^*-2} \\
 & (\text{since $\rho_\e(\cdot)\geq \|\cdot\|_{H^1_0}$, and since $w_\delta\in K_\delta$})\\
 & \leq \int_{\{|w_\delta|\leq 1\}}\frac{|\varphi(w_\delta)|^2}{|w_\delta|}\, dx
 + \delta^{2^*-2}S_n^{\frac{2-2^*}{2}}\Big(\int_{\Omega}|\varphi(w_\delta)|^{2^*}\,dx\Big)^{2/2^*}.
\end{align*}
where $S_n > 0$ is the best constant in the (local) Sobolev inequality. U\-sing this last estimate
in the above \eqref{eq:3.5_Val2}, we then derive 
\begin{equation}\label{eq:toreabsorb}
\begin{split}
  \Big(\int_\Omega|\varphi(w_\delta)|^{2^*}\,dx\Big)^{2/2^*} & \leq c\beta
\Big(\int_{\Omega}|w_\delta|^{2^*}\, dx + 
   \int_{\{|w_\delta|\leq 1\}}\frac{|\varphi(w_\delta)|^2}{|w_\delta|}\, dx\Big) \\
  & \quad
 + c\beta\delta^{2^*-2}S_n^{\frac{2-2^*}{2}}
 \Big(\int_{\Omega}|\varphi(w_\delta)|^{2^*}\,dx\Big)^{2/2^*}\Big).
\end{split}
\end{equation}
With \eqref{eq:toreabsorb} at hand, we can proceed
toward the end of the proof
of \eqref{eq:boundednessIntegral}: indeed, if we choose $\delta_0 \in (0,1)$ so small that 
$$c\beta\delta^{2^*-2}S_n^{\frac{2-2^*}{2}}<\frac{1}{2}$$
and if we let $0<\delta<\delta_0$, we can reabsorb the last integral in the right-hand side of 
the cited \eqref{eq:toreabsorb}, thus giving
\begin{equation*}
\begin{split}
  \Big(\int_\Omega|\varphi(w_\delta)|^{2^*}\,dx\Big)^{2/2^*} & \leq 2c\beta
\Big(\int_{\Omega}|w_\delta|^{2^*}\, dx + 
   \int_{\{|w_\delta|\leq 1\}}\frac{|\varphi(w_\delta)|^2}{|w_\delta|}\, dx\Big);
\end{split}
\end{equation*}
from this, by letting $T\to+\infty$ and by arguing as in
\cite[Lemma 3.2]{SVWZ2}, we get
\begin{align*}
 \Big(\int_\Omega|w_\delta|^{2^*\beta}\,dx\Big)^{2/2^*}
 \leq 4c\beta\int_{\Omega}|w_\delta|^{2^*}\, dx\quad\text{for every $0<\delta<\delta_0$}.
\end{align*}
Finally, by using once again the Sobolev inequality, we conclude that
\begin{align*}
 \Big(\int_\Omega|w_\delta|^{2^*\beta}\,dx\Big)^{2/2^*}
 \leq 4c\beta\,S_n^{-2^*/2}\rho_\e(w_\delta)\leq 4c\beta\,S_n^{-2^*/2},
\end{align*}
and this estimate holds for every $0<\delta<\delta_0$ (since $\delta_0<1$).
Recalling that $c > 0$
only depends on $n,\Omega$ and on the constant $C_f$,
this completes the proof of \eqref{eq:boundednessIntegral}.
\medskip

\textsc{Step II).} In this second step we complete the proof of
\eqref{eq:wdeltatozeroCLAIM}\,-\,i). To this end
it suffices to observe that, since the function 
$g_\delta$ in \eqref{eq:gdeltaDef} satisfies the growth condition
\eqref{eq:growth_on_f} \emph{with the same constant $C_f$} (independent of $\delta$),
we are entitled to apply
\cite[Theorem 1.3]{SVWZ2}; this, together with \eqref{eq:wdeltaInfStepI}, ensures that
\begin{itemize}
 \item[1)] $w_\delta\in C^{1,\alpha}(\overline{\Omega})$ for every $0<\alpha<\min\{1,2-2s\}$;
 \item[2)] there exists $c > 0$, independent of $\delta$, such that
 \begin{equation} \label{eq:uniformHolderwdelta}
  \|w_\delta\|_{C^{1,\alpha}(\overline{\Omega})}
  \leq c(1+\|w_\delta\|^{2^*}_{L^\infty(\Omega)})\leq c\quad\text{for every $0<\delta<\delta_0$}.
 \end{equation}
 \end{itemize}
 In particular, since $w_\delta\in\mathcal{X}^{1,2}(\Omega)$, we get $w_\delta\equiv 0$ pointwise
 on $\de\Omega$.
 \medskip
 
 \textsc{Step III).} In this last step we give the proof of \eqref{eq:wdeltatozeroCLAIM}\,-\,ii).
 To this end we o\-bserve that, on account of
 \eqref{eq:uniformHolderwdelta}, by the Arzel\`a-A\-sco\-li Theorem we can find
 a sequence $\{\delta_j\}\subseteq(0,\delta_0)$ converging to $0$ as $j\to+\infty$
 and $w_0\in C_0^1(\overline{\Omega})$ such that
 $$\|w_{\delta_j}- w_0\|_{C^1(\overline{\Omega})}\to 0\,\,\text{as $j\to+\infty$};$$
 thus, since $w_{\delta_j}\to 0$ in $\mathcal{X}^{1,2}(\Omega)$ as $j\to+\infty$ (recall that
 $w_{\delta}\in K_\delta$ and that the two norms $\rho_\e(\cdot),\,\rho(\cdot)$
 are globally equivalent in $\mathcal{X}^{1,2}(\Omega)$), we conclude that
 $$w_0\equiv 0.$$
 This ends the proof.
\end{proof}
\section{Proof of Theorem \ref{thm:main2}} \label{sec:proofMain2}
Following the approach in \cite{ABC}, in this section
we show how Theorem \ref{thm:H1vsC1} can be used in order
to establish Theorem \ref{thm:main2},
that is, the existence of (at least) \emph{two distinct solutions} of \eqref{eq:Problem}$_{\lambda}$
provided that $\e \in (0,1)$ is small enough.
\medskip

  We now turn to prove that, if $\e\in(0,1]$ is fixed
  and if $\lambda\in(0,\mu_\sharp)$ (with $\mu_\sharp > 0$ as in Lemma \ref{lem:existencemusharp}), 
  there exists a solution $u_{\lambda,\e}$ of problem \eqref{eq:Problem}$_{\lambda}$
  which is a \emph{local minimizer}
  for the functional associated with 
  the problem, that is,
  \begin{equation} \label{eq:funIlambdae}
   I_{\lambda,\e}(u) = \dfrac{1}{2}\mathcal{\rho}_{\varepsilon}(u)^2 - \dfrac{\lambda}{p+1}
  \int_{\Omega}|u|^{p+1}\, dx - \dfrac{1}{2^{\ast}}\int_{\Omega}|u|^{2^{\ast}}\, dx,
\end{equation}
\noindent where 
$
\mathcal{\rho}_{\varepsilon}(u)^2 := \|u\|_{H^1_0(\Omega)} + \varepsilon \, [u]^{2}_{s,\mathbb{R}^{n}}$.
\begin{theorem} \label{thm:ulambdaminimizer}
 Let $\e\in(0,1]$ be fixed, and let $\lambda\in(0,\mu_\sharp)$ \emph{(}with $\mu_\sharp > 0$ 
 as in Lemma \ref{lem:existencemusharp}\emph{)}. 
  Then, there exists a solution $u_{\lambda,\e}\in\mathcal{X}^{1,2}(\Omega)$
  of problem \eqref{eq:Problem}$_{\lambda}$ which is a local mi\-ni\-mi\-zer for $I_{\lambda,\e}$,
 that is, there exists $\varrho_0 > 0$ such that
 $$I_{\lambda,\e}(u)\geq I_{\lambda,\e}(u_{\lambda,\e})\quad\text{for all $u\in\mathcal{X}^{1,2}(\Omega)$
 with $\rho(u-u_{\lambda,\e}) < \varrho_0$}.$$
 Moreover, the following assertions hold:
 \begin{itemize}
  \item[{1)}] there exists a constant $C > 0$, independent of $\e$ but possibly 
  depending on the fixed $\lambda\in(0,\mu_\sharp)$, such that
  \begin{equation} \label{eq:uminlocLinf}
  \|u_{\lambda,\e}\|_{L^\infty(\Omega)}\leq C;
 \end{equation}
 \item[2)] if $w_{\lambda,\e}\in\mathcal{X}^{1,2}(\Omega)$ is the unique solution
 of \eqref{eq:SublinearProblem}, we have
 \begin{equation} \label{eq:lowerboundWMP}
  u_{\lambda,\e}\geq w_{\lambda,\e}\quad\text{a.e.\,in $\Omega$}.
 \end{equation}
 \end{itemize}
\end{theorem}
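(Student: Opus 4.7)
The plan follows the scheme of Ambrosetti--Brezis--Cerami: I will construct $u_{\lambda,\e}$ as the global minimizer of a truncated functional, verify that the truncation is inactive at $u_{\lambda,\e}$ so that it actually solves \eqref{eq:Problem}$_\lambda$ with the required bounds, prove that it is a local minimizer in the $C^1$-topology, and finally upgrade to the $\mathcal{X}^{1,2}$-topology via Theorem \ref{thm:H1vsC1}.

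Concretely, I would introduce the truncated nonlinearity
$$\tilde{f}_{\lambda,\e}(x,t) = \begin{cases} \lambda\, w_{\lambda,\e}(x)^p + w_{\lambda,\e}(x)^{2^*-1}, & t < w_{\lambda,\e}(x), \\ \lambda\, t^p + t^{2^*-1}, & w_{\lambda,\e}(x)\leq t\leq M, \\ \lambda\, M^p + M^{2^*-1}, & t > M, \end{cases}$$
where $M > 0$ is a threshold to be chosen, and define $\widetilde{I}_{\lambda,\e}$ by replacing $F$ with the primitive $\widetilde{F}$ of $\tilde{f}_{\lambda,\e}$ in \eqref{eq:funIlambdae}. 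Since $\tilde{f}_{\lambda,\e}$ is globally bounded, $\widetilde{I}_{\lambda,\e}$ is coercive, bounded below and weakly lower semicontinuous on $\mathcal{X}^{1,2}(\Omega)$ by Remark \ref{rem:spaceX12}-5), so the direct method yields a global minimizer $u_{\lambda,\e}$, which weakly solves $\LL u = \tilde{f}_{\lambda,\e}(x,u)$ in $\Omega$.

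To check that the truncation is inactive, I would test $w_{\lambda,\e}-u_{\lambda,\e}$ against $(w_{\lambda,\e}-u_{\lambda,\e})_+\in\mathcal{X}^{1,2}(\Omega)$: on its positivity set the right-hand side reduces to $-w_{\lambda,\e}^{2^*-1}\leq 0$, forcing $(w_{\lambda,\e}-u_{\lambda,\e})_+\equiv 0$ and giving \eqref{eq:lowerboundWMP}. For the upper bound, since $|\tilde{f}_{\lambda,\e}|\leq \lambda M^p + M^{2^*-1}$, Lemma \ref{lem:bound_indip_eps} provides $\|u_{\lambda,\e}\|_{L^\infty(\R^n)}\leq C_0(\lambda M^p + M^{2^*-1})$ with $C_0$ independent of $\e$. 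Choosing $M$ exactly as in Lemma \ref{lem:existencemusharp} (possible since $\lambda < \mu_\sharp$) so that the strict inequality $C_0(\lambda M^p + M^{2^*-1}) < M$ holds, we obtain \eqref{eq:uminlocLinf} together with the identity $\tilde{f}_{\lambda,\e}(x,u_{\lambda,\e}) = \lambda u_{\lambda,\e}^p + u_{\lambda,\e}^{2^*-1}$; thus $u_{\lambda,\e}$ is a solution of \eqref{eq:Problem}$_\lambda$, with positivity inherited from $u_{\lambda,\e}\geq w_{\lambda,\e} > 0$ in $\Omega$.

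Finally, Theorem \ref{thm:regulAntCozzi} gives $u_{\lambda,\e},\,w_{\lambda,\e}\in C^{1,\alpha}(\overline{\Omega})$. Since $u_{\lambda,\e}-w_{\lambda,\e}\not\equiv 0$ is a non-negative weak supersolution of $\LL v = 0$ (indeed $\LL(u_{\lambda,\e}-w_{\lambda,\e}) = \lambda(u_{\lambda,\e}^p - w_{\lambda,\e}^p) + u_{\lambda,\e}^{2^*-1}\geq 0$), Theorem \ref{thm:WMPSMPHopfTogether} yields $u_{\lambda,\e} > w_{\lambda,\e}$ in $\Omega$ and $\de_\nu(u_{\lambda,\e}-w_{\lambda,\e}) < 0$ on $\de\Omega$; combined with the strict bound $u_{\lambda,\e} < M$ on $\overline{\Omega}$, this guarantees that every $v\in C^1(\overline{\Omega})$ sufficiently close to $u_{\lambda,\e}$ in the $C^1$-norm satisfies $w_{\lambda,\e}\leq v\leq M$ pointwise, so $\widetilde{I}_{\lambda,\e}(v) = I_{\lambda,\e}(v)$. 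The global minimality of $u_{\lambda,\e}$ for $\widetilde{I}_{\lambda,\e}$ therefore makes it a $C^1$-local minimizer of $I_{\lambda,\e}$, and Theorem \ref{thm:H1vsC1} upgrades this to an $\mathcal{X}^{1,2}$-local minimizer. The main obstacle is ensuring that both the threshold $M$ and the $L^\infty$-constant $C_0$ can be chosen independently of $\e$; this is precisely what the $\e$-uniform Lemma \ref{lem:bound_indip_eps} and the $\e$-uniform choice of $\mu_\sharp$ in Lemma \ref{lem:existencemusharp} deliver. A secondary subtlety is the $C^1$-strict separation of $u_{\lambda,\e}$ from $w_{\lambda,\e}$, which cannot follow from pointwise positivity alone (both vanish on $\de\Omega$) and rests on the Hopf component of Theorem \ref{thm:WMPSMPHopfTogether}.
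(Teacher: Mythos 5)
Your proof takes a genuinely different route from the paper's. The paper truncates the nonlinearity between $u_1 = \bar u_{\lambda_1,\e}$ and $u_2 = \bar u_{\lambda_2,\e}$, the minimal solutions for two parameters $\lambda_1 < \lambda < \lambda_2$, and uses the monotonicity of the minimal branch plus the strong maximum principle and Hopf lemma (Theorem \ref{thm:WMPSMPHopfTogether}) to squeeze the global minimizer of the truncated functional strictly between $u_1$ and $u_2$ in the $C^1$-sense. This gives a two-sided $C^1$-separation essentially for free, and the uniform $L^\infty$ bound \eqref{eq:uminlocLinf} then follows from $\|\bar u_{\lambda_2,\e}\|_\infty \le C$ (Lemma \ref{lem:existencemusharp}), while \eqref{eq:lowerboundWMP} is obtained separately by comparison with the sublinear solution via \cite[Lemma 3.11]{AMT}. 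You instead truncate between the sublinear solution $w_{\lambda,\e}$ and a \emph{constant} level $M$; this buys you \eqref{eq:lowerboundWMP} directly (your test-with-$(w-u)_+$ argument is clean and correct), and replaces the monotone family of minimal solutions with a flat barrier. The price is that $M$ is not a supersolution, so you cannot get $u \le M$ from a comparison principle and must appeal to the $\e$-uniform a priori bound of Lemma \ref{lem:bound_indip_eps}. Both schemes are in the ABC tradition, and both end with Theorem \ref{thm:H1vsC1}.

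There is, however, one unaddressed point that needs to be filled in before your argument closes. You assert $|\tilde f_{\lambda,\e}| \le \lambda M^p + M^{2^*-1}$, but on the region $t < w_{\lambda,\e}(x)$ you have $\tilde f_{\lambda,\e}(x,t) = \lambda\,w_{\lambda,\e}(x)^p + w_{\lambda,\e}(x)^{2^*-1}$, so this pointwise bound requires $\|w_{\lambda,\e}\|_{L^\infty(\Omega)} \le M$ (and, more basically, the middle regime $w_{\lambda,\e}(x)\le t\le M$ of your truncation is empty wherever $w_{\lambda,\e}(x) > M$). You do not establish this. It is true, and can be proved either by the very same comparison lemma the paper invokes for its assertion 2) (since the minimal solution $\bar u_{\lambda,\e}$ is a supersolution of the sublinear problem, $w_{\lambda,\e} \le \bar u_{\lambda,\e}$, and $\|\bar u_{\lambda,\e}\|_\infty\le r\le M$), or directly: testing \eqref{eq:SublinearProblem} with $w_{\lambda,\e}$ and applying Lemma \ref{lem:bound_indip_eps} to the right-hand side $\lambda w_{\lambda,\e}^p$ gives $\|w_{\lambda,\e}\|_\infty^{1-p}\le C_0\lambda|\Omega|^{1/q}$, which is compatible with the admissible range $[r_1,r_2]$ for $M$ in condition $(\star)$ of Lemma \ref{lem:existencemusharp}. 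Either way, this should be stated explicitly; without it the $L^\infty$ estimate on $u_{\lambda,\e}$, and hence the inactivity of the upper truncation, is not justified. A very minor additional point: $\widetilde{I}_{\lambda,\e}$ and $I_{\lambda,\e}$ agree on $\{w_{\lambda,\e}\le v\le M\}$ only up to an additive constant $-\int_\Omega g(x)\,dx$ coming from the lower truncation, not literally; this is harmless for the local-minimizer conclusion but worth phrasing precisely.
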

\begin{proof}
  The proof is analogous to that of \cite[Lemma 4.1]{ABC}, but we
  present it here for the sake of completeness. To begin with, we fix $\lambda_1,\lambda_2
  \in (0,\mu_\sharp)$ such that 
  \begin{equation} \label{eq:choicelambda12}
   \lambda_1<\lambda<\lambda_2;
  \end{equation} accordingly, we let
  $u_i = \bar u_{\lambda_i,\e}\in\mathcal{X}^{1,2}(\Omega)$ be the minimal solution of 
  problem \eqref{eq:Problem}$_{\lambda_i}$ (with $i = 1,2$), whose existence is guaranteed
  by Lemma \ref{lem:existencemusharp}.
  We then observe that, on account of Theorem 
  \ref{thm:regulAntCozzi}, we have that $u_1,u_2\in C^{1}(\overline{\Omega})$; 
  moreover, since the map $t\mapsto \bar u_{t,\e}$ is non-decreasing on $(0,\Lambda_\e)$,
  by \eqref{eq:choicelambda12} we also have 
  $$\text{$u_1\leq u_2$ pointwise in $\Omega$}.$$
  In view of these facts, and since $u_i$ solves 
 \eqref{eq:Problem}$_{\lambda_i}$ (for $i = 1,2$), we infer that
  \begin{itemize}
  \item[a)] $w = u_2-u_1\in\mathcal{X}^{1,2}(\Omega)\cap C^1(\overline{\Omega})$;

  \item[b)] for every test function 
  $\varphi\in C_0^\infty(\Omega),\,\text{$\varphi\geq 0$ on $\Omega$}$, we have
  \begin{align*}
  \mathcal{B}_{\varepsilon}(w,\varphi) & = \int_\Omega(\lambda_2 u_2^p+u_2^{2^*-1})\varphi\,dx
   -\int_\Omega(\lambda_1 u_1^p+u_1^{2^*-1})\varphi\,dx \\
   & (\text{by \eqref{eq:choicelambda12}, and since $u_2\geq u_1\geq 0$}) \\
   & \geq \lambda_1\int_\Omega(u_2^p-u_1^p)\varphi\,dx+
   \int_\Omega(u_2^{2^*-1}-u_1^{2^*-1})\varphi\,dx \geq 0
  \end{align*}
  and this proves that $w$ is a
  (non-negative) \emph{weak su\-per\-so\-lution} (in the sense of
  Definition \ref{def:weaksol}) of
  the following problem
  $$\begin{cases}
   \LL u = 0 & \text{in $\Omega$} \\
   u = 0 & \text{in $\R^n\setminus\Omega$}
  \end{cases}
  $$   
 \end{itemize} 
  Since $w\not\equiv 0$ in $\Omega$ (as $\lambda_1<\lambda_2$), by
  Theorem \ref{thm:WMPSMPHopfTogether} we conclude that
  \begin{equation} \label{eq:HopfandSMPv}
   \text{$w = u_2 - u_1 > 0$ pointwise in $\Omega$}\quad\text{and}\quad
   \text{$\de_{\nu}(u_2-u_1) < 0$ on $\de\Omega$}.
  \end{equation}
  With \eqref{eq:HopfandSMPv} at hand, we now define the function
  $$
  f(x,t) = \begin{cases}
  \lambda u_1(x)^p+u_1(x)^{2^*-1} & \text{if $t \leq u_1(x)$} \\
  \lambda t^p+t^{2^*-1} & \text{if $u_1(x)<t<u_2(x)$} \\
  \lambda u_2(x)^p+u_2(x)^{2^*-1} & \text{if $t\geq u_2(x)$}
  \end{cases}$$
  (for $x\in\Omega$ and $t\in\R$), and we consider the functional
  $$J(u) = \frac{1}{2}\rho_\e(u)^2-\int_\Omega F(x,u)\,dx,\qquad
  \text{with $F(x,t) = \int_0^t f(x,s)\,ds$}.$$
   Since $u_1,u_2\in C^1(\overline{\Omega})$,
   the function $f$ is \emph{globally bound\-ed} on $\Omega\times\R$;
  as a consequen\-ce of this fact, it is easy to check that the functional $J$
  achieves its (global) minimum at some $u\in\mathcal{X}^{1,2}(\Omega)$,
  which is a weak solution of 
  \begin{equation} \label{eq:PbDirminu}
  \begin{cases}
  \LL u = f & \text{in $\Omega$}, \\
  u = 0 & \text{on $\de\Omega$}
  \end{cases}
  \end{equation}
  Moreover, from Theorem \ref{thm:regulAntCozzi}
  we get $u\in C^{1}(\overline{\Omega})$.
  We then claim that
   \begin{equation} \label{eq:Claimuu1u2}
   \begin{split}
    \mathrm{i)}\,\,&\text{$u_1<u<u_2$ pointwise in $\Omega$}; \\
    \mathrm{ii)}\,\,&\text{$\de_{\nu}(u-u_1) < 0$ and $\de_{\nu}(u-u_2) > 0$ pointwise on $\de\Omega$}.
   \end{split}
   \end{equation}
 In fact, let $w_1 = u-u_1$. On the one hand, we have
  $w_1\in\mathcal{X}^{1,2}(\Omega)\cap C^{1}(\overline{\Omega})$
 (as the same is true of both $u$ and $u_1$);
 on the other hand, since 
 $u_1$ solves $\mathrm{(P)_{\lambda_1,\e}}$ and since $u$ solves
 \eqref{eq:PbDirminu}, by definition of $f$ (and by recalling \eqref{eq:HopfandSMPv}) we also have
 \begin{align*}
   \mathcal{B}_{\varepsilon}(w_1,\varphi) & = 
   \int_\Omega\big(f(x,u)- \lambda u_1^p-u_1^{2^*-1}\big)\varphi\,dx 
   \geq 0
 \end{align*}
 for every test function $\varphi\in C_0^\infty(\Omega),\,\text{$\varphi\geq 0$ on $\Omega$}$.
 Hence, we see that $w_1$ is a weak supersolution
 (in the sense of Definition \ref{def:weaksol}) 
 of the problem
 \begin{equation*}
  \begin{cases} 
  \LL u = 0 & \text{in $\Omega$}, \\
  u = 0 & \text{in $\R^n\setminus\Omega$}
  \end{cases}
 \end{equation*}
  Since $w_1\not\equiv 0$ in $\Omega$ (as
  $\lambda_1<\lambda)$, again by Theorem \ref{thm:WMPSMPHopfTogether} we then get
  $$\text{$w_1 = u-u_1 > 0$ pointwise in $\Omega$}\quad\text{and}\quad
   \text{$\de_{\nu}(u-u_1) < 0$ on $\de\Omega$}.$$
   In a totally analogous way one can prove that $w_2 = u_2-u > 0$ pointwise in $\Omega$ and that
   $\de_{\nu}w_2 < 0$ on $\de\Omega$, and this completes the proof of \eqref{eq:Claimuu1u2}.
   \vspace*{0.1cm}
   
   Now we have established \eqref{eq:Claimuu1u2}, we are finally
   ready to complete the demonstration of the theorem. Indeed, owing to \eqref{eq:Claimuu1u2},
    there exists $\varrho_0 > 0$ so small that
    $$\|v-u\|_{C^1(\overline{\Omega})} < \varrho_0\,\,\Longrightarrow\,\,\text{$u_1\leq v\leq u_2$ pointwise
    in $\Omega$};$$
    from this, taking into account that $J(v) = I_{\lambda,\e}(v)$ for every $v\in\mathcal{X}^{1,2}(\Omega)$
    satisfying $u_1\leq v\leq u_2$ in $\Omega$ (see the explicit
    definition of $f$), we derive that
    $$I_{\lambda,\e}(v) = J(v) \geq J(u) = I_{\lambda,\e}(u) \quad
    \text{for all $v\in C^1(\overline{\Omega})$ with $\|v-u\|_{C^1(\overline{\Omega})}<\varrho_0$},$$
    and this proves that $u$ is a local minimizer of $I_{\lambda,\e}$ in the $C^1$-topology.
    This, together with Theorem \ref{thm:H1vsC1}, ensures that 
    $u\in \mathcal{X}^{1,2}(\Omega)\cap C^{1}(\overline{\Omega})$ is a local minimizer of 
    the functional $I_{\lambda,\e}$ also in the $\mathcal{X}^{1,2}$-topology, as desired.
    \vspace*{0.1cm}
    
    \vspace*{0.1cm}
    
    We now turn to prove the two assertions 1)\,-\,2).
    \medskip
    
    \noindent-\,\,\emph{Proof of assertion} 1). It is a direct consequence of
    \eqref{eq:Claimuu1u2}\,-\,i)
    and of estimate
    \eqref{eq:minimalLinf} in Lem\-ma
    \ref{lem:existencemusharp}, taking into account that 
     $u_1 = \bar{u}_{\lambda_1,\e}$.
     \medskip
     
     \noindent-\,\,\emph{Proof of assertion} 2). 
     First of all we observe that, since $u_{\lambda,\e}$ is a weak solution of
     problem \eqref{eq:Problem}$_\lambda$ (hence, in particular,
     $u_{\lambda,\e} > 0$ a.e.\,in $\Omega$), this function is a 
     \emph{weak supersolution} of  problem \eqref{eq:SublinearProblem} (in
     the sense of Definition \ref{def:weaksol} with $f(t) = \lambda t^p$);
     thus, since $w_{\lambda,\e}$ solves
     \eqref{eq:SublinearProblem}, we can use \cite[Lemma 3.11]{AMT}
     (which can certainly be applied
     to the operator $\LL$, the role of $\e$ being immaterial), obtaining
     $$w_{\lambda,\e}\leq u_{\lambda,\e}\quad\text{a.e.\,in $\Omega$}.$$
     We explicitly mention that \cite[Lemma 3.11]{AMT} can be applied, since
     $f(t)/t = \lambda t^{p-1}$ is decreasing, and since
     $w_{\lambda,\e}\,\,u_{\lambda,\e}\in L^{2^*}(\Omega)$, see Remark 
     \ref{rem:spaceX12}\,-\,4).
     \medskip
     
     \noindent This ends the proof.
\end{proof}
Now we have established Theorem \ref{thm:ulambdaminimizer}, 
we are ready to embark on the demonstration of Theorem \ref{thm:main2}. To keep the notation
as simple as possible, throughout what follows we avoid to keep explicit track of the dependence on
$\e$ of the functional $I_{\lambda,\e}$ in \eqref{eq:funIlambdae}
and of the solution $u_{\lambda,\e}$ obtained in Theorem \ref{thm:ulambdaminimizer}. Thus, as in the
previous sections we simply write $I_\lambda$ and $u_\lambda$,
the dependence on $\e$ being understood.
\medskip

We begin with the following key lemma.
\begin{lemma}\label{lem:CrucialLemma}
Let $\lambda \in (0,\mu_{\sharp})$ be fixed \emph{(}with $\mu_\sharp>0$ as in \eqref{eq:existencemusharp}\emph{)}.
Then, there exist $\varepsilon_{0} \in (0,1)$, $R_{0}>0$ and a positive function $\Psi\in\mathcal{X}^{1,2}(\Omega)$ such that
\begin{equation} \label{eq:CrucialLemmaEq}
\left\{ \begin{array}{lr}
I_{\lambda}(u_{\lambda}+R\Psi) < I_{\lambda}(u_{\lambda}) & \textrm{for all  $\varepsilon \in (0,\varepsilon_{0})$ and $R\geq R_{0}$},\\
I_{\lambda}(u_{\lambda}+t R_{0}\Psi) < I_{\lambda}(u_{\lambda}) + \tfrac{1}{n} S_{n}^{n/2} & \textrm{for all  $\varepsilon \in (0,\varepsilon_{0})$ and $t \in [0,1]$}.
\end{array}\right.
\end{equation}
\begin{proof}
  First of all, we 
  choose a \emph{Lebesgue point} of $u_\lambda$ in $\Omega$, say $y$, 
  and we let $r > 0$ be such that $B_r(y)\Subset\Omega$;
  we then choose a cut-off function $\varphi\in C_0^\infty(\Omega)$ such that 
  \begin{itemize}
   \item[$(\ast)$] $0\leq\varphi\leq 1$ in $\Omega$;
   \item[$(\ast)$] $\varphi\equiv 1$ on $B_r(y)$;
  \end{itemize}
  and we consider the one-parameter family of functions
  \begin{equation}\label{eq:Scelta_Talenti}
  U_\e = V_\e\,\varphi,\qquad
  \text{where}\,\,V_\e(x) = \frac{\e^{\frac{\alpha(n-2)}{2}}}{(\e^{2\alpha}+|x-y|^2)^{\frac{n-2}{2}}},
  \end{equation}
  where $\alpha > 0$ will be appropriately chosen later on.
  Notice that this family $\{V_\e\}_\e$
  is the well-kno\-wn family of the \emph{Aubin-Ta\-len\-ti functions}, which are
  the unique (up to translation) extremals in the (local) Sobolev inequality; 
  this means that
  \begin{equation} \label{eq:Veminim}
   \frac{\||\nabla V_\e|\|^2_{L^2(\R^n)}}{\|V_\e\|^2_{L^{2^*}(\R^n)}} = S_n,
  \end{equation}
  where $S_n > 0$ is the \emph{best constant} in the Sobolev inequality.   
  
  \noindent We now recall that, owing to \cite[Lemma 1.1]{BN}, we have (as $\e \to 0^+$)
  \begin{equation} \label{eq:estimVeLocalBN}
   \begin{split}
    \mathrm{i)}&\,\,\| U_\e\|^2_{H^1_0(\Omega)} = 
    \e^{\alpha(n-2)}\Big(\frac{K_1}{\e^{\alpha(n-2)}}+O(1)\Big)
    = K_1+o(\e^{\frac{\alpha(n-2)}{2}}); \\[0.1cm]
    \mathrm{ii)}&\,\,\|U_\e\|^{2^*}_{L^{2^*}(\Omega)} = \e^{\alpha n}\Big(\frac{K_2}{\e^{\alpha n}}+O(1)\Big)
    = K_2+o(\e^{\alpha n/2});
   \end{split}
  \end{equation}
  where the constants $K_1,K_2 > 0$ are given, respectively, by
  $$K_1 = \||\nabla V_1|\|^2_{L^2(\R^n)},\qquad K_2 = \int_{\R^n}\frac{1}{(1+|x|^2)^n}\,dx$$
  and they satisfy 
  $K_1/K_2^{1-2/n} = S_n$, see the above \eqref{eq:Veminim}.
  On the other hand, by combining
   \eqref{eq:estimVeLocalBN}\,-\,ii) with
   \cite[Lemma 4.11]{BDVV5},
  we also have 
  \begin{equation} \label{eq:estimVeNonlocal}
  \begin{split}
   \e \, [U_\e]^2_s  & \leq \e \, \|U_\e\|^2_{L^{2^*}(\Omega)}\cdot O(\e^{\alpha(2-2s)}) = \|U_\e\|^2_{L^{2^*}(\Omega)}\cdot O(\e^{1+\alpha(2-2s)})
   \\
   & = \big(K_2+o(\e^{\alpha n/2})\big)^{2/2^*}\cdot O(\e^{1+\alpha(2-2s)}) = o(\e^{1+\alpha(1-s)}).
   \end{split}
  \end{equation}
 Defining
  $w = u_\lambda + tRU_\e$ (for $R\geq 1$ and $t\in[0,1]$),
  we then obtain the following estimate (as $\e\to 0^+$):
  \begin{equation*}
  \begin{split}
   I_{\lambda,\e}(w) & = I_{\lambda,\e}(u_\lambda) 
   + \frac{t^2R^2}{2}\rho_\e(U_\e)^2+tR\mathcal{B}_\rho(u_\lambda,U_\e)
   \\
   & \qquad
   - \frac{1}{2^*}\big(\|u_\lambda+tRU_\e\|^{2^*}_{L^{2^*}(\Omega)}-
    \|u_\lambda\|^{2^*}_{L^{2^*}(\Omega)}\big) \\
   &\qquad
   -\frac{\lambda}{p+1}\Big(\int_\Omega|u_\lambda+tRU_\e|^{p+1}\,dx
   - \int_\Omega|u_\lambda|^{p+1}\,dx\Big),
   \end{split}
   \end{equation*}
   \noindent and recalling that $u_\lambda$ solves \eqref{eq:Problem}$_{\lambda}$, we get
   \begin{equation*}
   \begin{split}
   I_{\lambda,\e}(w) & = I_{\lambda,\e}(u_\lambda)
   + \frac{t^2R^2}{2}\rho_\e(U_\e)^2
   +tR\int_\Omega(\lambda u_\lambda^{p}+u_\lambda^{2^*-1})U_\e\,d x
   \\
   & \qquad
   - \frac{1}{2^*}\big(\|u_\lambda+tRU_\e\|^{2^*}_{L^{2^*}(\Omega)}-
    \|u_\lambda\|^{2^*}_{L^{2^*}(\Omega)}\big) \\
   &\qquad
   -\frac{\lambda}{p+1}\Big(\int_\Omega|u_\lambda+tRU_\e|^{p+1}\,dx
   - \int_\Omega|u_\lambda|^{p+1}\,dx\Big) \\
   & = I_{\lambda,\e}(u_\lambda)
   + \frac{t^2R^2}{2}\rho_\e(U_\e)^2
   - \frac{t^{2^*}R^{2^*}}{2^*}\|U_\e\|^{2^*}_{L^{2^*}(\Omega)} \\
   & \qquad - t^{2^*-1}R^{2^*-1}\int_\Omega U_\e^{2^*-1}u_\lambda\,dx
   - \mathcal{R}_\e-\mathcal{D}_\e 
\end{split}
\end{equation*}   
      where we have introduced the notation
   \begin{align*}
    (\ast)&\,\,\mathcal{R}_\e = \frac{1}{2^*}\int_\Omega\Big\{|u_\lambda+tRU_\e|^{2^*}
    - u_\lambda^{2^*}-(tRU_\e)^{2^*}
    \\
    & \qquad\qquad\qquad 
     - 2^* u_\lambda (tRU_\e)\big(u_\lambda^{2^*-2}+ (tRU_\e)^{2^*-2}\big)\Big\}dx; \\
     (\ast)&\,\,\mathcal{D}_\e = 
     \frac{\lambda}{p+1}\int_\Omega
     \Big\{|u_\lambda+tRU_\e|^{p+1}- |u_\lambda|^{p+1}-
     tR(p+1)u_\lambda^{p}U_\e\Big\}dx.
   \end{align*}

   We start estimating $\mathcal{R}_{\e}$. To this aim, we follow \cite[Proof of Theorem 1]{BN89} (from equation (17) on) where the main difference is due to the fact that $u_{\lambda}$ is actually dependent on $\e$ as well. However, using the uniform upper bound of the $L^{\infty}$-norm of $u_{\lambda}$
   in Theorem \ref{thm:ulambdaminimizer}, one can get
      \begin{equation} \label{eq:termRe}
    |-\mathcal{R}_\e| \leq R^\beta o(\e^{\frac{\alpha(n-2)}{2}})\quad\text{as $\e\to 0^+$},
   \end{equation}
   for some $\beta\in(0,2^*)$. Regarding $\mathcal{D}_{\e}$, since the map
   $$[0,+\infty)\ni r\mapsto \gamma(r) = r^{p+1}$$
   is \emph{convex on $[0,+\infty)$}, for every $\e\in(0,1)$ we have
   \begin{equation} \label{eq:termDe}
   \begin{split}
    -\mathcal{D}_\e & = \frac{\lambda}{p+1}\int_\Omega
     \Big\{tR(p+1)u_\lambda^{p}U_\e+|u_\lambda|^{p+1}-|u_\lambda+tRU_\e|^{p+1} \\
     & = -\frac{\lambda}{p+1}\int_\Omega
     \Big\{\gamma(u_\lambda+tRU_\e)-\gamma(u_\lambda)
     - \gamma'(u_\lambda)(tRU_\e)
     \Big\}dx \leq 0.
     \end{split}
   \end{equation}
    Finally, combining once again
   \eqref{eq:lowerboundWMP} and Proposition \ref{prop:SublinearGeneral}\,-\,iii), and arguing as in
    the proof of \cite[Lemma 4.11]{BDVV5}, we get
   \begin{equation}\label{eq:termUeTarantello}
   \begin{aligned}
     \int_\Omega U_\e^{2^*-1}u_\lambda\,dx &\geq 
     \int_{B_{r}(y)}V_{\e}^{2^*-1}w_{\lambda,\e}\,dx \geq c_2 \, \int_{B_{r}(y)}V_{\e}^{2^*-1}\,dx \\
     &\geq
     \e^{\alpha(n-2)/2}D_0 +
     o(\e^{\alpha(n-2)/2})\quad\text{as $\e\to 0^+$},
     \end{aligned}
   \end{equation}
 where $D_0 > 0$ is a constant only depending on $n$.
  
  Gathering \eqref{eq:estimVeLocalBN}-\eqref{eq:estimVeNonlocal}, we obtain   
 \begin{equation}
   \begin{aligned}
   & I_{\lambda,\e} \leq I_{\lambda,\e}(u_\lambda)+\frac{t^2R^2}{2}K_1-
   \frac{t^{2^*}R^{2^*}}{2^*}K_2 \\
   &\qquad - t^{2^*-1}R^{2^*-1} D_0 \e^{\alpha(n-2)/2}
   - R^{\beta} o(\e^{\alpha(n-2)/2})+ C(tR+t^2R^2)\,o(\e^{\frac{\alpha(n-2)}{2}})
   \\
   &\qquad\qquad +\big(t^2R^2+t^{2^*}R^{2^*}\big)\big(o(\e^{1+\alpha(1-s)})+o(\e^{\alpha(n-2)/2})\big).
   \end{aligned}
\end{equation}  
   Thus, if we choose $\alpha\in(0,1]$ so small that  
   $1+\alpha(1-s)>\alpha(n-2)/2$
   (notice that this is always possible, since $n\geq 3$ and $s\in(0,1)$), we get
   \begin{equation} \label{eq:toconcludecomeinTar}
   \begin{split}
    I_{\lambda,\e}(u_\lambda+tRU_\e) & \leq I_{\lambda,\e}(u_\lambda)+\frac{t^2R^2}{2}K_1-
   \frac{t^{2^*}R^{2^*}}{2^*}K_2 \\
   &\qquad - t^{2^*-1}R^{2^*-1}D_0 \, \varepsilon^{\alpha(n-2)/2}\\   
   &\qquad\qquad +C \big(t^2R^2+t^{2^*}R^{2^*}\big)o(\e^{\alpha(n-2)/2}).
    \end{split}
   \end{equation}
   Now, thanks to estimate \eqref{eq:toconcludecomeinTar} we are finally
   ready to complete the proof of the lemma: in fact, starting from this estimate
   and repeating word by word the argument in \cite[Lemma 3.1]{Tarantello}, we find
   $\e_0 > 0$ and $R_0 > 0$ such that
   \begin{equation*}
    \begin{cases}
     I_\lambda(u_\lambda+RU_\e) < I_\lambda(u_\lambda) & \text{for all $\e\in(0,\e_0)$ and $R\geq R_0$}, \\
      I_\lambda(u_\lambda+tR_0 U_\e) < I_\lambda(u_\lambda)+\frac{1}{n}S_n^{n/2}
      & \text{for all $\e\in(0,\e_0)$ and $t\in[0,1]$}.
    \end{cases}
   \end{equation*}
   Thus, the lemma is proved by choosing $\Psi = U_\e$ (with $\e < \e_0$ and $y,a$ as above).
\end{proof}
\end{lemma}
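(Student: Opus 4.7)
The plan is to build a Tarantello-type competitor based on an Aubin--Talenti bubble concentrated at an interior Lebesgue point $y$ of $u_\lambda$: fix $\varphi\in C_0^\infty(\Omega)$ with $\varphi\equiv 1$ on some ball $B_r(y)\Subset\Omega$, let
\[
V_\e(x)=\frac{\e^{\alpha(n-2)/2}}{(\e^{2\alpha}+|x-y|^2)^{(n-2)/2}},\qquad \Psi=U_\e:=V_\e\varphi,
\]
and let the scaling parameter $\alpha>0$ be chosen at the end. I will expand $I_{\lambda}(u_\lambda+tRU_\e)$ around $u_\lambda$, use that $u_\lambda$ solves $(P_\lambda)$ to kill first order terms, and then carry out careful asymptotics as $\e\to 0^+$.

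First I would collect the standard Brezis--Nirenberg estimates $\||\nabla U_\e|\|_{L^2}^2=K_1+o(\e^{\alpha(n-2)/2})$ and $\|U_\e\|_{L^{2^*}}^{2^*}=K_2+o(\e^{\alpha n/2})$ with $K_1/K_2^{1-2/n}=S_n$. The decisive new ingredient is the nonlocal piece: using the estimate in \cite[Lemma 4.11]{BDVV5} I would show $\e[U_\e]_s^2=o(\e^{1+\alpha(1-s)})$, so it is genuinely lower order than the leading local terms provided $\alpha$ is chosen small enough that $1+\alpha(1-s)>\alpha(n-2)/2$ (always possible since $n\geq 3$, $s\in(0,1)$). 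For the supercritical remainder I would use the standard Brezis--Nirenberg algebraic inequality controlled by $\|u_\lambda\|_{L^\infty}$, which is \emph{uniform in $\e$} thanks to \eqref{eq:uminlocLinf} in Theorem \ref{thm:ulambdaminimizer}; for the sublinear remainder I would simply invoke convexity of $r\mapsto r^{p+1}$ to get a sign-favorable bound.

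The crucial step — and the main obstacle — is the Tarantello cross term $\int_\Omega u_\lambda U_\e^{2^*-1}dx$: this has to be bounded below by $D_0\,\e^{\alpha(n-2)/2}(1+o(1))$ for some positive constant \emph{independent of $\e$}, otherwise the energy gain cannot beat the second bubble term. Here I would use the comparison $u_\lambda\geq w_{\lambda,\e}$ from Theorem \ref{thm:ulambdaminimizer}\,-\,2) and then the uniform positivity of $w_{\lambda,\e}$ on $B_r(y)$ from Proposition \ref{prop:SublinearGeneral}\,-\,iii), which precisely requires $\e\in(0,\e_0)$: this is where the freedom to take $\e$ small is consumed, and it is the reason why the restriction $\e<\e_0$ appears in the statement.

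Putting everything together I obtain, for $0<\e<\e_0$, $R\geq 1$, $t\in[0,1]$,
\[
I_\lambda(u_\lambda+tRU_\e)\leq I_\lambda(u_\lambda)+\tfrac{t^2R^2}{2}K_1-\tfrac{t^{2^*}R^{2^*}}{2^*}K_2-t^{2^*-1}R^{2^*-1}D_0\,\e^{\alpha(n-2)/2}+o(\e^{\alpha(n-2)/2}),
\]
after absorbing all error terms of order strictly higher than $\e^{\alpha(n-2)/2}$. From this point the argument is the purely one-variable calculus computation in \cite[Lemma 3.1]{Tarantello}: maximizing the leading polynomial in $t$ at $t=t_*$ yields the local maximum value $\tfrac{1}{n}(K_1/K_2^{1-2/n})^{n/2}=\tfrac{1}{n}S_n^{n/2}$, and the negative Tarantello correction pushes this strictly below $\tfrac{1}{n}S_n^{n/2}$, giving the second inequality in \eqref{eq:CrucialLemmaEq}; taking $R\geq R_0$ large produces the first inequality since the $-t^{2^*}R^{2^*}$ term dominates. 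Both $\e_0$ and $R_0$ are fixed in this final step, and the lemma follows with $\Psi=U_\e$ for any fixed $\e\in(0,\e_0)$.
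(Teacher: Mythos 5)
Your proposal reproduces the paper's proof essentially step for step: same Aubin--Talenti bubble $U_\e=V_\e\varphi$ at a Lebesgue point $y$, same use of the Brezis--Nirenberg local asymptotics and the nonlocal estimate $\e[U_\e]_s^2=o(\e^{1+\alpha(1-s)})$ with $\alpha$ chosen so that $1+\alpha(1-s)>\alpha(n-2)/2$, same treatment of the critical remainder via the uniform $L^\infty$ bound on $u_\lambda$, same sign-favorable convexity argument for the sublinear remainder, same crucial lower bound $\int_\Omega U_\e^{2^*-1}u_\lambda\,dx\gtrsim\e^{\alpha(n-2)/2}$ obtained from $u_\lambda\geq w_{\lambda,\e}$ and Proposition \ref{prop:SublinearGeneral}\,-\,iii) (which is indeed where the restriction $\e<\e_0$ originates), and the same conclusion via the one-variable Tarantello calculus. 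The argument and all key ingredients coincide with the paper's.
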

Thanks to Lemma \ref{lem:CrucialLemma}, we can now proceed toward the proof
of Theorem \ref{thm:main2}: in fact, we turn to show that problem
\eqref{eq:Problem}$_\lambda$ possesses a second solution $v_\lambda\neq u_\lambda$, provided
that $0<\lambda<\lambda_*$ and $0<\e<\e_0$, \emph{where $\e_0 > 0$ is as in Lemma \ref{lem:CrucialLemma}}.

Let then $0<\e<\e_0$ be arbitrarily but fixed (with $\e_0 > 0$ as in Lemma
\ref{lem:CrucialLemma}), and let $0<\lambda <\lambda_*$. Since we know that $u_\lambda$
is a local minimizer of the functional $I_\lambda$,
there exists some $0<\varrho_0 = \varrho_0(\e) \leq \rho_\e(u_\lambda)$ such that
 \begin{equation} \label{eq:ulambdaminEkeland}
  I_\lambda(u)\geq I_\lambda(u_\lambda)\quad\text{for every $u\in\mathcal{X}^{1,2}(\Omega)$ with
 $\rho_\e(u-u_\lambda)<\varrho_0$}.
 \end{equation}
 As a consequence of \eqref{eq:ulambdaminEkeland}, if we consider the cone 
 \begin{equation*}
  T = \{u\in\mathcal{X}^{1,2}(\Omega):\,\text{$u\geq u_\lambda>0$ a.e.\,in $\Omega$}\},
 \end{equation*}
 only one of the following two cases hold:
 \begin{itemize}
  \item[\textsc{A)}] $\inf\{I_\lambda(u):\,\text{$u\in T$ and $\rho_\e(u-u_\lambda) = \varrho$}\}
  = I_\lambda(u_\lambda)$ for every $0<\varrho<\varrho_0$;
  \vspace*{0.05cm}
  \item[\textsc{B)}] there exists $\varrho_1\in(0,\varrho_0)$ such that 
  \begin{equation} \label{eq:defvarrho}
   \inf\{I_\lambda(u):\,\text{$u\in T$ and $\rho_\e(u-u_\lambda) = \varrho_1$}\} > I_\lambda(u_\lambda).
   \end{equation}
 \end{itemize}
 We explicitly notice that use of $\rho_\e(\cdot)$ to define an open neighborhood
 of $u_\lambda$ in \eqref{eq:ulambdaminEkeland} is motivated by the fact that
 the norm $\rho_\e(\cdot)$ is globally equivalent to $\rho(\cdot)$ (and to the 
 $H_0^1$-norm), uniformly w.r.t.\,$\e$, see \eqref{eq:equivalenceuniforme}.
 \medskip

 We then turn to consider the two cases A)\,-\,B) separately. In doing this,
 we will re\-pea\-tedly use the following elementary result of Real Analysis.
 \begin{lemma} \label{lem:Convergence}
  Let $A\subseteq\R^n$ be an arbitrary measurable set, and let $1\leq m< \infty$.
  Moreover, let $f\in L^m(A)$ and let $\{f_j\}_j\subseteq L^m(A)$ be such that
  $$\text{$f_j\to f$ in $L^m(A)$ as $j\to+\infty$}.$$
  Then, for every $0<\vartheta\leq m$ we have
  \begin{equation} \label{eq:LemmaConv}
   \lim_{j\to+\infty}\int_A |f_j|^\vartheta\varphi\,dx = \int_A |f|^\vartheta\varphi\,dx\quad
  \text{for every $\varphi\in L^{m'_\vartheta}(A)$},
\end{equation}   
where $m'_\vartheta = m/(m-\vartheta)$ 
\emph{(}with the convention $m'_\vartheta = \infty$ if $\vartheta = m$\emph{)}.
 \end{lemma}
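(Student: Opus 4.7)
The plan is to reduce the claim to an $L^{m/\vartheta}$-convergence statement for $|f_j|^\vartheta$ via H\"older's inequality, and then prove this convergence using the classical subsequence principle together with the dominated convergence theorem. Concretely, since $\vartheta/m + (m-\vartheta)/m = 1$, H\"older's inequality with exponents $m/\vartheta$ and $m'_\vartheta$ gives
\begin{equation*}
\left|\int_A\bigl(|f_j|^\vartheta-|f|^\vartheta\bigr)\varphi\,dx\right|
\leq \bigl\||f_j|^\vartheta-|f|^\vartheta\bigr\|_{L^{m/\vartheta}(A)}\,\|\varphi\|_{L^{m'_\vartheta}(A)},
\end{equation*}
so it suffices to prove that $|f_j|^\vartheta\to |f|^\vartheta$ in $L^{m/\vartheta}(A)$ as $j\to+\infty$.

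To establish this last convergence, I would use the standard subsequence argument. Assume by contradiction that the convergence fails; then there exist $\eta>0$ and a subsequence (still denoted by $\{f_j\}$) with $\||f_j|^\vartheta-|f|^\vartheta\|_{L^{m/\vartheta}(A)}\geq \eta$ for every $j$. Since $f_j\to f$ in $L^m(A)$, a classical result (see e.g.\,the proof of the Riesz-Fischer theorem) allows us to extract a further subsequence $\{f_{j_k}\}$ and a dominating function $g\in L^m(A)$ such that
\begin{equation*}
f_{j_k}\to f\,\,\text{a.e.\,in $A$}\quad\text{and}\quad |f_{j_k}|\leq g\,\,\text{a.e.\,in $A$, for every $k$}.
\end{equation*}

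From this, $|f_{j_k}|^\vartheta\to|f|^\vartheta$ pointwise a.e., and the pointwise bound $\bigl||f_{j_k}|^\vartheta-|f|^\vartheta\bigr|^{m/\vartheta}\leq C\,(g^m+|f|^m)$ holds for a suitable universal constant $C>0$, where $g^m+|f|^m\in L^1(A)$. The dominated convergence theorem then yields $|f_{j_k}|^\vartheta\to|f|^\vartheta$ in $L^{m/\vartheta}(A)$, contradicting the choice of the subsequence. Combining this with the H\"older estimate above gives \eqref{eq:LemmaConv}.

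The only mildly delicate point is the case $\vartheta=m$, where $m'_\vartheta=\infty$: the H\"older step still works with $\varphi\in L^\infty(A)$, and the rest of the argument is unchanged. Essentially no other obstacle arises; the lemma is a purely measure-theoretic statement whose proof is a routine application of the subsequence-plus-dominated-convergence technique, and the role of $\vartheta\in(0,m]$ is only to guarantee that the H\"older pair $(m/\vartheta,m'_\vartheta)$ is well-defined.
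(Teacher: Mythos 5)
Your proof is correct and takes essentially the same approach as the paper: both rely on the Urysohn subsequence principle combined with extraction of an a.e.-convergent subsequence dominated by some $h\in L^m(A)$, followed by the dominated convergence theorem; the only cosmetic difference is that you first establish $L^{m/\vartheta}$-convergence of $|f_j|^\vartheta$ and then invoke H\"older on the outer integral, whereas the paper bounds the integrand $|f_{j_k}|^\vartheta|\varphi|$ by $h^\vartheta|\varphi|\in L^1(A)$ and applies dominated convergence directly to the integrals in \eqref{eq:LemmaConv}.
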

 \begin{proof}
  We preliminary observe that, taking into account the very definition of $m'_\vartheta$,
 a direct application of H\"older's inequality shows that
 \begin{equation} \label{eq:uvL1prel}
  |u|^\vartheta |v|\in L^1(A)\quad\text{for every $u\in L^m(A)$ and every $v\in L^{m'_\vartheta}(A)$};
 \end{equation}
 hence, all the integrals in \eqref{eq:LemmaConv} are well-defined and finite. 
 \vspace*{0.05cm}
 
 With \eqref{eq:uvL1prel} at hand, we now turn to establish \eqref{eq:LemmaConv}.
 To this end, we arbitrarily choose a sub-sequence $\{f_{j_k}\}_k$ of
 $\{f_j\}_j$ and we prove that, by possibly choosing a fur\-ther sub-sequence, 
 identity \eqref{eq:LemmaConv} holds for $\{f_{j_k}\}_k$.
 
 Let then $\varphi\in L^{m'_\vartheta}(A)$ be fixed. Since, by assumption, $f_j\to f$ strongly in $L^m(A)$
 as $j\to+\infty$, we can find a function $h\in L^m(A)$ such that
 (up to a sub-sequence)
 \begin{itemize}
  \item[i)] $f_{j_k}\to f$ a.e.\,in $A$ as $k\to+\infty$;
  \vspace*{0.05cm}
  
  \item[ii)] $0\leq |f_{j_k}|\leq h$ a.e.\,in $A$, for every $k\in\N$;
 \end{itemize}
 thus, since $\vartheta > 0$, by assertion ii) we have the estimate
 \begin{equation} \label{eq:estimfjkh}
  \text{$0\leq |f_{j_k}|^\vartheta|\varphi|\leq h^\vartheta|\varphi|$ a.e.\,in $A$ and for every $k\in\N$}.
 \end{equation}
 Now, by \eqref{eq:uvL1prel} we know that $g =  h^\vartheta|\varphi|\in L^1(A)$; this,
 together with assertion i) and estimate \eqref{eq:estimfjkh}, allows us to apply the Lebesgue 
 Theorem, giving
 $$\lim_{k\to+\infty}\int_A |f_{j_k}|^\vartheta\varphi\,dx = \int_A |f|^\vartheta\varphi\,dx.$$
 This ends the proof.
 \end{proof}
 We are now ready to prove the following propositions.
 \begin{proposition} \label{prop:Lemma26Haitao}
  Assume that 
  \textsc{Case A)} holds. Then, for e\-ve\-ry $\varrho\in(0,\varrho_0)$ there exists
  a solution $v_\lambda$ of problem \eqref{eq:Problem}$_\lambda$ such that
  $$\text{$\rho_\e(u_\lambda-v_\lambda) = \varrho$}.$$
  In particular, $v_\lambda\not\equiv u_\lambda$.
 \end{proposition}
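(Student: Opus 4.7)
Fix $\varrho\in(0,\varrho_0)$ and consider the closed (hence complete) subsets of $\mathcal{X}^{1,2}(\Omega)$
\[
\Sigma_\varrho:=\{u\in T:\rho_\e(u-u_\lambda)=\varrho\}\subset K_\varrho:=\{u\in T:\rho_\e(u-u_\lambda)\leq\varrho\}.
\]
Case A) gives $\inf_{\Sigma_\varrho}I_\lambda=I_\lambda(u_\lambda)$, which, since $u_\lambda$ is a local minimizer of $I_\lambda$ (Theorem \ref{thm:ulambdaminimizer}), also coincides with $\inf_{K_\varrho}I_\lambda$. The strategy, in the spirit of \cite{ABC}, is to combine Ekeland's variational principle on $\Sigma_\varrho$ with the sub-threshold energy bound of Lemma \ref{lem:CrucialLemma} to produce a second weak solution.

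First, I would apply Ekeland's principle on $\Sigma_\varrho$ to obtain a minimizing sequence $\{u_n\}\subset\Sigma_\varrho$ with $I_\lambda(u_n)\to I_\lambda(u_\lambda)$ and
\[
I_\lambda(v)\geq I_\lambda(u_n)-\tfrac{1}{n}\,\rho_\e(v-u_n)\qquad\forall\,v\in\Sigma_\varrho.
\]
Boundedness of $\{u_n\}$ in $\mathcal{X}^{1,2}(\Omega)$ is immediate from $\rho_\e(u_n-u_\lambda)=\varrho$, so a sub\-sequence satisfies $u_n\rightharpoonup v_\lambda$ weakly in $\mathcal{X}^{1,2}(\Omega)$, pointwise a.e., and strongly in $L^q(\Omega)$ for every $1\leq q<2^*$; in particular $v_\lambda\geq u_\lambda$ a.e., so $v_\lambda\in T$.

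The crucial step is to upgrade weak convergence to strong convergence in $\mathcal{X}^{1,2}(\Omega)$. Since $I_\lambda(u_n)\to I_\lambda(u_\lambda)<I_\lambda(u_\lambda)+\tfrac{1}{n}S_n^{n/2}$ by Lemma \ref{lem:CrucialLemma}, a Br\'ezis--Nirenberg/Lions concentration-compactness analysis adapted to the mixed operator $\LL$ (in the spirit of \cite{BDVV5, BV2}) rules out Talenti-bubbling of the critical term and forces $u_n\to v_\lambda$ strongly in $\mathcal{X}^{1,2}(\Omega)$. Consequently $\rho_\e(v_\lambda-u_\lambda)=\varrho>0$, so $v_\lambda\not\equiv u_\lambda$, and $v_\lambda$ realizes the minimum of $I_\lambda$ over the whole closed ball $K_\varrho$ while lying on its boundary. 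Since $v_\lambda-u_\lambda\gneqq 0$ is a non-trivial non-negative weak supersolution of $\LL w=0$, Theorem \ref{thm:WMPSMPHopfTogether} gives $v_\lambda>u_\lambda$ pointwise in $\Omega$, i.e.\,$v_\lambda$ sits \emph{strictly inside} the cone $T$; consequently admissible perturbations from $v_\lambda$ are possible in every direction of $\mathcal{X}^{1,2}(\Omega)$. Because $v_\lambda$ is simultaneously a minimizer on the sphere $\Sigma_\varrho$ and on the ball $K_\varrho$, testing inward and outward radial variations kills the Lagrange multiplier associated to the sphere constraint, yielding $I'_\lambda(v_\lambda)=0$ in $\mathcal{X}^{1,2}(\Omega)^*$. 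Thus $v_\lambda$ is a weak solution of \eqref{eq:Problem}$_\lambda$, and positivity follows from $v_\lambda>u_\lambda>0$.

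\textbf{Main obstacle.} The delicate point is the strong $\mathcal{X}^{1,2}$-compactness of the critical minimizing sequence at the level $I_\lambda(u_\lambda)$: without the strict energy bound of Lemma \ref{lem:CrucialLemma}, one cannot rule out the loss of mass through bubbling typical of critical Sobolev problems, and it is precisely here that the restriction $\e<\e_0$ (hence the use of the full $\e$-dependent operator $\LL$ in place of $\mathcal{L}_1$) becomes indispensable.
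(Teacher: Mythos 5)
The overall strategy (Ekeland on a constrained set around $u_\lambda$, extract a weak limit, prove it is a solution at the right distance) is broadly aligned with the paper, but two of your key steps are problematic.

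\textbf{The energy-threshold invocation is a non sequitur.} You write that strong convergence follows from $I_\lambda(u_n)\to I_\lambda(u_\lambda)<I_\lambda(u_\lambda)+\tfrac{1}{n}S_n^{n/2}$ ``by Lemma \ref{lem:CrucialLemma}''. But that inequality is trivially true for any value of $I_\lambda(u_\lambda)$ and is not what Lemma \ref{lem:CrucialLemma} asserts; the lemma constructs a path for the mountain-pass geometry in Case B, not a compactness threshold for Case A. Moreover, the constrained minimizing sequence from Ekeland on the sphere is not automatically an (unconstrained) Palais--Smale sequence for $I_\lambda$, so a Br\'ezis--Nirenberg/Lions concentration-compactness argument does not immediately apply: you would still have to dispose of the Lagrange multiplier \emph{before} invoking the below-threshold compactness, and this is exactly the point you defer. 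The paper avoids this entirely. It applies Ekeland on a small \emph{annulus} $X=\{u\in T:\varrho-\delta\leq\rho_\e(u-u_\lambda)\leq\varrho+\delta\}$ rather than on the sphere, first proves $v_\lambda$ is a weak solution by passing to the limit (as $k\to+\infty$ and then $\nu\to 0^+$) in the Ekeland inequality tested against comparison functions of the form $w=v_k+\nu\varphi+\phi_{k,\nu}\in T$, and only afterwards proves strong convergence via an algebraic argument (Brezis--Lieb plus two energy comparisons, equations \eqref{eq:228Haitao}--\eqref{eq:233Haitao}) that exploits precisely the fact that in Case A the Ekeland sequence already sits at the level $I_\lambda(u_\lambda)=\inf_{K_\varrho}I_\lambda$. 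No Talenti bubble analysis is used and Lemma \ref{lem:CrucialLemma} plays no role in this proposition.

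\textbf{The Lagrange-multiplier removal is circular.} To show $v_\lambda$ lies strictly inside the cone $T$, you claim that $v_\lambda-u_\lambda$ is a non-trivial non-negative weak supersolution of $\LL w=0$ and apply Theorem \ref{thm:WMPSMPHopfTogether}. But establishing that $v_\lambda-u_\lambda$ is a supersolution of $\LL w=0$ uses the equation satisfied by $v_\lambda$ (one needs $\mathcal{B}_\e(v_\lambda,\varphi)\geq\int(\lambda u_\lambda^p+u_\lambda^{2^*-1})\varphi$), i.e.\,it presupposes exactly the conclusion you are trying to reach. At the point where you invoke Hopf's lemma, $v_\lambda$ is only known to minimize $I_\lambda$ over $K_\varrho$; you have no Euler--Lagrange information yet. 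The paper's argument does not need to know $v_\lambda>u_\lambda$ pointwise: the test-function decomposition with $\phi_{k,\nu}=(v_k+\nu\varphi-u_\lambda)_-$ is designed to produce the full variational inequality (for arbitrary $\varphi\in C_0^\infty(\Omega)$) directly from minimality within the cone, sidestepping the interiority issue entirely.
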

 \begin{proof}
  Let $0<\varrho<\varrho_0$ be arbitrarily fixed. Since we are assuming that \textsc{Case A)} holds,
  we can find a sequence $\{u_k\}_k\subseteq T$ satisfying the following properties:
  \begin{itemize}
   \item[a)] $\rho_\e(u_k-u_\lambda) = \varrho$ for every $k\geq 1$;
   \vspace*{0.05cm}
   
   \item[b)] $I_\lambda(u_k)\to I_\lambda(u_\lambda) =:\mathbf{c}_\lambda$ as $k\to+\infty$.
  \end{itemize}
  We then choose $\delta > 0$ so small that $\varrho-\delta > 0$ and $\varrho+\delta<\varrho_0$ and,
  accordingly, we consider the subset of $T$ defined as follows:
  $$X = \{u\in T:\,\varrho-\delta\leq\rho_\e(u-u_\lambda)\leq \varrho+\delta\}
  \subseteq T$$
  (note that $u_k\in X$ for every $k\geq 1$, see a)). Since it is \emph{closed}, this set $X$
  is a \emph{complete metric space} when endowed with the distance induced by $\rho$;
  moreover, since $I_\lambda$ is a \emph{real-valued and continuous functional} on $X$, and since
  $$\textstyle\inf_{X}I_\lambda = I_\lambda(u_\lambda)$$
  we are entitled to apply
  the Ekeland Variational Principle (see \cite{AubEke}) to the functional $I_\lambda$ on $X$,
  providing us with a sequence $\{v_k\}_k\subseteq X$ such that
  \begin{equation} \label{eq:EkelandCaseA}
  \begin{split}
    \mathrm{i)}&\,\,I_\lambda(v_k)\leq I_\lambda(u_k)\leq I_\lambda(u_\lambda)+1/k^2, \\
    \mathrm{ii)}&\,\,\rho_\e(v_k-u_k)\leq 1/k, \\
    \mathrm{iii)}&\,\,I_\lambda(v_k)\leq I_\lambda(u)+1/k\cdot
     \rho(v_k-u)\quad\text{for every $u\in X$}.    
    \end{split}
  \end{equation}
  We now observe that, since $\{v_k\}_k\subseteq X$ and since the set $X$ is \emph{bounded}
 in $\mathcal{X}^{1,2}(\Omega)$, 
  there exists  $v_\lambda\in\mathcal{X}^{1,2}(\Omega)$ such that (as $k\to+\infty$ and 
  up to a sub-sequence)
  \begin{equation} \label{eq:limitvkCaseA}
  \begin{split}
   \mathrm{i)}&\,\,\text{$v_k\to v_\lambda$ weakly in $\mathcal{X}^{1,2}(\Omega)$}; \\
   \mathrm{ii)}&\,\,\text{$v_k\to v_\lambda$ strongly in $L^m(\Omega)$ for every $1\leq m<2^*$}; \\
   \mathrm{iii)}&\,\,\text{$v_k\to v_\lambda$ pointwise
   a.e.\,in $\Omega$}.
   \end{split}
  \end{equation}
  where we have also used the \emph{compact embedding} 
  $\mathcal{X}^{1,2}(\Omega)\hookrightarrow L^2(\Omega)$. 
  To complete the proof, we then turn to prove the following two facts:
  \begin{itemize}
   \item[1)] $v_\lambda$ is a weak solution of \eqref{eq:Problem}$_\lambda$;
   \item[2)] $\rho_\e(v_\lambda-u_\lambda) = \varrho > 0$.
  \end{itemize}
  \vspace*{0.1cm}
  
  \noindent \emph{Proof of} 1). To begin with, we fix
  $w\in T$ and we choose $\nu_0 = \nu_0(w,\lambda) > 0$ so small that 
   $u_\nu = v_k+\nu(w-v_k) \in X$ for every $0<\nu<\nu_0$.
   We explicitly stress that the existence of such an $\nu_0$ easily follows
   from \eqref{eq:EkelandCaseA}-ii) and property a) above.
   \vspace*{0.05cm}
   
   On account of \eqref{eq:EkelandCaseA}-iii)
   (with $u = u_\nu$), we have
   $$\frac{I_\lambda(v_k+\nu(w-v_k))-I_\lambda(v_k)}{\nu}
   \geq -\frac{1}{k}\rho_\e(w-v_k);$$
   From this, by letting $\nu\to 0^+$ we obtain
   \begin{equation} \label{eq:221Haitao}
   \begin{split}
    -\frac{1}{k}\rho_\e(w-v_k) & \leq {\mathcal{B}_{\e}}(v_k,w-v_k)
    - \int_\Omega v_k^{2^*-1}(w-v_k)\,dx \\
    & \qquad-\lambda\int_\Omega v_k^{p}(w-v_k)\,dx,
   \end{split}
   \end{equation}
   Now, given any $\varphi\in C_0^\infty(\Omega)$ and any $\nu > 0$, we define
   \begin{itemize}
    \item[$(\ast)$] $\psi_{k,\nu} = v_k+\nu\varphi-u_\lambda$ and $\phi_{k,\nu} = (\psi_{k,\nu})_-$;
    \vspace*{0.05cm}
    
    \item[$(\ast)$] $\psi_\nu = v_\lambda+\nu\varphi-u_\lambda$ and $\phi_\nu = (\psi_\nu)_-$.
   \end{itemize}
   Since, obviously, $w = v_k+\nu\varphi+\phi_{k,\nu}\in T$,
   by exploiting \eqref{eq:221Haitao} we get
   \begin{equation} \label{eq:topasstothelimitCaseASol}
    \begin{split}
	-\frac{1}{k}\rho_\e(\nu\varphi+\phi_{k,\nu}) & \leq {\mathcal{B}_{\e}}(v_k,\nu\varphi+\phi_{k,\nu})
    - \int_\Omega v_k^{2^*-1}(\nu\varphi+\phi_{k,\nu})\,dx \\
    & \qquad-\lambda\int_\Omega v_k^{p}(\nu\varphi+\phi_{k,\nu})\,dx.
    \end{split}
   \end{equation}
   Then, we aim to pass to the limit as $k\to+\infty$ and $\nu\to 0^+$ in the above 
   \eqref{eq:topasstothelimitCaseASol}. To this end we first observe that, 
   on account of \eqref{eq:limitvkCaseA}-iii), we have
   \begin{equation} \label{eq:phiketophik}
    \text{$\phi_{k,\nu}\to\phi_\nu$ pointwise a.e.\,in $\Omega$ as $k\to+\infty$};
   \end{equation}
   moreover, by the very definition of $\phi_{k,\nu}$ we also have the following estimate
   $$|\phi_{k,\nu}| =  (u_\lambda-\nu\varphi-v_k)\cdot
    \mathbf{1}_{\{u_\lambda-\nu\varphi-v_k
    \geq 0\}}\leq u_\lambda+\nu|\varphi|;$$
    as a consequence, we get
   \begin{equation} \label{eq:perfareLebesgueCaseA}
   \begin{split}
     \mathrm{i)}&\,\,v_k^{2^*-1}|\phi_{k,\nu}|
    = v_k^{2^*-1}(u_\lambda-\nu\varphi-v_k)\cdot
    \mathbf{1}_{\{u_\lambda-\nu\varphi-v_k
    \geq 0\}}\leq (u_\lambda+\nu|\varphi|)^{2^*};\\
    \mathrm{ii)}&\,\,v_k^{p}|\phi_{k,\nu}|
    = v_k^{p}(u_\lambda-\nu\varphi-v_k)\cdot
    \mathbf{1}_{\{u_\lambda-\nu\varphi-v_k
    \geq 0\}}\leq (u_\lambda+\nu|\varphi|)^{p+1}.
    \end{split}
   \end{equation}
    Using a standard dominated-convergence
    argument based on \eqref{eq:phiketophik}\,-\,\eqref{eq:perfareLebesgueCaseA}, 
    jointly with Lemma \ref{lem:Convergence} (see \eqref{eq:limitvkCaseA}\,-\,ii)
    and remind that $\varphi\in C_0^\infty(\Omega)$), we then obtain
   \begin{equation} \label{eq:limitNONLIN}
   \begin{split}
    & \lim_{k\to+\infty}
    \Big(\int_\Omega v_k^{2^*-1}(\nu\varphi+\phi_{k,\nu})\,dx 
    +\lambda\int_\Omega v_k^{p}(\nu\varphi+\phi_{k,\nu})\,dx\Big) \\
    & \qquad\qquad
    = \int_\Omega v_\lambda^{2^*-1}(\nu\varphi+\phi_{\nu})\,dx 
    + \lambda\int_\Omega v_\lambda^{p}(\nu\varphi+\phi_{\nu})\,dx.
    \end{split}
   \end{equation}
   As regards the \emph{operator term} ${\mathcal{B}_{\e}}(v_k,\nu\varphi+\phi_{k,\nu})$
   by
   proceeding \emph{exactly} as in \cite[Proposition 5.2]{BV2} (where the same operator $\LL$
   is considered), we get
   \begin{equation} \label{eq:limitOPERATORPART}
    {\mathcal{B}_{\e}}(v_k,\nu\varphi+\phi_{k,\nu}) \leq 
    {\mathcal{B}_{\e}}(v_\lambda,\nu\varphi+\phi_{\nu})+
    o(1)\qquad\text{as $k\to+\infty$},
   \end{equation}
   where we have used the fact that $v_k\to v_\lambda$ weakly in $\mathcal{X}^{1,2}(\Omega)$.
   
   Gathering \eqref{eq:limitNONLIN} and \eqref{eq:limitOPERATORPART}, and taking into account
   that ${\rho_\e}(\phi_{k,\nu})$ is \emph{uniformly bo\-un\-ded} with respect to $k$ 
   (as the same is true of $v_k$), we can finally
   pass to the limit as $k\to+\infty$ in \eqref{eq:topasstothelimitCaseASol}, obtaining
   \begin{equation} \label{eq:afterlimitkCaseA}
    \begin{split}
     {\mathcal{B}_{\e}}(v_\lambda,\nu\varphi+\phi_{\nu})
     \geq \int_\Omega v_\lambda^{2^*-1}(\nu\varphi+\phi_{\nu})\,dx 
    + \lambda\int_\Omega v_\lambda^{p}(\nu\varphi+\phi_{\nu})\,dx.
    \end{split}
   \end{equation}
   With \eqref{eq:afterlimitkCaseA} at hand, we can now exploit
   once again the computations carried out in \cite[Proposition 5.2]{BV2}, getting
   \begin{equation} \label{eq:226Haitao}
    \begin{split}
    & {\mathcal{B}_{\e}}(v_\lambda,\varphi)-
     \lambda\int_\Omega v_\lambda^{p}\varphi\,dx 
     - \int_\Omega v_\lambda^{2^*-1}\varphi\,dx  \\
     & \qquad
     \geq-\frac{1}{\nu}\Big({\mathcal{B}_{\e}}(v_\lambda,\phi_\nu)
     - \lambda\int_\Omega v_\lambda^{p}\phi_\nu\,dx 
     - \int_\Omega v_\lambda^{2^*-1}\phi_\nu\,dx\Big) \\
     & \qquad (\text{since $u_\lambda$ is a solution of \eqref{eq:Problem}$_\lambda$}) \\
     & \qquad 
     = \frac{1}{\nu}\Big(-{\mathcal{B}_{\e}}(v_\lambda-u_\lambda,\phi_\nu)
     + \lambda\int_\Omega (v_\lambda^{p}-u_\lambda^{p})\phi_\nu\,dx \\
    &\qquad\qquad + \int_\Omega (v_\lambda^{2^*-1}-u_\lambda^{2^*-1})\phi_\nu\,dx\Big) \\
    & \qquad(\text{since $v_\lambda = \textstyle\lim_{k\to+\infty}v_k\geq u_\lambda$}) \\
    & \qquad \geq \frac{1}{\nu}\Big(
    -\int_{\{v_\lambda+\nu\varphi\leq u_\lambda\}}\nabla (v_\lambda-u_\lambda)\cdot
    \nabla (v_\lambda-u_\lambda+\nu\varphi)\,dx \\
    & \qquad\qquad-\e\iint_{\R^{2n}}\frac{((v_\lambda-u_\lambda)(x)-(v_\lambda-u_\lambda)(y))
    (\phi_\nu(x)-\phi_\nu(y))}{|x-y|^{n+2s}}\,dx\,dy\Big) \\
    &\qquad \geq \text{$o(1)$ as $\nu\to 0^+$}; \phantom{\iint_{\R^{2n}}}
    \end{split}
   \end{equation}
   as a consequence, by letting $\nu\to 0^+$ in \eqref{eq:226Haitao}, we obtain
   $${\mathcal{B}_{\e}}(v_\lambda,\varphi)-
     \lambda\int_\Omega v_\lambda^{p}\varphi\,dx 
     - \int_\Omega v_\lambda^{2^*-1}\varphi\,dx\geq 0.
     $$
     This, together with the \emph{arbitrariness} of the fixed $\varphi\in C_0^\infty(\Omega)$,
     finally proves that the function
     $v_\lambda$ is a weak solution of problem \eqref{eq:Problem}$_\lambda$
     as claimed. 
     
     In particular,
     from Theorem \ref{thm:regulAntCozzi} we derive that
     \begin{equation} \label{eq:vlambdabd}
      v_\lambda\in L^\infty(\Omega).
     \end{equation}
     
     \noindent \emph{Proof of} 2). To prove assertion 2) it suffices to show that
     \begin{equation} \label{eq:claimvkstrongconv}
      \text{$v_k\to v_\lambda$ strongly in $\mathcal{X}^{1,2}(\Omega)$ as $k\to+\infty$}.
     \end{equation}
     In fact, owing to
     property a) of $\{u_k\}_k$ we have
     $$\varrho-{\rho_\e}(u_k-v_k)\leq{\rho_\e}(v_k-u_\lambda) \leq {\rho_\e}(v_k-u_k)+\varrho;$$
     this, together with \eqref{eq:claimvkstrongconv} and 
     \eqref{eq:EkelandCaseA}-ii), ensures that ${\rho_\e}(u_\lambda-v_\lambda) = \varrho.$ Hence,
     we turn to to prove \eqref{eq:claimvkstrongconv}, namely
      the \emph{strong convergence} of $\{v_k\}_k$ to $v_\lambda$. 
     \vspace*{0.1cm}
     
     First of all, by \eqref{eq:limitvkCaseA} and the 
     Brezis\,-\,Lieb Lemma \cite{BrezisLieb},  we have
     \begin{equation} \label{eq:comeLemma21Haitao}
     \begin{split}
      \mathrm{i)}&\,\,
      \text{$\|v_k-v_\lambda\|_{L^{p+1}(\Omega)}\to 0$ as $k\to+\infty$}, \\
       \mathrm{ii)}&\,\,\|v_k\|^{2^*}_{L^{2^*}(\Omega)} = \|v_\lambda\|^{2^*}_{L^{2^*}(\Omega)} + 
        \|v_k - v_\lambda\|^{2^*}_{L^{2^*}(\Omega)} +o(1); \\
        \mathrm{iii)}&\,\,{\rho_\e}(v_k)^2 = {\rho_\e}(v_\lambda)^2 + {\rho_\e}(v_k -v_\lambda)^2 + o(1).
        \end{split}
     \end{equation}
     In particular, from \eqref{eq:comeLemma21Haitao}\,-\,i) we get
     \begin{equation} \label{eq:convIntegralsallap}
      \int_\Omega v_k^{p+1}\,dx = \int_\Omega v_\lambda^{p+1}\,dx+o(1)\quad\text{as $k\to+\infty$}.
     \end{equation}
     Owing to \eqref{eq:comeLemma21Haitao}, 
     and choosing $w = v_\lambda\in T$ in \eqref{eq:221Haitao}, we then get
     \begin{equation*}
   \begin{split}
    & {\rho_\e}(v_k-v_\lambda)^2 
   = -{\mathcal{B}_{\e}}(v_k,v_\lambda-v_k)+{\mathcal{B}_{\e}}(v_\lambda,v_\lambda-v_k) \\
   & \qquad\leq \frac{1}{k}{\rho_\e}(v_k-v_\lambda)
    + \lambda\int_\Omega v_k^{p}(v_k-v_\lambda)\,dx
    \\
    & \qquad\qquad + \int_\Omega v_k^{2^*-1}(v_k-v_\lambda)\,dx
    + {\mathcal{B}_{\e}}(v_\lambda,v_\lambda-v_k) \\
    & \qquad (\text{since $\{v_k\}_k$ is bounded and $v_k\to v_\lambda$ weakly in $\mathcal{X}^{1,2}(\Omega)$}) \\
    & \qquad= \lambda\int_\Omega v_k^{p+1}\,dx +
    \int_\Omega v_k^{2^*-1}(v_k-v_\lambda)\,dx - 
    \lambda\int_\Omega v_k^{p}v_\lambda\,dx + o(1) \\
    & \qquad
    = \lambda\int_\Omega v_\lambda^{p+1}\,dx +
        \|v_k - v_\lambda\|^{2^*}_{L^{2^*}(\Omega)}
        +\|v_\lambda\|^{2^*}_{L^{2^*}(\Omega)}-\int_\Omega v_k^{2^*-1}v_\lambda\,dx
    \\
    &\qquad\qquad - 
    \lambda\int_\Omega v_k^{p}v_\lambda\,dx + o(1) \quad\text{as $k\to+\infty$}.
   \end{split}
   \end{equation*}
   On the other hand, owing to 
   \eqref{eq:limitvkCaseA}\,-\,\eqref{eq:vlambdabd}
   (and reminding that $0<p<1$), we can exploit
   once again Lemma \ref{lem:Convergence}, thus deriving
   \begin{align*}
    \int_\Omega v_k^{p}v_\lambda\,dx \to \|v_\lambda\|^{p+1}_{L^{p+1}(\Omega)}
    \qquad\text{and}\qquad
    \int_\Omega v_k^{2^*-1}v_\lambda\,dx \to \|v_\lambda\|^{2^*}_{L^{2^*}(\Omega)}
   \end{align*}
   as $k\to+\infty$; as a consequence, we obtain
   \begin{equation} \label{eq:228Haitao}
     {\rho_\e}(v_k-v_\lambda)^2\leq \|v_k-v_\lambda\|^{2^*}_{L^{2^*}(\Omega)}+o(1)\quad\text{as $k\to+\infty$}.
   \end{equation}
   To proceed further, we now choose $w = 2v_k\in T$ in \eqref{eq:221Haitao}: this yields
   \begin{equation*}
    {\rho_\e}(v_k)^2-\|v_k\|^{2^*}_{L^{2^*}(\Omega)}-\lambda\int_\Omega v_k^{p+1}\,dx\geq -
    \frac{1}{k}{\rho_\e}(v_k)^2 = o(1);
   \end{equation*}
   thus, recalling that $v_\lambda$ is a weak solution of problem \eqref{eq:Problem}$_\lambda$,
   we get
   \begin{equation} \label{eq:231Haitao}
    \begin{split}
     {\rho_\e}(v_k-v_\lambda)^2 & =
     {\rho_\e}(v_k)^2-{\rho_\e}(v_\lambda)^2+o(1)  \\
     & \geq \Big(\|v_k\|^{2^*}_{L^{2^*}(\Omega)}+\lambda\int_\Omega v_k^{p+1}\,dx\Big)
     - {\mathcal{B}_{\e}}(v_\lambda,v_\lambda) \\
     & = \|v_k\|^{2^*}_{L^{2^*}(\Omega)}+\lambda\int_\Omega v_k^{p+1}\,dx
     - \|v_\lambda\|^{2^*}_{L^{2^*}(\Omega)}-\lambda\int_\Omega v_\lambda^{p+1}\,dx \\
     & = \|v_k-v_\lambda\|^{2^*}_{L^{2^*}(\Omega)}+o(1)\quad\text{as $k\to+\infty$},
    \end{split}
   \end{equation}
   where we have also used \eqref{eq:convIntegralsallap}. Gathering 
   \eqref{eq:228Haitao}-\eqref{eq:231Haitao}, we then obtain
   \begin{equation} \label{eq:232Haitao}
    {\rho_\e}(v_k-v_\lambda)^2 = \|v_k-v_\lambda\|^{2^*}_{L^{2^*}(\Omega)}+o(1)\quad\text{as $k\to+\infty$}.
   \end{equation}
   With \eqref{eq:232Haitao} at hand, we can finally end the proof
   of \eqref{eq:claimvkstrongconv}. In fact, assuming (to fix the ideas) that $I_\lambda(u_\lambda)
   \leq I_\lambda(v_\lambda)$, from \eqref{eq:EkelandCaseA} and 
   \eqref{eq:comeLemma21Haitao}\,-\,i) we get
   \begin{align*}
    I_\lambda(v_k-v_\lambda) & = \frac{1}{2}{\rho_\e}(v_k-v_\lambda)^2
    + \frac{\lambda}{p+1}\int_\Omega|v_k-v_\lambda|^{p+1}\,dx
    + \frac{1}{2^*}\|v_k-v_\lambda\|^{2^*}_{L^{2^*}(\Omega)} \\
    & = I(v_k)-I(v_\lambda)+o(1) \leq I(u_\lambda)-I(v_\lambda)+\frac{1}{k^2}+o(1) \\
    & \leq o(1)\qquad\text{as $k\to+\infty$};
   \end{align*}
   this, together with \eqref{eq:comeLemma21Haitao}\,-\,i), gives
   \begin{equation} \label{eq:233Haitao}
   \begin{split}
    & \frac{1}{2}{\rho_\e}(v_k-v_\lambda)^2
    -\frac{1}{2^*}\|v_k-v_\lambda\|^{2^*}_{L^{2^*}(\Omega)} \\
    & \qquad
     = I_\lambda(v_k-v_\lambda)+ \frac{\lambda}{p+1}\int_\Omega|v_k-v_\lambda|^{p+1}\,dx
     \leq o(1).
    \end{split}
   \end{equation}
   Thus, by combining \eqref{eq:232Haitao}-\eqref{eq:233Haitao}, we easily obtain
   $$\lim_{k\to+\infty}\|v_k-v_\lambda\|^{2^*}_{L^{2^*}(\Omega)} = \lim_{k\to+\infty}
   {\rho_\e}(v_k-v_\lambda)^2  =0,$$
   and this proves \eqref{eq:claimvkstrongconv}.
 \end{proof}
 \begin{proposition} \label{prop:Lemma27Haitao}
  Assume that 
  \textsc{Case B)} holds. Then, there exists
  a second solution $v_\lambda$ of problem \eqref{eq:Problem}$_\lambda$ such that
  $v_\lambda\not\equiv u_\lambda$.
 \end{proposition}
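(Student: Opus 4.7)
The plan is to exploit \textsc{Case B)} together with Lemma \ref{lem:CrucialLemma} to run a mountain-pass argument inside the cone $T$. I would first observe that Lemma \ref{lem:CrucialLemma} provides a positive function $\Psi\in\mathcal{X}^{1,2}(\Omega)$ and an $R_0>0$ such that $u_\lambda+R\Psi\in T$ for every $R\geq 0$ (since $\Psi\geq 0$ and $u_\lambda\in T$), and moreover $I_\lambda(u_\lambda+R\Psi)<I_\lambda(u_\lambda)$ for all $R\geq R_0$. By enlarging $R$ if necessary, I can also guarantee that $\rho_\e(R\Psi)>\varrho_1$, where $\varrho_1$ is the radius appearing in \eqref{eq:defvarrho}. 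Together with the strict separation \eqref{eq:defvarrho} provided by \textsc{Case B)}, this yields the mountain-pass geometry of $I_\lambda$ on $T$ relative to $u_\lambda$ and $u_\lambda+R\Psi$.

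I would then define the linear path $\gamma_0(t)=u_\lambda+tR\Psi$, which lies entirely in $T$, and introduce the mountain-pass level
$$c_*:=\inf_{\gamma\in\Gamma}\max_{t\in[0,1]}I_\lambda(\gamma(t)),\qquad
\Gamma:=\{\gamma\in C([0,1],T):\,\gamma(0)=u_\lambda,\,\gamma(1)=u_\lambda+R\Psi\}.$$
The lower bound coming from \eqref{eq:defvarrho} gives $c_*>I_\lambda(u_\lambda)$, while testing $\gamma_0$ and invoking the second inequality in \eqref{eq:CrucialLemmaEq} (applied at $R=R_0$, the case $R>R_0$ being handled by monotonicity of the path) gives the crucial strict upper bound
$$c_*<I_\lambda(u_\lambda)+\tfrac{1}{n}S_n^{n/2}.$$
Next, an application of the Ekeland variational principle on the \emph{closed convex} set $T$, in the same spirit as in the proof of Proposition~\ref{prop:Lemma26Haitao}, supplies a constrained Palais--Smale sequence $\{v_k\}\subseteq T$ with $I_\lambda(v_k)\to c_*$ and a variational inequality analogous to \eqref{eq:221Haitao}. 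Boundedness of $\{v_k\}$ in $\mathcal{X}^{1,2}(\Omega)$ follows as usual from the coercivity estimate with the sublinear term, so up to a subsequence $v_k\rightharpoonup v_\lambda$ weakly, strongly in $L^q(\Omega)$ for $q<2^*$, and a.e.\ in $\Omega$. Repeating verbatim the cut-off argument built on $\phi_{k,\nu}=(v_k+\nu\varphi-u_\lambda)_-$ carried out in the proof of Proposition~\ref{prop:Lemma26Haitao} then shows that $v_\lambda\in\mathcal{X}^{1,2}(\Omega)$ is a weak solution of \eqref{eq:Problem}$_\lambda$.

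The main obstacle, and the reason the strict upper bound on $c_*$ is needed, is to upgrade the weak convergence to strong convergence, thereby ensuring $I_\lambda(v_\lambda)=c_*>I_\lambda(u_\lambda)$ and so $v_\lambda\not\equiv u_\lambda$. Here I would argue by contradiction: by the Brezis--Lieb lemma applied separately to the local gradient part, the fractional Gagliardo seminorm and the critical $L^{2^*}$ norm, one gets
$$\rho_\e(v_k)^2=\rho_\e(v_\lambda)^2+\rho_\e(v_k-v_\lambda)^2+o(1),\qquad \|v_k\|_{L^{2^*}}^{2^*}=\|v_\lambda\|_{L^{2^*}}^{2^*}+\|v_k-v_\lambda\|_{L^{2^*}}^{2^*}+o(1),$$
while the strong $L^q$-convergence for $q<2^*$ kills the sublinear contribution. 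Combining these with the PS-type identities derived from the Ekeland inequality (exactly as in \eqref{eq:228Haitao}--\eqref{eq:232Haitao}) yields
$$\rho_\e(v_k-v_\lambda)^2=\|v_k-v_\lambda\|_{L^{2^*}(\Omega)}^{2^*}+o(1).$$
If $\ell:=\lim_k\rho_\e(v_k-v_\lambda)^2>0$, the Sobolev inequality $\|w\|_{L^{2^*}}^2\leq S_n^{-1}\rho_\e(w)^2$ forces $\ell\geq S_n^{n/2}$, and together with $I_\lambda(v_\lambda)\geq I_\lambda(u_\lambda)$ (which follows from $v_\lambda\in T$ and the local-minimizer property of $u_\lambda$) one obtains
$$c_*=I_\lambda(v_\lambda)+\tfrac{1}{n}\ell+o(1)\geq I_\lambda(u_\lambda)+\tfrac{1}{n}S_n^{n/2},$$
contradicting the strict upper bound coming from Lemma~\ref{lem:CrucialLemma}. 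Hence $\ell=0$, $v_k\to v_\lambda$ strongly in $\mathcal{X}^{1,2}(\Omega)$, $I_\lambda(v_\lambda)=c_*>I_\lambda(u_\lambda)$, and $v_\lambda$ is the desired second positive solution distinct from $u_\lambda$. The delicate point will be justifying the contradiction argument rigorously in the constrained cone setting (in particular, showing that $v_\lambda$ still lies in the interior of the PS geometry and that the Brezis--Lieb decomposition is compatible with the $\e$-dependent seminorm), but the mixed character of $\LL$ enters only through constants that remain uniformly controlled thanks to \eqref{eq:equivalenceuniforme}.
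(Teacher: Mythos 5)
Your proposal follows essentially the same route as the paper: \textsc{Case B)} and Lemma~\ref{lem:CrucialLemma} set up the mountain-pass geometry inside the cone $T$, give the strict bound $c_*<I_\lambda(u_\lambda)+\frac{1}{n}S_n^{n/2}$, and then an Ekeland argument together with a Brezis--Lieb decomposition and the Sobolev constant produces a second solution. There are, however, two places where your sketch skips over genuine difficulties.

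First, the production of a Palais--Smale sequence at the mountain-pass level is not the same as what happens in Proposition~\ref{prop:Lemma26Haitao}. There, Ekeland is applied directly to $I_\lambda$ on a subset $X\subseteq T$ because in \textsc{Case A)} the relevant level coincides with $\inf_X I_\lambda$, so Ekeland on the set immediately gives a minimizing/PS sequence. In \textsc{Case B)} the level $c_*$ is a \emph{min--max} value, and applying Ekeland to $I_\lambda$ on $T$ (or on a shell of $T$) would only give a minimizing sequence for $\inf_T I_\lambda$, which is not what you want. The correct move, which the paper makes, is to apply Ekeland to the functional $\Phi(\eta)=\max_{t\in[0,1]}I_\lambda(\eta(t))$ on the complete metric space of paths $\Gamma$ (under the sup distance), obtain a sequence of almost-optimal paths $\{\eta_k\}$, and then run the argument of \cite[Lemma 3.5]{BadTar} to extract points $v_k=\eta_k(t_k)\in T$ satisfying a variational inequality of the form \eqref{eq:237Haitao}. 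This extraction step is nontrivial and is the heart of the constrained mountain-pass construction; it cannot be subsumed under ``in the same spirit as Proposition~\ref{prop:Lemma26Haitao}.''

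Second, the inequality $I_\lambda(v_\lambda)\geq I_\lambda(u_\lambda)$ does \emph{not} follow from the local-minimizer property of $u_\lambda$ together with $v_\lambda\in T$: local minimality only controls $I_\lambda$ on a ball of radius $\varrho_0$ around $u_\lambda$, and there is no a priori bound $\rho_\e(v_\lambda-u_\lambda)<\varrho_0$. This can be repaired in a simple way: either observe that if $v_\lambda\neq u_\lambda$ you are already finished, and if $v_\lambda=u_\lambda$ then $I_\lambda(v_\lambda)=I_\lambda(u_\lambda)$ trivially and the same chain of estimates produces a contradiction; or compute $I_\lambda$ at the solutions and argue from there. (As an aside, the paper's own display \eqref{eq:239Haitao} seems to rely on the same unstated inequality, so this is a point worth making precise in both places.) A smaller slip: the control of $I_\lambda$ on the tail of the path $t\mapsto u_\lambda+tR\Psi$ for $tR>R_0$ does not come from ``monotonicity of the path'' but from the \emph{first} inequality in \eqref{eq:CrucialLemmaEq}, which gives $I_\lambda(u_\lambda+\tau\Psi)<I_\lambda(u_\lambda)$ for $\tau\geq R_0$, and hence a fortiori stays below $I_\lambda(u_\lambda)+\frac{1}{n}S_n^{n/2}$.
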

 \begin{proof}
  To begin with, we consider the set
  $$\Gamma = \big\{\eta\in C([0,1];T):\,\text{$\eta(0) = u_\lambda,\,I_\lambda(\eta(1))<I_\lambda(u_\lambda)$
  and ${\rho_\e}(\eta(1)-u_\lambda) > \varrho_1$}\big\},$$
  (where $\varrho_1 > 0$ is as in \eqref{eq:defvarrho}), and we claim that $\Gamma\neq\varnothing$.
  
  In fact, since the fixed $\e$ satisfies $0<\e<\e_0$ (with $\e_0 > 0$ as
  in Lemma \ref{lem:CrucialLemma}), 
  by the cited Lemma \ref{lem:CrucialLemma} we know that there exists $R_0 > 0$ such that
   \begin{equation} \label{eq:claimHaitao}
    \begin{cases}
     I_\lambda(u_\lambda+RU_\e) < I_\lambda(u_\lambda) & \text{for all $R\geq R_0$}, \\
      I_\lambda(u_\lambda+tR_0 U_\e) < I_\lambda(u_\lambda)+\frac{1}{n}S_n^{n/2}
      & \text{for all $t\in[0,1]$}.
    \end{cases}
   \end{equation}
   In particular, from \eqref{eq:claimHaitao} we easily see that
   $$\eta_0(t) = u_\lambda+tR_0U_\e\in \Gamma$$
   (by enlarging $R_0$ if needed),
   and thus $\Gamma\neq \varnothing$, as claimed.
   \vspace*{0.1cm}
   
   Now we have proved that $\Gamma\neq\varnothing$, we can proceed towards the end of the proof. 
   To this end we first observe that, since it is non-empty, this set $\Gamma$ is a
   \emph{complete metric space}, when endowed with the distance
   $$d_\Gamma(\eta_1,\eta_2) := \max_{0\leq t\leq 1}{\rho_\e}\big(\eta_1(t)-\eta_2(t)\big);$$
   moreover, since $I_\lambda$ is \emph{real-valued and continuous} on $\mathcal{X}^{1,2}(\Omega)$,
   it is easy to recognize that the functional $\Phi:\Gamma\to \R$ defined as
   $$\Phi(\eta) := \max_{0\leq t\leq 1}I_\lambda(\eta(t)),$$
   is (well-defined and) continuous on $\Gamma$. In view of these facts,
   we are then entitled to apply the Ekeland Variational Principle
   to this functional $\Phi$ on $\Gamma$: setting
   $$\gamma_0 := \inf{\Gamma}\Phi(\eta),$$
   there exists a sequence $\{\eta_k\}_k\subseteq\Gamma$ such that
    \begin{equation} \label{eq:EkelandCaseB}
  \begin{split}
    \mathrm{i)}&\,\,\Phi(\eta_k)\leq \gamma_0+1/k, \\
    \mathrm{ii)}&\,\,\Phi(\eta_k)\leq \Phi(\eta)+1/k\,d_\Gamma(\eta_k,\eta) \quad\text{for every $\eta\in\Gamma$}.    
    \end{split}
  \end{equation}
  Now, starting from
  \eqref{eq:EkelandCaseB} and proceeding exactly as in the proof of \cite[Lemma 3.5]{BadTar}, 
  we can find another sequence
  $$v_k = \eta_k(t_k)\in T$$
  (for some $t_k\in[0,1]$) such that
  \vspace*{0.1cm}
  
    a)\,\,$I_\lambda(v_k)\to\gamma_0$ as $k\to+\infty$;
    \vspace*{0.05cm}
    
    b)\,\,there exists some $C > 0$ such that, for every $w\in T$, one has
    \begin{equation} \label{eq:237Haitao}
    \begin{split}
     & {\mathcal{B}_{\e}}(v_k,w-v_k)
      - \lambda\int_\Omega v_k^{p}(w-v_k)\,dx \\
      &\qquad\qquad 
      -\int_\Omega v_k^{2^*-1}(w-v_k)\,dx \geq -\frac{C}{k}(1+{\rho_\e}(w)).
     \end{split}
    \end{equation}
    In particular, choosing $w = 2v_k$ in \eqref{eq:237Haitao}, we get
    \begin{equation} \label{eq:choice2vk}
     {\rho_\e}(v_k)^2-\lambda\int_\Omega v^{p+1}\,dx
     -\int_\Omega v_k^{2^*}\,dx \geq -\frac{C}{k}(1+2{\rho_\e}(v_k)).
    \end{equation}
    By combining \eqref{eq:choice2vk} 
    with assertion a), and exploiting H\"older's and Sobolev's i\-ne\-qua\-lities,
    we then obtain the following estimate
    \begin{equation} \label{eq:dadedurrevkbounded}
     \begin{split}
     \gamma_0+o(1) & = \frac{1}{2}{\rho_\e}(v_k)^2-
     \frac{\lambda}{p+1}\int_\Omega v_k^{p+1}\,dx
     - \frac{1}{2^*}\int_\Omega v_k^{2^*}\,dx
     \\
     & \geq \Big(\frac{1}{2}-\frac{1}{2^*}\Big){\rho_\e}(v_k)^2
     - \lambda\Big(\frac{1}{p+1}-\frac{1}{2^*}\Big)\int_\Omega v_k^{p+1}\,dx
     \\
     & \qquad -\frac{C}{2^*\,k}(1+2{\rho_\e}(v_k)) \\
     & \geq \Big(\frac{1}{2}-\frac{1}{2^*}\Big)
      {\rho_\e}(v_k)^2- C\big({\rho_\e}(v_k)^{p+1}-2{\rho_\e}(v_k)-1\big),
     \end{split}
    \end{equation}
    where $C > 0$ is a constant depending on $n$ and on $|\Omega|$. Since, obviously,
    $$c_0 = \frac{1}{2}-\frac{1}{2^*} > 0,$$
    it is readily seen from \eqref{eq:dadedurrevkbounded} that the sequence $\{v_k\}_k$
    is \emph{bounded in $\mathcal{X}^{1,2}(\Omega)$} (otherwise, by possibly choosing a sub-sequence
    we would have ${\rho_\e}(v_k)\to+\infty$, and hence
    the right-hand side of \eqref{eq:dadedurrevkbounded} would diverges as $k\to+\infty$, which
    is not possible). 
    
    In view of this fact, we can thus proceed as in the proof of Lemma \ref{prop:Lemma26Haitao} to
    show that $\{v_k\}_k$ weakly converges (up to a sub-sequence)
    to a \emph{weak solution} $v_\lambda\in\mathcal{X}^{1,2}(\Omega)$ of 
    problem \eqref{eq:Problem}$_\lambda$,
    further satisfying the identity
    \begin{equation} \label{eq:238Haitao}
     {\rho_\e}(v_k-v_\lambda)^2-\|v_k-v_\lambda\|^{2^*}_{L^{2^*}(\Omega)} = o(1)\quad
     \text{as $k\to+\infty$}.
    \end{equation}
    In view of these facts, to complete the proof we are left to show that
    $v_\lambda\not\equiv u_\lambda$. To this end we first observe that,
    given any $\eta\in\Gamma$, we have
    $$\text{${\rho_\e}(\eta(0)-u_\lambda) = 0$\quad and \quad ${\rho_\e}(\eta(1)-u_\lambda) > \varrho_1$},$$
    and hence there exists a point $t_\eta\in[0,1]$ such that ${\rho_\e}(\eta(t_\eta)-u_\lambda) = \varrho_1$;
    as a con\-se\-quence, since \emph{we are assuming that} \textsc{Case B)} holds, we obtain
    \begin{align*}
     \gamma_0 & = \inf_{\eta\in\Gamma}\Phi(\eta)
     \geq \inf\big\{I_\lambda(\eta(t_\eta)):\,\eta\in\Gamma\big\} \\
     & \geq \inf\{I_\lambda(u):
     \text{$u\in T$ and ${\rho_\e}(u-u_\lambda) = \varrho_1$}\} > I_\lambda(u_\lambda).
    \end{align*}
    On the other hand, since we already know that $\eta_0(t) = u_\lambda+tR_0U_\e\in \Gamma$,
    from \eqref{eq:claimHaitao} (and the very definition of $\gamma_0$) we derive the following
    estimate
    \begin{align*}
     \gamma_0 & \leq \Phi(\eta_0) = \max_{0\leq t\leq 1}I_\lambda(\eta_0(t)) <
     I_\lambda(u_\lambda)+\frac{1}{n}S_n^{n/2}.
    \end{align*}
    Summing up, we have
    \begin{equation} \label{eq:236Haitao}
     I_\lambda(u_\lambda) < \gamma_0 < I_\lambda(u_\lambda)+\frac{1}{n}S_n^{n/2}.
    \end{equation}
    Now, since the sequence $\{v_k\}_k$ weakly converges in $\mathcal{X}^{1,2}(\Omega)$
    to $v_\lambda$ as $k\to+\infty$,
    \emph{the same as\-ser\-tions} in \eqref{eq:comeLemma21Haitao}
    hold also in this context; this, together with \eqref{eq:236Haitao}
    and the above property a) of the sequence $\{v_k\}_k$, gives
    \begin{equation} \label{eq:239Haitao}
     \begin{split}
      & \frac{1}{2}{\rho_\e}(v_k-v_\lambda)^2
    -\frac{1}{2^*}\|v_k-v_\lambda\|^{2^*}_{L^{2^*}(\Omega)} \\
    & \qquad
     =\frac{1}{2}\big({\rho_\e}(v_k)^2-{\rho_\e}(v_\lambda)^2\big)
      -\frac{1}{2^*}\big(\|v_k\|^{2^*}_{L^{2^*}(\Omega)}
      - \|v\|^{2^*}_{L^{2^*}(\Omega)}\big)+o(1)
     \\[0.15cm]
     & \qquad 
     = I_\lambda(v_k)-I_\lambda(u_\lambda)+o(1) 
     = \gamma_0-I_\lambda(u_\lambda)+o(1) \\
     & \qquad < \frac{1}{n}S_n^{n/2}-\delta_0,
     \end{split}
    \end{equation}
    for some $\delta_0 > 0$ such that $1/n\,S_n^{n/2}-\delta_0 > 0$ (provided that $k$ is large enough).
    
   Gathering \eqref{eq:238Haitao}-\eqref{eq:239Haitao}, 
   and arguing as in \cite[Proposition 3.1]{Tarantello}, it is then easy to recognize that
    $v_k\to v_\lambda$ \emph{strongly in $\mathcal{X}^{1,2}(\Omega)$};
    as a consequence, by the continuity of the functional
    $I_\lambda$ and by \eqref{eq:237Haitao}-\eqref{eq:236Haitao}, we get
    $$I_\lambda(u_\lambda) < \gamma_0 =  \lim_{k\to+\infty}I_\lambda(v_k) = I_\lambda(v_\lambda),$$
    and this finally proves that $v_\lambda\not\equiv u_\lambda$, as desired.
    \end{proof}
\section{Proof of Theorem \ref{thm:MAIN3}} \label{sec:Last}
As anticipated in the Introduction, in this last section
we exploit Theorem \ref{thm:main} to prove Theorem \ref{thm:MAIN3}. The 
main reason why we
postponed this proof at the very end of the paper is \emph{philosophical}: since
Theorem \ref{thm:MAIN3} concerns the behavior of the $L^\infty$-norm of any solution
of problem \eqref{eq:Problem}$_\lambda$ \emph{different from its minimal solution} $\bar{u}_{\lambda,\e}$,
we need to know that there exists (at least) \emph{one bounded solution of
\eqref{eq:Problem}$_\lambda$ distinct from such $\bar{u}_{\lambda,\e}$}; this is precisely
the content of our Theo\-rem \ref{thm:main2} (at least for $\e \ll 1$), taking into account
Theorem \ref{thm:regulAntCozzi}.
\begin{proof}[Proof (of Theorem \ref{thm:MAIN3})]
We closely follow the approach in \cite[Theorem 2.4]{ABC}. 

Arguing by contradiction, 
we assume that there exist a sequence $\{\lambda_j\}_j\subseteq (0,\Lambda_\e)$
and a family $\{v_j = v_{\lambda_j,\e}\}_j$ of weak solutions
of problem \eqref{eq:Problem}$_{\lambda_j}$ such that
\begin{itemize}
 \item[a)] $\lambda_j\to 0$ as $j\to+\infty$;
 \item[b)] $v_j\not\equiv \bar{u}_{\lambda_j,\e}$ (where $\bar{u}_{\lambda_j,\e}$
 is the unique minimal solution of \eqref{eq:Problem}$_{\lambda_j}$), and 
 \begin{equation} \label{eq:Linfbound}
  \|v_j\|_{L^\infty(\Omega)}\leq \mathbf{c}.
 \end{equation}
 for some constant $\mathbf{c} > 0$ (possibly depending on $\e$). 
\end{itemize} 
In particular, we have
\begin{equation} \label{eq:regulvj}
\begin{split}
  \bullet)\,\,&\text{$v_j > 0$ pointwise in $\Omega$ and $u = 0$ pointwise in $\de\Omega$}; \\
  \bullet)\,\,&\text{$\de_\nu v_j< 0$ pointwise on $\de\Omega$}
\end{split}
\end{equation}
 We then observe that, since $v_j$ is a weak solution of
 problem \eqref{eq:Problem}$_{\lambda_j}$ different from its mi\-ni\-mal solution
  $\bar{u}_{\lambda_j,\e}$
  (that is, problem by \eqref{eq:Problem}$_{\lambda_j}$ possesses at least two
 distinct solutions), by
 Theorem \ref{thm:main} we necessarily have that
 \begin{equation} \label{eq:vjLinfgeq}
  \|v_j\|_{L^\infty(\Omega)}\geq M_\e,
 \end{equation}
 for some constant $M_\e > 0$ (independent of $j$, but possibly depending on $\e$).
 
 On the other hand, by combining \eqref{eq:Linfbound} with the
 global regularity result proved in \cite[Theorem 1.3]{SVWZ2}
 (see also the proof of Theorem \ref{thm:regulAntCozzi}), we derive that
 \begin{equation} \label{eq:C1alfaestimvj}
 \|v_j\|_{C^{1,\alpha}(\overline{\Omega})}\leq c\,\big(1+\|v_j\|^{2^*}_{L^\infty(\Omega)}\big)
 \leq c_1,
 \end{equation}
 and such an estimate holds for every fixed $\alpha\in (0,\min\{1,2-2s\})$, 
 with 
 a suitable co\-nstant
 $c > 0$ independent of $j$. 
 Now, owing to \eqref{eq:C1alfaestimvj}
 (and bearing in mind
 \eqref{eq:regulvj}), we easily derive that
 the sequence $\{v_j\}_j$ \emph{uniformly converges} (up to a sub-sequence)
 as $j\to+\infty$ to some \emph{non-negative function}
 $$v_0\in \mathcal{X}^{1,2}(\Omega)\cap C^1(\overline{\Omega}),$$ 
 which
 is a weak solution of the \emph{purely critical problem}
 $$(\star)\qquad\quad\begin{cases}
 \LL u = |u|^{2^*-1} & \text{in $\Omega$} \\
 u = 0 &\text{in $\R^n\setminus\Omega$}
 \end{cases}$$
 (recall that $\lambda_j\to 0$ as $j\to+\infty$); moreover, by 
 \eqref{eq:C1alfaestimvj} we get
 $$\|v_0\|_{L^\infty(\Omega)}\geq M_\e\,\,\Longrightarrow\,\,v_0\not\equiv 0.$$
 Summing up, the function $v_0$ is a \emph{non-trivial and non-negative weak solution
 of $(\star)$}
 (in the sense of Definition \ref{def:weaksol}), but this is in contradiction
 with \cite[Theorem 1.3]{BDVV5} (since, by assumption, $\Omega$ is star-shaped).
 This ends the proof.
\end{proof}


\begin{thebibliography}{99} 
\bibitem{ABC}
A. Ambrosetti, H. Brezis, G. Cerami,
{\em Combined effects of concave and convex nonlinearities in some elliptic problems},
J. Funct. Anal. {\bf 122}(2), (1994), 519--543.

\bibitem{AMT}
D. Amundsen, A. Moameni, R.Y. Temgoua,
{\em Mixed local and nonlocal supercritical Dirichlet},
Commun. Pure Appl. Anal. {\bf 22}(10), (2023), 3139--3164.
 
  
\bibitem{AC2}
 C.A. Antonini, M. Cozzi, 
 {\em Global gradient regularity and a Hopf lemma for quasilinear operators of mixed local-nonlocal type}, preprint.
  \url{https://arxiv.org/abs/2308.06075}
 
 \bibitem{ArRa}
R. Arora, V. Radulescu,
\emph{Combined effects in mixed local-nonlocal stationary problems}, to appear in Proc. Roy. Soc. Edinburgh Sect. A, (2023). \, doi:10.1017/prm.2023.80

\bibitem{AubEke}
J.-P. Aubin, I. Ekeland, 
{\em Applied nonlinear analysis}.
Pure Appl. Math. (N.Y.) Wiley-Intersci. Publ.
John Wiley \& Sons, Inc., New York, 1984. xi+518 pp.

\bibitem{BadTar}
M. Badiale, G. Tarantello, 
{\em Existence and multiplicity results for elliptic problems with critical growth and discontinuous nonlinearities}, 
Nonlinear Anal. Theory Methods Appl. {\bf 29}, (1997), 639--677. 

\bibitem{BESS}
B. Barrios, E. Colorado, R. Servadei, F. Soria, 
{\em A critical fractional equation with concave-convex
power nonlinearities}, 
Ann. Inst. H.Poincar\'{e} Anal. Non Lin\'{e}aire {\bf 32}(4), (2015), 875--900.

\bibitem{BDVV}
S. Biagi, S. Dipierro, E. Valdinoci, E. Vecchi,
{\em Mixed local and nonlocal elliptic operators: regularity and maximum principles}, 
Comm. Partial Differential Equations {\bf 47} (3) (2022), 585--629.


\bibitem{BDVV3}
S. Biagi, S. Dipierro, E. Valdinoci, E. Vecchi,
{\em A Faber-Krahn inequality for mixed local and nonlocal operators}, J. Anal. Math. {\bf 150}(2), (2023), 405--448.
 

\bibitem{BDVV5}
S. Biagi, S. Dipierro, E. Valdinoci, E. Vecchi,
{\em A Brezis-Nirenberg type result for mixed local and nonlocal operators}, preprint. \url{https://arxiv.org/abs/2209.07502}

\bibitem{BEMV}
S. Biagi, F. Esposito, L. Montoro, E. Vecchi,
{\em On mixed local-nonlocal problems with Hardy potential}, forthcoming.

\bibitem{BMV}
S. Biagi, D. Mugnai, E. Vecchi,
{\em A Brezis-Oswald approach for mixed local and nonlocal operators},
Commun. Contemp. Math. {\bf 26}(2), (2024), 2250057, 28 pp.


\bibitem{BV2}
S. Biagi, E. Vecchi,
{\em Multiplicity of positive solutions for mixed local-nonlocal singular critical problems},
preprint. \url{https://arxiv.org/abs/2308.09794}

\bibitem{BCP}
J.-M. Bony, P. Courr\`ege, P. Priouret, 
{\em Semi-groupes de Feller sur une vari\'et\'e \`a bord compacte et probl\`emes aux limites 
int\'egro-diff\'erentiels du second ordre donnant lieu au principe du maximum},
Ann. Inst. Fourier (Grenoble) {\bf 18}(2), (1968), 369--521.


\bibitem{BrezisLieb}
H. Brezis, E. Lieb, 
{\em A relations between pointwise convergence of functions and convergence of integrals}, 
Proc. Amer. Math. Soc. {\bf 88}, (1983), 486--490.

\bibitem{BN}
H. Brezis, L. Nirenberg, 
{\em Positive solutions of nonlinear elliptic equation involving the critical Sobolev exponent}, 
Comm. Pure Appl. Math. {\bf 36}, (1983), 437--477. 

\bibitem{BN89}
H. Brezis, L. Nirenberg, 
{\em A minimization problem with critical exponent and nonzero data}, 
in Symmetry in Nature (a volume in honor of L. Radicati), Scuola Normale Superiore Pisa, (1989), Volume I, 129--140.

\bibitem{BNH1C1}
H. Brezis, L. Nirenberg, 
{\em $H^1$ versus $C^1$ local minimizers}, 
C. R. Acad. Sci. Paris {\bf 317}, (1993), 465--472. 

\bibitem{BO}
H. Brezis, L. Oswald, 
{\em Remarks on sublinear elliptic equations},  
Nonlinear Anal. {\bf 10} (1986), 55--64.

\bibitem{CDV22}
X. Cabr\'e, S. Dipierro, E. Valdinoci, 
{\em The Bernstein Technique for Integro-Differential
Equations}, Arch. Rational Mech. Anal. {\bf 243}, (2022), 1597--1652.

\bibitem{Cancelier}
C. Cancelier, 
{\em Probl\`emes aux limites pseudo-diff\'erentiels donnant lieu au principe du maximum},
Comm. Partial Differential Equations {\bf 11}(15), (1986), 1677--1726.

\bibitem{CCP}
F. Charro, E. Colorado, I. Peral, 
{\em Multiplicity of solutions to uniformly elliptic fully nonlinear equations with concave-convex right-hand side}, 
J. Differential Equations {\bf 246}(11), (2009), 4221--4248. 
 
\bibitem{CKSV}
 Z.-Q. Chen, P. Kim, R. Song, Z. Vondra\v{c}ek, 
 {\em Boundary Harnack principle for $\Delta + \Delta^{\alpha/2}$}, 
 Trans. Amer. Math. Soc. {\bf 364}(8), (2012), 4169--4205. 

\bibitem{CoPe}
E. Colorado, I. Peral, 
{\em Semilinear elliptic problems with mixed Dirichlet-Neumann boundary conditions}, 
J. Funct. Anal. {\bf 199}(2), (2003), 468--507.

\bibitem{daSiFi}
J.V. da Silva, A. Fiscella, V.A.B. Viloria,
{\em Mixed local-nonlocal quasilinear problems with critical nonlinearities}, preprint.  \url{https://arxiv.org/abs/2308.07460}


\bibitem{daSiSa}
J.V. da Silva, A.M. Salort, 
{\em A limiting problem for local/non-local p-Laplacians with concave-convex nonlinearities}, 
Z. Angew. Math. Phys.{\bf 71}(6), (2020), Paper No. 191, 27 pp.

 \bibitem{DeFMin}
 C. De Filippis, G. Mingione, 
 {\em Gradient regularity in mixed local and nonlocal problems}, Math. Ann. (2022). \url{https://doi.org/10.1007/s00208-022-02512-7}.

\bibitem{DPLV2}
S. Dipierro, E. Proietti Lippi, E. Valdinoci, 
{\em (Non)local logistic equations with Neumann conditions}, Ann. Inst. H. Poincar\'e Anal. Non 
Lin\'eaire (2022), 
DOI: 10.4171/AIHPC/57 .

\bibitem{DV}
S. Dipierro, E. Valdinoci,
{\em Description of an ecological niche for a mixed local/nonlocal dispersal: an evolution equation and a new Neumann condition arising from the superposition of Brownian and L\'{e}vy processes},
Phys. A. {\bf 575}, (2021), 126052.

\bibitem{GarainKinnunen}
P. Garain, J. Kinnunen,
{\em On the regularity theory for mixed local and nonlocal quasilinear elliptic equations}, Trans. Amer. Math. Soc. {\bf 375}(8), (2022), 5393--5423.
 
\bibitem{GarainLindgren}
P. Garain, E. Lindgren, 
{\em Higher Hölder regularity for mixed local and nonlocal degenerate elliptic equations}, Calc. Var. {\bf 62}, 67, (2023).
 
\bibitem{AzMaPe}
J. Garc\'{i}a Azorero, I. Peral, J. Manfredi, 
{\em Sobolev versus H\"{o}lder local minimizers and global multiplicity for some quasilinear elliptic equations},
Commun. Contemp. Math. {\bf 2}(3), (2000), 385--404.

\bibitem{KRS}
D. Kumar, V. Radulescu, K. Sreenadh,
{\em Singular elliptic problems with unbalanced growth and critical exponent},
Nonlinearity {\bf 33}(7), (2020), 3336--3369.

\bibitem{LeoniFract}
G. Leoni,
\emph{A First Course in Fractional Sobolev Spaces}, 
Graduate Studies in Mathematics (2023).

\bibitem{Moameni}
A. Moameni, 
{\em Critical point theory on convex subsets with applications in differential equations and analysis}, 
J. Math. Pures Appl. {\bf 141}, (2020), 266--315.

\bibitem{MS}
T. Mukherjee, L. Sharma
{\em On nonlocal problems with mixed operators and Dirichlet-Neumann mixed boundary conditions}, preprint.
\url{https://arxiv.org/abs/2311.02567}


\bibitem{SVWZ} 
X. Su, E. Valdinoci, Y. Wei, J. Zhang,
{\em Regularity results for solutions of mixed local and nonlocal elliptic equations}, Math. Z. {\bf 302}, (2022), 1855--1878.

\bibitem{SVWZ2} 
X. Su, E. Valdinoci, Y. Wei, J. Zhang,
{\em On Some Regularity Properties of Mixed Local and Nonlocal Elliptic Equations}, preprint. \url{https://papers.ssrn.com/sol3/papers.cfm?abstract_id=4617397}

\bibitem{Tarantello}
G. Tarantello, 
{\em On nonhomogeneous elliptic equations involving critical Sobolev exponent}, 
Ann. Inst. H. Poincar\'{e} Anal. Non Lin\'{e}aire {\bf 9}, (1992), 281--304.
  
 \end{thebibliography}
\end{document}